\documentclass[11pt]{amsart}
\usepackage{pdfsync}
\usepackage[all]{xy}

\usepackage{amsmath,amssymb,amscd,amsfonts,verbatim}
\usepackage{paralist}
\usepackage[mathscr]{eucal}

\usepackage{enumerate}

\usepackage[pdftex]{hyperref}
\hypersetup{citecolor=blue,linktocpage}

\newtheorem{thm}{Theorem}[section]
\newtheorem{lem}[thm]{Lemma}

\newtheorem{prop}[thm]{Proposition}
\newtheorem{cor}[thm]{Corollary}

\newtheorem{assu-nota}[thm]{Assumption--Notation}

\newtheorem{remark}[thm]{Remark}

\newcommand{\h}{\widehat}

\newcommand{\C}{\mathbb C}

\newcommand{\N}{\mathbb N}
\newcommand{\pp}{\mathbb P}

\DeclareMathOperator{\Pic}{Pic}

\DeclareMathOperator{\Alb}{Alb}

\DeclareMathOperator{\IM}{Im}

\DeclareMathOperator{\Sym}{Sym}
\newcommand{\epsi}{\varepsilon}

\newcommand{\fie}{\varphi}

\newcommand{\OO}{\mathcal{O}}

\usepackage[colorinlistoftodos]{todonotes}

\newcommand\mmi[1]{\todo[inline,color=red!50]{#1}}
\newcommand\rp[1]{\todo[color=yellow]{#1}}

\usepackage{todonotes}

\numberwithin{equation}{section}

\title{Surfaces with canonical map of odd degree}

\author{ Margarida Mendes Lopes} 
\author{ Rita Pardini} 
\author{Roberto Pignatelli}

\begin{document}
\begin{abstract}  
Let $S$ be a smooth complex minimal surface of general type with  $p_g:=h^0(K_S)\ge 4$ whose canonical map is generically finite of odd degree $d>1$ onto a surface $\Sigma$.

We assume  that the general canonical curve of $S$ is smooth and that  $\Sigma$ is ruled by lines, and we prove:
 \begin{itemize}
 \item [--]   $p_g\le d+2$
 \item [--]  $\Sigma$ is a cone over the rational normal curve of degree $p_g-2$ in $\pp^{p_g-1}$
 \item  [--]  $p_g=d+2$ can occur only for $d=3,9,11$.
 \end{itemize}
  As a byproduct, we refine previous results by Beauville and Xiao by proving that if one drops the assumption that $\Sigma$ is ruled by lines  then  $d\le 5$ if $p_g\ge 112$.

The case $d=3$ being completely classified in \cite{triple}, we  focus on $d=5$,  showing that $p_g\le 5$ and that for $p_g=5$ the surface $S$ has a pencil $|C|$ with $C^2=1$ and $K_SC=5$. 

These results suggest that the answer to the question whether  the surfaces with canonical map of odd degree $d>1$ have bounded invariants could be positive, in sharp contrast with the case of even degree.
\medskip

\noindent{\em 2020 Mathematics Subject Classification:} 14J29 primary, 14H45 secondary.

\par
\medskip
\noindent{\em Keywords:} surfaces of general type, canonical map, fibered surfaces, Weierstra\ss\ points, theta characteristics.
 
 \end{abstract}

\maketitle
  
 \setcounter{tocdepth}{1}
 
\tableofcontents

 \section{Introduction}
 
 Let $S$ be a smooth minimal surface of general type with $p_g(S):=h^0(K_S)\ge 4$, let $\fie\colon S \to\Sigma \subset \pp^{p_g(S)-1}$ be the canonical map,  and assume that the canonical image $\Sigma$ is a surface. By results of Beauville \cite{beauville} and Xiao \cite{xiao-degree}, for $p_g(S)\gg 0$ the degree of $\fie$  is at most 8 and there are series of examples  with canonical map of degree 8 and unbounded  invariants. 
 However, as noted in \cite[\S~5]{survey}, only a few sporadic  examples where $\fie$ is not birational of odd degree are known, and it is an open question whether such surfaces have bounded invariants.  For degree 3 and $\Sigma$ ruled by lines there is a complete classification under the additional assumption that the general canonical curve of $S$ be smooth (\cite{triple}, cf. also \cite{starnone}): $p_g(S)\le 5$, $\Sigma$ is a cone over the rational normal curve of degree $p_g(S)-2$, the ruling of $\Sigma$ pulls back to a linear pencil $|C|$ on $S$, there are only few possibilities for the triple $(p_g(S),C^2, K_S\cdot C)$,  \underline{and all these possibilities actually occur}.
 
  Here we study the same situation for all  odd values $d>1$ of the degree of the canonical map, still under the assumption that the general canonical curve be smooth. The geometric picture that we obtain is very similar to the   one for $d=3$:
 \begin{itemize}
 \item $p_g(S)\le d+2$, 
 \item $\Sigma$ is a cone over the rational normal curve of degree $p_g(S)-2$
 \item the ruling of $\Sigma$ pulls back to a linear pencil $|C|$ on $S$ and there is a short list of possibilities for the triple $(p_g(S),C^2, K_S\cdot C)$. 
 \end{itemize}
 There is nevertheless a striking  difference:  not only we have not been able to find examples with $d>3$,  but we have ruled out the existence of most of the possible cases. In particular, for $d=5$ we have worked out explicitly the list of possibilities for the triple $(p_g(S),C^2, K_S\cdot C)$,  which is very similar to the one  for $d=3$. There are two ``main'' cases, $p_g=7$, $C^2=1$, $K_S\cdot C=5$ and $p_g=5$, $C^2=2$, $K_S\cdot C=6$, and one expects that  the remaining cases can be obtained as specializations, e.g. by imposing elliptic singularities. However, in contrast with the case $d=3$, we rule out the existence of both main cases and of most of the would-be specializations, so that the only remaining possibilities are $p_g=5$, $C^2=1$, $K_S\cdot C=5$ and $p_g=4$.    

This analysis is the core of the paper and we believe it to be of interest also from the point of view of the techniques: the proofs combine    arguments on the numerical connectedness of  curves on (possibly singular) surfaces, classification results for  surfaces of small degree in projective spaces, the structure of the gap set of Weierstra\ss\  points on curves of small genus and the fact that the parity of theta characteristics is preserved in families. Actually, to exclude  the case $p_g=5$, $C^2=2$, $K_S\cdot C$ (the hardest!) we make use of an infinitesimal version of the preservation of the parity of theta characteristics \cite{InfinitesimalParityOfTheta}. We refer the reader to \S \ref{sec: results} for a complete description of the results and an outline of the strategy of proof; we just mention here Corollary \ref{cor: d<7}, that implies that surfaces  with canonical map of odd degree $\ge 7$ and smooth general canonical curve have bounded degree, improving results by Beauville \cite{beauville} and Xiao \cite{xiao-degree}.
Summing up, the results contained in this paper seem to point to a positive answer to the question whether surfaces with non birational  canonical map of odd degree have bounded invariants.

Finally, a comment on the assumption, made throughout all the paper,  that the general canonical curve be smooth.
 On one hand it can be  regarded as a mild restriction, since it is ``almost always'' satisfied in concrete examples, but on the other hand it would be very desirable to remove it and have unconditional statements. Unfortunately, this assumption is essential at all crucial points of our proofs, hence at the moment we have no clue on how to dispense with it.

\subsection{Acknowledgements} Research partially supported by Funda\c c\~ao para a Ci\^encia e Tecnologia (FCT), Portugal, through CAMGSD, IST-ID, projects UID/4459/2025, UIDB/04459/2020 and  UIDP/04459/2020, by the European Union - Next Generation EU, Mission 4 Component 2 - CUP E53D23005400001 and by  PRIN 2022BTA242 ``Geometry of algebraic structures: Moduli, Invariants, Deformation'' of Italian MUR.

Part of this project was carried out while the authors were guests of the Research in Pairs program of CIRM-Trento.

The first named author is a member of Centro de An\'alise Matem\'atica, Geometria e Sistemas Din\^amicos.
 The second and third named authors are  members of GNSAGA of INDAM.

\subsection{Notation and conventions} 
We work over $\C$. Varieties embedded in a projective space are assumed to be non degenerate, unless otherwise stated.
A {\em surface} $X$ is a normal  complex projective surface; when $X$ is smooth or has canonical singularities we denote as usual by $K_X$ the canonical class and write $p_g(X):=h^0(K_X)$ (\!{\em geometric genus})  and $q(X):=h^1(\OO_X)$ (\!{\em irregularity}). 

A   {\it curve} $D$ on a smooth surface $S$ is  a non-zero effective divisor (in particular $D$ is Gorenstein).

Given a curve $D$, we denote  its dualizing sheaf  by $\omega_D$  and  its arithmetic genus $p_a(D)$,  where by definition $1-p_a(D)=h^0(D,\OO_D)-h^1(D,\OO_D)$.
 A curve  $D$ is {\em $m$-connected} if for every decomposition $D=A+B$ as the sum of two curves $A$ and $B$ one has $A B \geq m$.
 A {\em $(-2)$-curve} (or {\em nodal curve}) on a smooth surface $S$ is an irreducible curve that is isomorphic to $\pp^1$ and has self-intersection $(-2)$.

A pencil on a smooth surface $S$ is a rational map   with connected fibers $C\to B$, where $B$ is a smooth curve. The pencil is said to be {\em linear} if $B=\pp^1$, and {\em irrational of genus $g$} otherwise, where $g$ is the genus of $B$.

\section{Preliminaries}
\subsection{Surfaces ruled by lines}

 Let $\Sigma\subset \pp^n$ be a non-degenerate surface and let  $\rho\colon Y\to \Sigma$ be the minimal resolution of singularities. We say that $\Sigma$  is   {\em linearly normal } if the linear system $\rho^*|\OO_{\Sigma}(1)|$ is complete.
 
 We recall the following result from \cite{cdm}:
 \begin{thm}\label{thm: cdm}
Let $\Sigma\subset \pp^n$ be a  surface ruled by lines. If $\Sigma$ is linearly normal then:
\begin{enumerate}
\item if $q(\Sigma)=0$, then $\deg\Sigma=n-1$ and $\Sigma$ is either a cone or a rational normal scroll;
\item if $q(\Sigma)>0$, then $\Sigma$ is a scroll with $n-1+q(\Sigma)\le \deg\Sigma\le n-1+2q(\Sigma)$ and if $ \deg\Sigma= n-1+q(\Sigma)$ then $\Sigma$ is a cone. 
\end{enumerate}
\end{thm}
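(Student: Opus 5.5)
We reduce the statement to a degree computation on a ruled surface, using the resolution $\rho\colon Y\to\Sigma$ and the pencil of lines. Since $\Sigma$ is ruled by lines, $Y$ admits a ruling $\pi\colon Y\to B$ whose fibres $F$ map isomorphically onto the lines of $\Sigma$. Put $H:=\rho^*\OO_\Sigma(1)$, so that $HF=1$ and $q(\Sigma)=q(Y)=g(B)$. Linear normality says $h^0(Y,H)=n+1$. Consider $E:=\pi_*\OO_Y(H)$. It is a vector bundle on $B$, and $h^0(B,E)=n+1$.

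We first pass to a $\pp^1$-bundle. $Y$ is birational to $\pp(E)$ via the map given by restricting $H$ fibrewise. On the general fibre $H$ has degree $1$, so $E$ has rank $2$ after saturating; more precisely $|H|$ restricted to $F$ is the complete $g^1_1$. Replace $Y$ by the (relatively) minimal model $X=\pp_B(E)$ with $E$ of rank $2$. The moving part of $|H|$ becomes $|\OO_X(1)\otimes\pi^*L|$ minus base points. After twisting, assume $H\equiv C_0+bF$, where $C_0$ is a section. The base points are at most finitely many; if there are $k$ of them (with multiplicity) then $\deg\Sigma=H^2-k$. The case of a cone arises exactly when $|H|$ contracts a section, that is, when the image of $C_0$ is a point.

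Next we bound $h^0$ by Riemann--Roch and Clifford on $B$. We have $h^0(X,H)=h^0(B,E)$ with $\deg E=H^2=:e+2b$-type expression; write $\delta:=\deg E=H^2$. Since $E$ has rank $2$ and is globally generated off finitely many points, there is an exact sequence $0\to M\to E\to N\to 0$ with $M,N$ line bundles, where $N$ is the restriction of $H$ to a section. For $q=0$, $B=\pp^1$, $E=\OO(a)\oplus\OO(c)$ with $a,c\ge0$, and
\[
h^0(E)=a+c+2=\delta+2,
\]
so $n-1=\delta-\text{(base points)}\ge\deg\Sigma$. Combined with the general bound $\deg\Sigma\ge n-1$ for a nondegenerate surface, this gives equality, no base points, and $\Sigma$ equal to the image of $\mathbb F_{|a-c|}$ by $|C_0+bF|$. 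That image is a rational normal scroll, or a cone when $\min(a,c)=0$, which proves (1).

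For $q>0$ we apply Riemann--Roch and Clifford to $M$ and $N$:
\[
h^0(E)\le h^0(M)+h^0(N).
\]
If both degrees are at least $2q-1$, Riemann--Roch gives $n+1\le \delta+2-2q$ in the nonspecial case. The special cases are handled by Clifford, $h^0\le \deg/2+1$. These combine to give $\delta\ge n-1+q$, and in general $\delta\le n-1+2q$, since $h^0(L)\ge \deg L+1-q$ for each summand, using linear normality and the fact that $h^1$ controls the deficiency. Equality $\deg\Sigma=n-1+q$ forces one summand to be trivial-like ($h^0(M)=1$ with $\deg M=0$). Then the section corresponding to $M$ is contracted by $|H|$ to a point, and $\Sigma$ is a cone.

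The main obstacle is the rigorous bookkeeping of the base points and of the non-split extension. Bounding $h^0(E)$ by $h^0(M)+h^0(N)$ is fine. Pinning down $\deg\Sigma$, however, requires relating $H^2$ to $\deg\Sigma$ and ensuring that $\Sigma$ is a scroll (not a cone) except at equality. For this I would choose $M$ as the maximal-degree line subbundle, so that the extension is controlled by Segre's invariant. Since this is Theorem~\ref{thm: cdm} from \cite{cdm}, in the paper I would simply cite it.
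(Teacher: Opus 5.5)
You should know that the paper gives no proof of this statement. It is quoted from \cite{cdm}, and only part (1) is ever used, in Proposition~\ref{prop: max-case}, after it has been shown that the lines form a linear pencil. Your argument for (1) is essentially fine, and it can be tidied. On the minimal resolution $Y$, every fibre component other than the one mapping onto the line is $\rho$-exceptional, so it is not a $(-1)$-curve. A reducible fibre cannot have, as its only $(-1)$-curve, a component of multiplicity one. Hence $Y=\pp_B(E)$, and $H=\rho^*\OO_\Sigma(1)$ is base point free. Then $E$ is globally generated on $\pp^1$, $h^0(E)=\deg E+2$, and linear normality gives $\deg\Sigma=H^2=n-1$ with no base-point bookkeeping.

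Your argument for (2), however, breaks down at exactly the two steps that carry the content.
\begin{itemize}
\item Clifford does not yield $\delta\ge n-1+q$. If $M$ and $N$ are both special you only get $n+1\le \delta/2+2$, i.e.\ $\delta\ge 2n-2$, which is weaker as soon as $n\le q$. For a cone, $E=\OO_B\oplus N$, one has exactly $\deg\Sigma=n-1+q-h^1(N)$.
\item Nothing in your sketch forces $\deg M=0$ when $\deg\Sigma=n-1+q$.
\end{itemize}

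These are not just missing details: as transcribed here, (2) does not hold.
\begin{itemize}
\item Take the cone over a canonical curve of genus $g\ge 3$. Its minimal resolution is $\pp_B(\OO_B\oplus K_B)$, with $h^0=g+1$, so it is linearly normal in $\pp^g$. It has $q=g$ and degree $2g-2<n-1+q$. If instead you read $q(\Sigma)$ as $h^1(\OO_\Sigma)$, which is $0$ here, the example contradicts (1).
\item Let $B$ be a general curve of genus $4$ with its two $g^1_3$'s $L_1,L_2$, and set $E=L_1\oplus L_2$. Then $E$ is ample, and $(\phi_{L_1},\phi_{L_2})$ embeds $B$ in $\pp^1\times\pp^1$. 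So $\pp(E)\to\pp^3$ is finite and birational onto a surface $\Sigma$ whose rulings join $\phi_{L_1}(x)$ and $\phi_{L_2}(x)$ on two skew lines. Thus $\pp(E)$ is the minimal resolution, $h^0(E)=4$, and $\deg\Sigma=6=n-1+q$. Yet two general rulings are skew, so $\Sigma$ is not a cone.
\end{itemize}

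No refinement via Segre invariants can complete your outline. Part (2) needs hypotheses from \cite{cdm} that are not recorded here. Since the paper only uses (1), simply citing the result, as you suggest at the end, is exactly what the paper does.
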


We will also use the following result  obtained independently by  Xiao Gang and Miles Reid:

\begin{thm}{\cite[Lemma 1]{xiao-degree}, \cite{miles-small}}\label{thm: xiao-ruled}\
Let $Y\subset \pp^n$ be a non-degenerate surface such that $\deg Y < \frac 43(n-2)$.  

Then either $Y$ is ruled by lines or $n=9$ and $Y\subset \pp^9$ is the image of $\pp^2$ via the complete system $|\OO_{\pp^2}(3)|$.
\end{thm}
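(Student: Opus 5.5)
The plan is to run the classical argument of Xiao and Reid: project from points of $Y$ and use the degree bound to force lines. The first step is a reduction. If $Y$ is a cone with vertex $v$, it is ruled by the lines through $v$ and there is nothing to prove, so we may assume $Y$ is not a cone. We also may assume $Y$ is not a rational normal scroll or the Veronese surface, the surfaces of minimal degree $n-1$; indeed the Veronese surface in $\pp^5$ has degree $4 \ge \frac43\cdot 3$, so it is excluded by the hypothesis anyway.

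The core step is the projection. For a general point $p\in Y$, consider the projection $\pi_p\colon Y \dashrightarrow Y_1\subset \pp^{n-1}$ from $p$. Since $Y$ is not a cone, $\pi_p$ is generically finite. If $Y$ contains no line through the general point, the tangent-cone contribution gives $\deg Y_1 \cdot \deg \pi_p = \deg Y - 1$ (blowing up the smooth point $p$ with exceptional curve not contracted); otherwise lines through $p$ get contracted and we are done. Iterating, write $d_k=\deg Y_k\subset\pp^{n-k}$. As long as the projections stay birational we get $d_k = d-k$, while the minimal degree bound forces $d_k\ge n-k-1$. Together these give the constant excess $d-n+1$, which is harmless on its own. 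So the real input must instead come from points of high multiplicity or from comparing the excess $d-(n-1)<\frac{n-2}{3}$ with the drop obtained by projecting from a general tangent plane or from several general points at once.

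I would therefore use the tangent plane version. Project from the tangent plane $T_pY\cong\pp^2$. A general hyperplane section $C$ is a nondegenerate curve in $\pp^{n-1}$ of degree $d<\frac43(n-2)$. By Castelnuovo's bound this curve has small genus, $g \le$ roughly $d-(n-1)+\dots$, and in fact $d<2(n-1)$ gives $C$ a special linear series of low Clifford index. My preferred concrete route is Clifford/Castelnuovo on $C$: the hyperplane series $g^{n-1}_d$ with $d<\frac43(n-2)$ forces, via Castelnuovo's count ($m=\lfloor (d-1)/(n-2)\rfloor=1$), that $g\le d-n+1$. Then one of two things happens. Either the series is non-special and $C$ is of low genus, or $Y$ is a surface whose sections have genus $g<\frac{n-2}{3}+1$ with $d\ge$ about $3g$-type inequalities. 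Next comes the classification of surfaces of small sectional genus relative to degree (adjunction theory, Sommese–Van de Ven style): the adjoint system $|K_{\tilde Y}+H|$ on a resolution has $(K+H)\cdot H=2g-2<d$ roughly, and running adjunction gives that either $K+H$ is not nef or not big, so $\tilde Y$ is a scroll/conic bundle over a curve, or $\tilde Y=\pp^2$ with $H=\OO(3)$, or $\tilde Y$ is del Pezzo-like. The conic bundle and del Pezzo cases are ruled out by the numerics $d<\frac43(n-2)$ and $h^0(H)\ge n+1$ via Riemann–Roch, leaving scrolls (ruled by lines) and $(\pp^2,\OO(3))$, $n=9$, degree 9.

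The main obstacle I expect is the numerical elimination in the adjunction step. One must show that the conic bundle case ($H\cdot F=2$), and surfaces with $K+H$ big, violate $d<\frac43(n-2)$. I would do this by bounding $h^0(H)$ by $d$ and $g$ using Riemann–Roch and vanishing on the resolution. Concretely, I would show $n+1\le h^0(H)\le d-g+2$ for nonspecial sections, and also $2g-2\ge \frac{d}{?}$ from Hodge index on $K+H$. Checking that the constant $\frac43$ is exactly where $(\pp^2,\OO(3))$, with $d=9$, $n=9$, $9<\frac{28}{3}$, barely survives is the delicate part.
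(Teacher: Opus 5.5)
The paper gives no proof of this statement (it is quoted from Xiao and Reid), so your sketch has to stand on its own. As written it has a genuine gap exactly where the constant $\frac43$ must enter.

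Your Castelnuovo step is fine. The general hyperplane section is an irreducible nondegenerate curve of degree $d$ in $\pp^{n-1}$ with $m=1$, so its geometric genus satisfies $g\le d-n+1$. After that, everything is delegated to an unspecified adjunction-theoretic classification, and the decisive inequality is literally left as ``$2g-2\ge d/?$''. The tools you name also do not fit:
\begin{itemize}
\item Sommese--Van de Ven adjunction concerns a very ample (or ample and spanned) $H$. Here $H$ is only the nef, big, base point free pull-back of $\OO_Y(1)$ to a resolution of a possibly singular, non-normal $Y$.
\item Even for very ample $H$, failure of nefness of $K+H$ can come from $(-1)$-lines, and does not mean ``scroll''.
\item The estimate $n+1\le h^0(H)\le d-g+2$ only re-derives $g\le d-n+1$.
\item The Hodge index theorem applied to $K+H$ bounds $(K+H)^2$ from above, which is the wrong direction.
\end{itemize}
Your intermediate numerics ($g<\frac{n-2}{3}+1$, ``$d\ge$ about $3g$'') are also too weak: what the hypothesis really yields is $d>4g+4$. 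The paragraphs on projection from points and from tangent planes are never used.

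The missing idea is to use the \emph{lower} bound $(K+H)^2\ge 0$ given by nefness. Let $\tilde Y$ be the minimal resolution of the normalization of $Y$, and $H$ the pull-back of a hyperplane. From $g\le d-n+1$ and $3d<4n-8$ you get $d>4g+4$. Hence $K_{\tilde Y}H=2g-2-d<0$, so $\tilde Y$ is birationally ruled, and
\[
(K_{\tilde Y}+H)^2=K_{\tilde Y}^2+4g-4-d<K_{\tilde Y}^2-8 .
\]
If $K_{\tilde Y}+H$ is nef, this forces $K_{\tilde Y}^2\ge 9$. So $\tilde Y\cong\pp^2$ and $H=\OO_{\pp^2}(e)$ with $e\ge 3$. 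Then $3e^2<4n-8\le 2e^2+6e-8$ gives $e=3$, $n=9$ and the complete system $|\OO_{\pp^2}(3)|$. This is precisely where the threshold $\frac43(n-2)$ and the exception come from.

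If $K_{\tilde Y}+H$ is not nef, then, since $H$ is nef, the cone theorem gives a $K_{\tilde Y}$-negative extremal curve $\Gamma$ with $H\Gamma<-K_{\tilde Y}\Gamma$. There are three possibilities:
\begin{itemize}
\item A $(-1)$-curve would have $H\Gamma=0$, which is impossible on a minimal resolution.
\item A fibre of a $\pp^1$-bundle has $H\Gamma=1$, since $H\Gamma=0$ would give $H^2=0$. So its image is a line, and $Y$ is ruled by lines.
\item $\tilde Y=\pp^2$ with $e\le 2$ violates the degree bound.
\end{itemize}
No separate elimination of conic bundles or other del Pezzo surfaces is needed.
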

\subsection{Weierstra\ss\ points on Gorenstein curves}
Let $C$ be a Gorenstein curve with $h^0({\mathcal O}_C)=1$, and let $p$ be a smooth point of $C$. Let $g$ denote the arithmetic genus of $C$.

As for smooth curves, the Riemann-Roch Theorem and Serre Duality imply that $h^0(C,np)=n-g+1$ for $n\geq 2g-1$. Moreover $h^0(C,np)-h^0(C,(n-1)p)=0$ or $1$, so we can borrow the usual terminology and say that $n$ is a gap for $p$ if $h^0(C,np)=h^0(C,(n-1)p)$, and that $p$ is a Weierstra\ss\ point if its set of gaps is not $\{1,\ldots, g-1\}$. Observe that as in the smooth case the cardinality of the set of gaps is $g$.

\begin{lem}\label{lem: semigroup}
Let $C$ be a Gorenstein curve with $h^0({\mathcal O}_C)=1$, and let $p$ be a smooth point of $C$. Then the complement of the set of gaps of $p$ is a semigroup.
\end{lem}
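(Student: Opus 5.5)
The plan is to show that the set of non-gaps
\[
H:=\{n\ge 0 : h^0(C,np)>h^0(C,(n-1)p)\}
\]
is closed under addition. By convention $h^0(C,-p)=0$, so $0\in H$ because $h^0(\OO_C)=1$. Since $p$ is a smooth point, $p$ is a Cartier divisor, and $\OO_C(np)$ is an invertible sheaf for every $n\in\mathbb Z$. We also have the natural inclusions $\OO_C((n-1)p)\subset\OO_C(np)$.

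The first step is a local characterization of non-gaps. Fix a local parameter $t$ at $p$. A section $s\in H^0(C,\OO_C(np))$ is a rational function on $C$ that is regular away from $p$ and has a pole of order at most $n$ at $p$. The condition $n\in H$ says exactly that some such $s$ does not lie in $H^0(C,\OO_C((n-1)p))$. Since $\OO_C(np)$ and $\OO_C((n-1)p)$ differ only at $p$, this means $s$ has a pole of order exactly $n$ at $p$. Here we use that $s$ is regular on $C\setminus\{p\}$ and that $C$ is smooth at $p$, so the order of pole is well defined there via the DVR $\OO_{C,p}$.

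The second step is to multiply. Take $a,b\in H$ and let $s_a,s_b$ be rational functions, regular on $C\setminus\{p\}$, with poles of exact orders $a$ and $b$ at $p$. Then $s_as_b$ is regular on $C\setminus\{p\}$. The functions are regular on $C\setminus\{p\}$, so their product is too, because $\OO_C$ is a sheaf of rings; in particular nothing goes wrong at the singular points, which lie away from $p$. At $p$, since $\OO_{C,p}$ is a DVR, the valuation is additive, so $s_as_b$ has a pole of order exactly $a+b$. Hence $s_as_b\in H^0(C,\OO_C((a+b)p))\setminus H^0(C,\OO_C((a+b-1)p))$, and therefore $a+b\in H$.

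The only point needing care is the identification of $H^0(C,\OO_C(np))$ with rational functions regular off $p$. This is where one might worry about non-reduced or reducible $C$. I would handle it by working with the sheaves directly. Sections of $\OO_C(np)$ on $C\setminus\{p\}$ are sections of $\OO_C$, and near $p$ they are $t^{-n}\OO_{C,p}$. Multiplication gives a well-defined map $\OO_C(ap)\otimes\OO_C(bp)\to\OO_C((a+b)p)$, and the leading coefficient at $p$ is multiplicative in the DVR $\OO_{C,p}$. This avoids any appeal to the total ring of fractions and makes the argument valid for arbitrary Gorenstein $C$ with $h^0(\OO_C)=1$. In fact the Gorenstein hypothesis is not even needed for the semigroup property itself; it is only used for the gap count.
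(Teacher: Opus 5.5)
Your proof is correct. Its core is the same as the paper's: multiply sections realizing exact pole orders $a$ and $b$ at $p$, and observe that the product has exact pole order $a+b$.

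The difference lies in how you ensure the product behaves well on a possibly reducible or non-reduced $C$. The paper first passes to the irreducible component $C_p$ through $p$. It uses $h^0(\OO_C)=1$ to show that the restriction $H^0(C,np)\to H^0(C_p,np)$ is injective. Products of nonzero sections then stay nonzero, and the classical smooth-case argument applies on $C_p$.

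You work directly in the stalk at $p$ instead. A section of $\OO_C(np)$ not lying in $\OO_C((n-1)p)$ has germ $t^{-n}u$ with $u\in\OO_{C,p}^*$, and a product of two such germs has the same form. This shows at once that the product is nonzero and that it does not lie in $H^0(C,(a+b-1)p)$.

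Your route makes the restriction step unnecessary. It also shows, as you note, that closure under addition holds without assuming $h^0(\OO_C)=1$ or the Gorenstein property. The paper's route has the advantage of reducing literally to the familiar argument with rational functions on an irreducible curve.

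A minor correction: $h^0(C,-p)=0$ is a consequence of $h^0(\OO_C)=1$, not a convention. In any case $0$ is a non-gap, because the constant section $1$ does not vanish at $p$.
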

\begin{proof}
 Let  $C_p$ be the irreducible component of $C$ that contains  $p$. Observe that the restriction map $H^0(C,np) \rightarrow H^0(C_p,np)$ is injective: a nonzero section in the kernel would be a nonzero section of $\OO_C$ that vanishes on $C_p$, contradicting the assumption that $h^0({\mathcal O}_C)=1$. So if  $s_i\in H^0(C,n_ip)$, $i=1,2$, are nonzero sections then their restrictions on $C_p$ is nonzero; as $C_p$ is irreducible then $s_1s_2$ is also nonzero on $C_p$ and then on $C$ too. The statement follows from the same argument as in the case of smooth curves. 
\end{proof}

If $\omega_C$ is a multiple of $p$ we have the following easy lemma.

\begin{lem}\label{lemma: gaps come in pairs}
Let $C$ be a Gorenstein curve of genus $g\ge 2$ with $h^0({\mathcal O}_C)=1$, and let $p$ be a smooth point of $C$.
Assume $\omega_C \cong {\mathcal O}_C((2g-2)p)$.

Then $p$ is a Weierstra\ss\ point of $C$ with minimal gap $1$ and maximal gap $2g-1$.
The remaining $g-2$ gaps are one for each pair $\{2,2g-3\}$, $\{3,2g-4\}$, $\{4, 2g-5\}$, $\ldots$, $\{g-1,g\}$.
\end{lem}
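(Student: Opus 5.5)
Since $\omega_C\cong\OO_C((2g-2)p)$, Riemann--Roch together with Serre duality on the Gorenstein curve $C$ (as recalled above) will give, for every $n$,
\[
h^0(C,np)-h^0(C,(2g-2-n)p)=n-g+1 .
\]
This formula will drive the whole argument.

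First, $1$ is a gap. If it were not, we would have $h^0(C,p)=2$. Restricting to the component $C_p$, as in the proof of Lemma \ref{lem: semigroup}, this gives a nonconstant rational function on $C_p$ with a single simple pole at $p$. Then $C_p\cong\pp^1$, and the argument should show that $C$ behaves like a rational curve, contradicting $g\ge 2$. Concretely, the semigroup of non-gaps would contain $1$, hence all of $\N$, so there would be no gaps, while there must be $g\ge 2$ of them.

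Second, $2g-1$ is a gap. Taking $n=2g-1$ gives $h^0((2g-1)p)=g$. Taking $n=2g-2$ gives $h^0((2g-2)p)=g-1+h^0(\OO_C)=g$. So $h^0((2g-1)p)=h^0((2g-2)p)$, which means $2g-1$ is a gap. Since $h^0(np)=n-g+1$ for $n\ge 2g-1$, every $n\ge 2g$ is a non-gap, so $2g-1$ is the maximal gap. In particular $p$ is a Weierstra\ss\ point, since its gap set is not $\{1,\dots,g\}$. (The statement's ``$\{1,\ldots,g-1\}$'' should presumably read $\{1,\dots,g\}$, since there are $g$ gaps.)

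Third, the pairing. For $1\le n\le 2g-3$, the duality relation says that $n$ is a gap if and only if $2g-1-n$ is a non-gap. Subtract the relation at $n-1$ from the one at $n$:
\[
\bigl(h^0(np)-h^0((n-1)p)\bigr)+\bigl(h^0((2g-1-n)p)-h^0((2g-2-n)p)\bigr)=1 .
\]
Hence exactly one of $n$ and $2g-1-n$ is a gap. This is the standard symmetric-semigroup property, here obtained directly from duality. The pairs $\{n,2g-1-n\}$ for $2\le n\le g-1$ are exactly $\{2,2g-3\},\dots,\{g-1,g\}$, and each contains exactly one gap. Together with the gaps $1$ and $2g-1$ this accounts for $g$ gaps, consistent with the count.

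The main obstacle is the first step, showing that $1$ is a gap, for a possibly reducible Gorenstein curve. The safest route is the semigroup argument: if $1$ is a non-gap, then by Lemma \ref{lem: semigroup} every positive integer is a non-gap, so the gap set is empty. This contradicts the fact, noted above, that there are exactly $g\ge 2$ gaps. So no geometry is actually needed for this step.
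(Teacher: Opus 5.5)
Your proof is correct and follows the paper's argument. Riemann--Roch with Serre duality and $\omega_C\cong\OO_C((2g-2)p)$ gives exactly one gap in each pair $\{n,2g-1-n\}$ and shows that $2g-1$ is a gap; your semigroup argument via Lemma \ref{lem: semigroup} then gives a rigorous proof that $1$ is a gap, which the paper simply calls obvious. You are also right that the definition of Weierstra\ss\ point should read $\{1,\dots,g\}$.
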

\begin{proof}
Since $\omega_C={\mathcal O}_C((2g-2)p)$ then $2g-1$ is a gap, the maximal possible gap. 
The Riemann-Roch Theorem and Serre Duality imply that there is exactly one gap on each of the $g-1$ pairs $\{1,2g-2\}$, $\{2, 2g-3\}$, $\ldots$, $\{g-1,g\}$.
 In fact $1$ is obviously a gap, i.e. the gap of the pair  $\{1,2g-2\}$. 
\end{proof}

\subsection{Parity of theta characteristics}

 We recall  a  generalized version  due to Harris (\cite[Theorem 1.10.(i)]{Har}, see also \cite{Cor89})  of the classical fact (cf. \cite{Mum71}) that the parity of theta characteristic is constant in families. 

\begin{thm}{\cite[Theorem 1.10.(i)]{Har}}\label{thm: parity of theta}\
 Let $\Delta$ be an irreducible  variety,  let $\pi \colon X\rightarrow \Delta$ be a proper flat map with fibers $C_t:=\pi^{-1}(t)$ reduced curves,  let   $\mathcal L$ be  a line bundle  on $X$  and set $\mathcal L_t:={\mathcal L}|_{C_t}$.
 
 If  ${\mathcal L}_t^{\otimes 2}
\cong \omega_{C_t}$ for all  $t\in \Delta$,  then the function $t\mapsto h^0(C_t,{\mathcal L}_t)$ is constant modulo 2.
\end{thm}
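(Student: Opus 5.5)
The plan is to follow Mumford's method, which expresses $h^0(C_t,\mathcal L_t)$ as the dimension of the intersection of two Lagrangian subspaces of a family of nondegenerate quadratic spaces, and then to apply the classical fact that this dimension is constant modulo $2$. Since $\Delta$ is irreducible, hence connected, it suffices to prove that $t\mapsto h^0(C_t,\mathcal L_t)\bmod 2$ is locally constant. We may also replace $\Delta$ by its reduction and work in an analytic or étale neighbourhood of a given point $t_0$.

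\textbf{Construction.} Because the fibres are reduced, the smooth locus of $\pi$ is dense in every fibre. After shrinking around $t_0$, we choose a relative effective Cartier divisor $D\subset X$ that is étale over $\Delta$ and supported in the smooth locus of $\pi$; it is a union of disjoint local sections meeting every irreducible component of $C_{t_0}$. We take $\deg D_t=N$ large enough that $h^0(\mathcal L_t(-D_t))=0$, hence by duality $h^1(\mathcal L_t(D_t))=0$, for all $t$ near $t_0$. Note that $\deg\mathcal L_t=g-1$, so Riemann--Roch gives $h^0(\mathcal L_t(D_t))=N$.

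Define the rank-$2N$ vector bundle $V:=\pi_*\big(\mathcal L(D)/\mathcal L(-D)\big)$. On $V$ we put the quadratic form
\[
q(s)=\sum_{p\in D_t}\mathrm{Res}_p(s^2),
\]
which makes sense because $s^2$ is a local section of $\omega_{C_t}(2D_t)$ via $\mathcal L_t^{\otimes 2}\cong\omega_{C_t}$. A local computation at each point of $D_t$ shows that $q$ is nondegenerate: there the form is $\mathrm{Res}(z^{-2}\cdot z^{a}z^{b}dz)$. Inside $V$ we take two rank-$N$ subbundles:
\begin{itemize}
\item $W_1=\pi_*\mathcal L(D)$. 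It is locally free of rank $N$ by cohomology and base change, since $h^1$ vanishes, and it injects into $V$ because $h^0(\mathcal L(-D))=0$.
\item $W_2=\pi_*\big(\mathcal L/\mathcal L(-D)\big)$.
\end{itemize}
$W_2$ is isotropic because products of regular sections have no residues. $W_1$ is isotropic by the residue theorem on the reduced Gorenstein curve $C_t$. In that setting the sum of residues equals the coboundary $H^0(\omega(2D)/\omega)\to H^1(\omega_{C_t})\cong\C$, so it vanishes on the image of $H^0(\omega_{C_t}(2D_t))$. This needs $h^0(\OO_{C_t})=1$, which we may assume near $t_0$ by the choice of $D$ and semicontinuity; otherwise we argue componentwise on the connected components. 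Finally,
\[
(W_1)_t\cap (W_2)_t=H^0(C_t,\mathcal L_t).
\]

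\textbf{Linear algebra.} The last step is the standard lemma: if $W_1,W_2$ are Lagrangian subbundles of a nondegenerate quadratic bundle over a connected base, then $\dim (W_1\cap W_2)_t\bmod 2$ is constant. The proof is to trivialize locally and note that Lagrangians transverse to a fixed Lagrangian complement $U$ of $W_2$ are graphs of skew-symmetric maps $W_2\to U\cong W_2^\vee$. The intersection dimension is then the corank of a skew-symmetric matrix depending on $t$, and since skew forms have even rank, this corank is constant mod $2$ (it equals $N-\mathrm{rank}$). Alternatively, one can use that the orthogonal Grassmannian has two components, distinguished by this parity.

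\textbf{Main obstacle.} I expect the delicate part to be the construction step over a possibly singular base with singular fibres. The pieces that need care are: choosing $D$ étale in the smooth locus, getting base change for $W_1$ and $W_2$, and justifying the residue theorem and the nondegeneracy of $q$ for reduced Gorenstein (not smooth) curves. I would handle the residue theorem through duality as described above, and reduce the base-change statements to the uniform vanishing of $h^1(\mathcal L_t(D_t))$.
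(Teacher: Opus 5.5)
The paper gives no proof of this statement; it quotes Harris, whose argument is Mumford's quadratic-form method adapted to reduced Gorenstein fibres. Your route is therefore the standard one, and it works, but two steps need repair.

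\emph{Globalizing the quadratic form.} The hypothesis only gives isomorphisms $\mathcal L_t^{\otimes 2}\cong\omega_{C_t}$ fibre by fibre. Each is unique only up to a scalar on every connected component of $C_t$. Used as you use them, they put a form on each $V_t$ separately, not a quadratic form on the bundle $V$. Then ``locally constant'' has no content, since you cannot extend a Lagrangian $U\subset V_{t_0}$ to nearby fibres.

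You first need a relative isomorphism $\mathcal L^{\otimes 2}\cong\omega_{X/\Delta}$ after shrinking $\Delta$, and this follows by seesaw:
\begin{enumerate}
\item The fibres are Gorenstein, so $\omega_{X/\Delta}$ is invertible and commutes with base change. Hence $M:=\mathcal L^{\otimes 2}\otimes\omega_{X/\Delta}^{-1}$ is trivial on every fibre.
\item The fibres are reduced, so the Stein factorization of $\pi$ is \'etale. Thus \'etale-locally $X$ splits into families with connected fibres.
\item On each such family, $h^0(M_t)=1$ over the reduced base makes $\pi_*M$ a line bundle, and the evaluation map gives $\pi^*\pi_*M\cong M$.
\end{enumerate}
This is where connectedness matters, not in the residue theorem. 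On any reduced Gorenstein curve the trace of the coboundary is the sum of residues, connected or not. Also, $h^0(\OO_{C_t})$ has nothing to do with the choice of $D$.

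\emph{The linear algebra.} Your graph description assumes that $W_{1,t_0}$ is transverse to some Lagrangian complement $U$ of $W_{2,t_0}$. Your own computation shows that whenever such a $U$ exists, $\dim(W_{1,t_0}\cap W_{2,t_0})\equiv N \pmod 2$. So no such $U$ exists when $h^0(\mathcal L_{t_0})\not\equiv N$. For example, take $N=1$, $V=\C^2$ with form $xy$, and $W_1,W_2$ the two axes: the only Lagrangian complement of $W_2$ is $W_1$ itself.

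The fix is immediate. Add one more section to $D$ if necessary so that $N\equiv h^0(\mathcal L_{t_0})\pmod 2$. Then $W_{1,t_0}$ and $W_{2,t_0}$ lie in the same component of the orthogonal Grassmannian, so a common transverse Lagrangian $U$ exists. Extend $U$ to a Lagrangian subbundle near $t_0$; this uses the first repair, since quadratic bundles are locally standard. Transversality is open, so $h^0(\mathcal L_t)\equiv N$ for $t$ near $t_0$. Alternatively, your two-components argument is correct as stated, once $q$ is a genuine quadratic form on the bundle $V$.
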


We also need  the infinitesimal version of Theorem \ref{thm: parity of theta} from \cite[Corollary 1.2]{InfinitesimalParityOfTheta}
\begin{thm}\label{thm: infinitesimal}
In the assumptions of Theorem \ref{thm: parity of theta}, suppose in addition that $\Delta$ is a smooth curve.

Then
\begin{enumerate}
\item for all  $t\in \Delta$, for all $k \in {\mathbb N}$,  $h^0(kC_t,{\mathcal L}|_{kC_t})$ equals $kh^0(C_t,{\mathcal L}|_{C_t})$ modulo $2$;
\item  there is a coherent sheaf ${\mathcal T}$ on $\Delta$ such that the torsion subsheaf of $R^1\pi_*{\mathcal L}$ is isomorphic to ${\mathcal T} \oplus {\mathcal T}$.
\end{enumerate}
\end{thm}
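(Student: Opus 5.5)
The plan is to reduce both statements to linear algebra over a discrete valuation ring by presenting the cohomology of $\mathcal L$ by a \emph{skew-symmetric} matrix. Both assertions are local on $\Delta$, so fix $t\in\Delta$. Let $R=\OO_{\Delta,t}$, or its completion, which is a DVR with uniformizer $s$, and base change $\pi$ to $\operatorname{Spec} R$. The curve $kC_t$ is then the fibre over $R/s^k$.

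\textbf{Step 1: a symmetric presentation of cohomology.} I would follow Mumford's construction \cite{Mum71}. Choose a relatively ample effective Cartier divisor $D$ on $X$ that is flat over $R$ and supported in the smooth locus of the fibres; this is possible since the fibres are reduced. For $n\gg0$ replace $D$ by $nD$ and set
\[
W=\pi_*\big(\mathcal L(D)/\mathcal L(-D)\big),\qquad V_1=\pi_*\mathcal L(D),\qquad V_2=\pi_*\big(\mathcal L/\mathcal L(-D)\big).
\]
These are free $R$-modules, with $V_1\subset W$ by restriction and $V_2\subset W$ naturally, and their formation commutes with base change to $R/s^k$.

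Since $\mathcal L_t^{\otimes2}\cong\omega_{C_t}$ for every $t$, we get $\mathcal L^{\otimes2}\cong\omega_{X/\Delta}\otimes\pi^*M$ for a line bundle $M$ on $\Delta$, and $M$ is trivial over $R$. The sum of residues of products then gives a nondegenerate quadratic form on $W$ for which $V_1$ and $V_2$ are maximal isotropic. In particular $\operatorname{rk}V_1=\operatorname{rk}V_2=\tfrac12\operatorname{rk}W$.

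This yields, universally after any base change $R\to A$,
\[
H^0(\mathcal L_A)=V_{1,A}\cap V_{2,A},\qquad H^1(\mathcal L_A)=W_A/(V_{1,A}+V_{2,A}).
\]

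\textbf{Step 2: make the matrix skew.} Over the local ring $R$, choose a maximal isotropic complement $U$ of $V_2$ in $W$ that is also transverse to $V_1$ at the closed point. The form identifies $U\cong V_2^\vee$, and $V_1$ becomes the graph of a map $\varphi\colon U\to V_2$. Isotropy of $V_1$ forces $\varphi$ to be skew-symmetric; in the orthogonal setting over characteristic $0$ this means alternating. After any base change, $H^0=\ker\varphi_A$ and $H^1=\operatorname{coker}\varphi_A$.

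I expect this step to be the main obstacle. One must check that the residue pairing really is a nondegenerate split quadratic form on $W$ over $R$ (this uses Gorensteinness and reducedness of the fibres, and relative duality for $\pi$). One must also check that a common transverse isotropic complement exists over a local ring. I would try to lift from the closed fibre, using that the isotropic Grassmannian is smooth.

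\textbf{Step 3: normal form and conclusion.} An alternating $n\times n$ matrix over a DVR can be brought by congruence $\varphi\mapsto P^{T}\varphi P$ to a block sum of blocks $\begin{pmatrix}0&s^{a_i}\\-s^{a_i}&0\end{pmatrix}$ and a zero block of size $m$. Hence
\[
R^1\pi_*\mathcal L\cong\operatorname{coker}\varphi\cong R^m\oplus\bigoplus_i (R/s^{a_i})^{\oplus 2}.
\]
Its torsion is $\mathcal T\oplus\mathcal T$ with $\mathcal T=\bigoplus_i R/s^{a_i}$. Gluing these local descriptions over $\Delta$ (the torsion is supported at finitely many points) gives (2).

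For (1), reduce modulo $s^k$. Because $\varphi$ is square, $\dim_{\C} H^0(kC_t,\mathcal L|_{kC_t})$ equals the length of $\operatorname{coker}(\varphi\otimes R/s^k)$, which is
\[
km+2\sum_i\min(a_i,k).
\]
Taking $k=1$ gives $h^0(C_t,\mathcal L_t)=m+2\#\{i: a_i\ge1\}$. So both quantities are congruent to $km$, respectively $m$, modulo $2$, which proves $h^0(kC_t,\mathcal L|_{kC_t})\equiv k\,h^0(C_t,\mathcal L_t)\pmod 2$.
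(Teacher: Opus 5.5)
The paper gives no proof of this statement; it quotes it from \cite[Corollary 1.2]{InfinitesimalParityOfTheta}. Your strategy is the natural one: Mumford's presentation of the cohomology of $\mathcal L$ by two maximal isotropic summands $V_1,V_2$ of a quadratic module $W$, reduction to an alternating matrix over a DVR, its normal form, and the length count. Steps 1 and 3 are sound, but Step 2 fails as written.

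Write $V_i(t)=V_i\otimes_R\C$ and $r:=\tfrac12\operatorname{rk}W=\deg D$. A maximal isotropic $U$ transverse to both $V_1(t)$ and $V_2(t)$ exists only when these lie in the same connected component of the orthogonal Grassmannian. Equivalently, it exists only when $h^0(C_t,\mathcal L_t)=\dim(V_1(t)\cap V_2(t))\equiv r \pmod 2$. Your own Step 3 shows this: an alternating $r\times r$ matrix over $\C$ has even rank, so the presentation you claim would force $h^0(C_t,\mathcal L_t)\equiv \deg D$, and you never arranged that congruence. The smallest example is $W=\langle e,f\rangle$ hyperbolic with $V_1=\langle e\rangle$, $V_2=\langle f\rangle$: these are the only two Lagrangians, so no common complement exists. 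So the difficulty is not lifting from the closed fibre, as you anticipated, but existence on it.

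There is also a slip in direction. If $W=U\oplus V_2$ and $U\cap V_1=0$, then $V_1$ is the graph of a map $\varphi\colon V_2\to U\cong V_2^\vee$. The graph of a map $U\to V_2$, as you wrote, would be transverse to $V_2$ and would give $H^0=0$.

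The repair is cheap. If the parities disagree, replace $(W,V_1,V_2)$ by $(W\oplus Re\oplus Rf,\ V_1\oplus Re,\ V_2\oplus Rf)$, where $\langle e,f\rangle$ is a hyperbolic plane; alternatively, add one more section through the smooth locus to $D$. This changes neither $V_1\cap V_2$ nor $W/(V_1+V_2)$ after any base change, and it flips the parity of the half-rank.

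Once $h:=\dim(V_1(t)\cap V_2(t))\equiv r$, a common transverse Lagrangian exists over $\C$:
\begin{itemize}
\item Choose a hyperbolic basis with $V_2(t)=\langle e_1,\dots,e_r\rangle$ and $V_1(t)=\langle e_1,\dots,e_h,f_{h+1},\dots,f_r\rangle$.
\item Pair off the indices $h+1,\dots,r$, which is possible since $r-h$ is even.
\item Let $U_0$ be spanned by $f_1,\dots,f_h$ and by $e_i+f_j,\ e_j-f_i$ for each pair $(i,j)$. This is Lagrangian and transverse to both $V_1(t)$ and $V_2(t)$.
\end{itemize}

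$U_0$ lifts to $R$. Once one Lagrangian complement of $V_2$ is fixed, all of them form an affine space over the alternating maps, and these reduce surjectively mod $s$. Transversality to $V_1$ over $R$ then follows from the closed point by Nakayama. With these corrections your Step 3 goes through verbatim and yields both (1) and (2).
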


\section{Statement of the  results}\label{sec: results}

 The next theorem summarizes our main results.
 
 \begin{thm}\label{thm: 42} Let $S$ be a minimal surface of general type with $p_g(S)\ge 4$ such that the canonical map $\fie\colon S\to \pp^{p_g(S)-1}$ is generically finite. Assume that:
\begin{itemize}
\item[(a)]  the general canonical curve $D\in |K_S|$ is smooth;
\item[(b)]  the degree $d$ of $\fie$ is  odd;
\item[(c)] the canonical image $\Sigma$ is ruled by lines.
\end{itemize}
Then:
\begin{enumerate}
\item $p_g(S)\le d+2$ and $\Sigma$ is the cone over the rational normal curve of degree $p_g(S)-2$;
\item the strict transforms of the ruling of $\Sigma$ induce a linear pencil of non hyperelliptic curves $|C|$ with $C^2>0$ and $g(C)\ge \frac{d+3}{2}$; 
\item $\fie$ separates the curves of $|C|$;
\item if $p_g(S)=d+2$, then $d=3, 9,11$, the system $|K_S|$ is base point free, $C^2=1$ and $K_S=dC$;
\item if $d=5$, then $p_g(S)\le 5$; if $p_g(S)=5$ then  $C^2=1$, $K_SC=5$.
\end{enumerate} 
\end{thm}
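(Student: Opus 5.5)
The plan is to work on the smooth general canonical curve $D\in|K_S|$ and its image $H=\fie(D)$, a hyperplane section of $\Sigma$. Let $n:=p_g(S)$. Then $\fie|_D\colon D\to H\subset\pp^{n-2}$ is the map given by the restriction of $|K_S|$ to $D$. This is a subsystem of $|2K_D|$ because $K_S|_D=\frac12 K_D$ is a theta characteristic of $D$; more precisely $\mathcal O_D(K_S)$ satisfies $\mathcal O_D(K_S)^{\otimes 2}\cong\omega_D$. The degree $d$ of $\fie$ is odd, so $\deg\Sigma=K_S^2/d$.

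\textbf{Step 1: the shape of $\Sigma$ and the pencil $|C|$.} Pass to the linear normalization of $\Sigma$ and apply Theorem \ref{thm: cdm}. We must exclude $q(\Sigma)>0$ and the smooth rational normal scroll case. If $q(\Sigma)>0$ or $\Sigma$ is a smooth scroll, the ruling pulls back to a base point free pencil on $S$ (or to an irrational pencil). Then $K_S\equiv (n-2)F+Z$ up to fixed part, and restricting to $D$ gives a $g^1$ whose multiple sits inside the theta characteristic $K_S|_D$. Combining this with the odd degree $d$, numerical connectedness of $D$ and $F$, and the Hodge index theorem should produce a contradiction. The point of oddness is that $\fie$ restricted to a ruling fiber cannot factor through a double cover structure.

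For a cone, the lines through the vertex pull back to a linear pencil $|C|$, and $K_S=(n-2)C+Z$ with $Z$ effective. Since the vertex has a nonzero preimage, $C^2>0$. The general $C$ maps with degree $d'$ onto a line. From $K_S C=(n-2)C^2+ZC$ and the requirement that $\fie$ has degree $d$, I would show that $\fie|_C$ is birational onto its image, so $\fie$ separates the curves of $|C|$. The image of $C$ is then the line itself, so $\fie|_C$ is a $g^1_d$ on $C$ given by $|K_S|_C|$, cut by hyperplanes through the line. Non-hyperellipticity of $C$ comes from $d$ odd: otherwise the $g^1_d$ would be composed with the $g^1_2$ or would contradict Castelnuovo–Severi. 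The bound $g(C)\ge\frac{d+3}{2}$ then follows from a Riemann–Roch count on the $g^1_d$ together with $h^0(C,K_S|_C)\ge 2$.

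\textbf{Step 2: $n\le d+2$ and the extremal case.} We have $K_S C\ge (n-2)C^2\ge n-2$, and $K_S C$ equals $d$ plus contributions from base points, since $\fie|_C$ maps onto a line with degree $d$. This gives $n-2\le d$. Equality forces $C^2=1$, $Z C=0$, no base points, and hence $K_S=dC$ by Hodge index. In that case $K_S|_C=\mathcal O_C(p)^{\otimes d}$ with $p=C\cap C'$, and $K_C=(K_S+C)|_C=(d+1)p$. Thus $p$ is a smooth point of $C$ with $\omega_C\cong\OO_C((2g-2)p)$ and $2g-2=d+1$.

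Now apply Lemma \ref{lemma: gaps come in pairs} and Lemma \ref{lem: semigroup}. The number $d$ is a non-gap, since $h^0(C,dp)\ge 2$. The half-canonical divisor $\frac{d+1}{2}p$ is a theta characteristic. Use Theorem \ref{thm: parity of theta} on the pencil $|C|$, comparing with the restriction of the theta characteristic $K_S|_D$. These combined constraints on the semigroup should leave only $d=3,9,11$. I expect this step to require a careful case enumeration of numerical semigroups of genus $\frac{d+3}{2}$ containing $d$ and symmetric in the sense of the lemma.

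\textbf{Step 3: $d=5$.} Here $g(C)\ge4$ and $n\le7$. List the triples $(n,C^2,K_S C)$ allowed by Step 2 and by the inequality $K_S^2\ge 5\deg\Sigma=5(n-2)$. Then exclude each case with $n\ge6$, and the case $n=5$ with $C^2=2$, $K_S C=6$. The tools are the Weierstra\ss\ gap structure on $C$, as in Step 2, for the $C^2=1$ cases, and the parity of the theta characteristic $(K_S+C)|_C$-type bundles. The hard case $n=5$, $C^2=2$, $K_S C=6$ needs Theorem \ref{thm: infinitesimal}, applied to a one-parameter family of curves in $|C|$ degenerating to a multiple or reducible member through the base points. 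A parity mismatch between $h^0$ on the general and the special fiber then gives the contradiction. I expect this last exclusion to be the main obstacle. The first thing I would try is to set up the family so that the special fiber is $2C_0$ and use part (1) of Theorem \ref{thm: infinitesimal}.
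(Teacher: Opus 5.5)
Your proposal has genuine gaps, and the decisive one is in (iv).

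\textbf{Part (iv).} An enumeration of symmetric numerical semigroups cannot produce the list $d=3,9,11$. The constraints you impose (genus $\frac{d+3}{2}$, largest gap $d+2$, $d$ a non-gap, and even $2$ a gap) hold for infinitely many odd $d$: for instance the semigroup generated by $3$ and $\frac{d+5}{2}$ satisfies them whenever $3\nmid\frac{d+5}{2}$. For $d=5,7$ they still leave the gap sets $\{1,2,3,7\},\{1,2,4,7\}$, resp.\ $\{1,2,3,4,9\},\{1,2,3,5,9\}$. Comparing with the theta characteristic $K_S|_D$ gives nothing, since $D\in|K_S|$ is not a fibre of the family $|C|$.

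In the paper the list is simply ``odd $d\le 11$ minus $\{5,7\}$''.
\begin{itemize}
\item The bound $d\le 11$ is Bogomolov--Miyaoka--Yau, $d^2=d\deg\Sigma\le K_S^2\le 9(d+3)$, which you never invoke.
\item The values $d=5,7$ are excluded by a global argument on $S$ that actually uses $p_g=d+2$. Blow up $p$ and contract the $(-2)$-curves in the fibres, so that all fibres become integral. Harris' theorem then shows that the parity of $h^0(\OO_C(\frac{d+1}{2}p))$ is the same for \emph{every} $C\in|C|$; for $d=5,7$ this parity distinguishes the two possible gap sets, so the gap set is constant.
\item Consequently the relevant $R^1\tilde f_*$ are locally free of equal rank, $\tilde f_*\OO(m_0E)\cong\OO_{\pp^1}\oplus\OO_{\pp^1}(-m_0)$, and $h^0(m_0C)=m_0+2$ for the first non-gap $m_0\le\frac{d+3}{2}$.
\item The ranks of $H^0(kC)\to H^0(\OO_C(kp))$ are non-decreasing in $k$, so $h^0(dC)\ge\frac{3d+1}{2}>d+2=p_g$, a contradiction.
\end{itemize}

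\textbf{Part (v).} The same kind of idea is missing here.
\begin{itemize}
\item \emph{The numerical cases.} They do not follow from $K_SC\ge(p_g-2)C^2$ and $K_S^2\ge 5(p_g-2)$, because base points of $|K_S|$ on $C$ enlarge $C^2$ and $K_SC$. The paper obtains them from a special canonical divisor $D_z=D_1+D_2$: the $d$ points over a general $z\in\Sigma$ are dependent for $|2K_S|$, and a Bombieri-type decomposition gives $C_z\le D_1\le D_2$ with $D_1D_2\in\{2,4\}$. The index theorem then finishes.
\item \emph{Each exclusion needs its own idea.} The case $p_g=7$ is the $d=5$ case above. For $p_g=6$, one shows that $Z$ in $K_S=4C+Z$ is a $(-3)$-curve missing $p$ and that all fibres of the contracted fibration have very ample dualizing sheaf. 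Hence $h^0(\OO_C(3p))=1$ for all $C$, so $h^0(4C)=6=p_g$ and $Z$ would be a fixed component.
\item \emph{The case $C^2=2$.} Your plan fails as stated. A parity jump at a non-reduced fibre $C_0\ge 2\Gamma$ is exactly what Harris' theorem permits (this is the surviving case (VO)), so parity alone gives no contradiction.
\end{itemize}
For $C^2=2$ the paper proceeds in three steps:
\begin{enumerate}
\item It proves $q(S)=0$.
\item It kills the two constant-parity cases by splitting direct images $f_*\OO_{\tilde S}(3F+aE_p+bE_q)$, which would yield too many canonical sections.
\item In case (VO) it passes to a double cover $Y$ with $K_Y=7G$, $G^2=1$. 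There part (2) of the infinitesimal parity theorem (the torsion of $R^1$ is $\mathcal T\oplus\mathcal T$) forces $h^0(5G)=6$ and $\chi(\OO_Y)=13$. So $y$ is not a base point of $|K_Y|$, and the $d=7$ argument gives $h^0(5G)\ge 7$, a contradiction.
\end{enumerate}

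\textbf{Smaller errors in Steps 1--2.}
\begin{itemize}
\item $C^2>0$ does not follow from the vertex having a preimage. It follows from parity: on the blow-up $\hat S$ of the base points of $|K_S|$, the strict transform $G$ of a general line has $K_{\hat S}G=d+2\sum E_iG$ odd, so $G^2$ is odd, hence positive. This also disposes of scrolls and of $q(\Sigma)>0$.
\item ``$\fie$ separates the curves of $|C|$'' means $G\equiv\delta\hat C$ with $\delta=1$, not that $\fie|_C$ is birational. It is proved from $p_g\le 2+d/\delta^2$ together with BMY.
\item $p_g-2\le d$ must be read off the moving part, $d=M\hat C\ge(p_g-2)\hat C^2$, not from $K_SC$.
\end{itemize}
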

 The proof takes up the rest of the paper:  proving  (iv) and (v)  is by far the hardest part. 

In the next section we prove Proposition \ref{prop: max-case}, which is a weaker version of Theorem \ref{thm: 42}. More precisely Proposition \ref{prop: max-case} proves (i), (ii), (iii) and a weaker version of (iv), namely that if $p_g(S)=d+2$, then $d \le 11$, the system $|K_S|$ is base point free, $C^2=1$ and $K_S=dC$. The proof of (iv) is completed by excluding that $p_g(S)=d+2$  occur for $d=5$ or $7$: this is Proposition \ref{prop: no d=5,7}. As a consequence $d=5$ implies $p_g \le 6$: (v) then follows  by Propositions \ref{prop: cases5}, \ref{prop: no pg=6} and \ref{prop: no C^2=2}.

The conclusion of Theorem \ref{thm: 42} holds also if one  assumes that $\Sigma$ is a surface of minimal degree $p_g-2$ in $\pp^{p_g-1}$. In fact by Theorem \ref{thm: xiao-ruled} the only surface of minimal degree which is not ruled by lines is the Veronese surface  $V_{2,2}\subset \pp^5$ and we have the following:
\begin{prop}
 Let $S$ be a minimal surface of general type  such that the general canonical curve $D\in |K_S|$ is smooth. If the  canonical image of $S$   is the Veronese surface  $V_{2,2}\subset \pp^5$ then the degree $d$ of the canonical map $\fie$ of $S$ is even.
\end{prop}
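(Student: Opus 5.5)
The idea is to show that $K_S$ is twice the pull-back of a line of $\pp^2$, and then to read off the parity of $d$ from the adjunction formula.

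\emph{Setup.} Identify $\Sigma=V_{2,2}$ with $\pp^2$, so that $\OO_\Sigma(1)=\OO_{\pp^2}(2)$. Let $\sigma\colon S'\to S$ be a minimal sequence of blow-ups resolving the base points of the moving part of $|K_S|$. Let $\psi\colon S'\to\pp^2$ be the induced morphism, so that $\fie\circ\sigma$ equals $\psi$ followed by the Veronese embedding. Set $H:=\psi^*\OO_{\pp^2}(1)$. Then the moving part of $|\sigma^*K_S|$ is $M=2H$, and $\deg\fie=\deg\psi=H^2=:d$.

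\emph{Step 1: $|K_S|$ has no fixed part.} Write $K_S=|M_0|+Z$ with $Z$ the fixed part. A general $D\in|K_S|$ is smooth. Since $K_S$ is nef and big, canonical curves are $1$-connected, hence connected; being smooth, $D$ is therefore irreducible. As $D=M_0+Z$ with $M_0$ moving, this forces $Z=0$.

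\emph{Step 2: $|K_S|$ has no base points.} This is the key step. Suppose that base points exist. Let $E\subset S'$ be the exceptional curve of the last blow-up in $\sigma$; it is a $(-1)$-curve containing no base points of $|M|$. Then $M\cdot E$ equals the multiplicity, at the last blown-up (possibly infinitely near) point, of the general member $D$ of $|K_S|$. Since $D$ is smooth and passes through all the (infinitely near) base points, this multiplicity is $1$, so $M\cdot E=1$. On the other hand $M=2H$ gives $M\cdot E=2H\cdot E$, which is even, a contradiction. Hence $\sigma$ is the identity, $|K_S|$ is base point free, and $K_S=2H$ in $\Pic(S)$. The delicate point here is the claim that, at every infinitely near base point, the general canonical curve has multiplicity exactly $1$ and the moving part meets the final exceptional curve exactly in that multiplicity. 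This is where the smoothness hypothesis (a) is used, and I expect it to require the most care in writing out.

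\emph{Step 3: parity.} By Step 2, $\psi\colon S\to\pp^2$ is a morphism with $K_S=2H$. A general $H$ is the pull-back of a general line, so it is a smooth curve by Bertini. By adjunction, $H^2+K_S\cdot H=2p_a(H)-2$ is even. But $H^2+K_S\cdot H=H^2+2H^2=3H^2=3d$, so $3d$ is even and hence $d$ is even. As a consistency check, $K_S^2=4H^2=4d=\deg\fie\cdot\deg\Sigma$, in agreement with the absence of base points.
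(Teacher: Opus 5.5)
Your proof is correct. The overall skeleton is the paper's: show $|K_S|$ is base point free, so $K_S=2H$ with $H^2=d$, then note that $H^2+K_SH=3d$ is even. The key step excluding base points, however, is done differently.

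\textbf{The paper's route.} It stays on $S$. Writing $K_S=2C$ with $C$ the pull-back of a line, the surjectivity of $\Sym^2H^0(\OO_{\pp^2}(1))\to H^0(\OO_{\pp^2}(2))$ shows that every section of $K_S$ is a quadratic expression in the three sections of $\OO_S(C)$. If $\fie$ is not a morphism, $|C|$ has a base point on $S$ itself, and that point is then a singular point of every canonical curve. This contradicts (a) immediately, with no blow-ups and no infinitely near points.

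\textbf{Your route.} You resolve the base locus and play $M\cdot E=1$, for the last exceptional curve $E$, against $M=2H$. This is the same divisibility argument the paper uses in Proposition \ref{prop: max-case}(iv). The point you flagged as delicate is fine. Strict transforms of a smooth $D$ stay smooth, so the multiplicity $m_k$ at the last blown-up point is $1$. Writing $\beta$ for the last blow-up, $M=\beta^*M_{k-1}-m_kE$ gives $M\cdot E=m_k=1$; in fact only the last blow-up matters.

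\textbf{Comparison.} Your argument is purely numerical and avoids identifying $H^0(K_S)$ with quadrics in sections of $\OO_S(C)$, at the cost of the resolution bookkeeping. The paper's argument is shorter. Your Step 1 also makes explicit the absence of a fixed part, which the paper uses tacitly when it writes $K_S=2C$.
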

\begin{proof}
Let $C$ be the pull back via $\fie$ of a  line in $\pp^2\cong V_{2,2}$, so that $K_S=2C$.
 If  $\fie$ is  not a morphism, then $|C|$ has at least a base point $x\in X$. Since $|K_S|=\fie^*|\OO_{\pp^2}(2)|$ and the map $\Sym^2H^0(\OO_{\pp^2}(1))\to H^0(\OO_{\pp^2}(2))$ is surjective, it follows that $x$ is a base point of $|K_S|$ of multiplicity at least 2, namely the general canonical curve is singular at $x$, contradicting the assumptions. So $\fie$ is a morphism and $C^2=d$. We conclude by observing that  $C^2=C(K_S-C)$ is even by the adjunction formula.
 \end{proof}

Theorem \ref{thm: 42} has the following almost immediate consequence:

\begin{cor}\label{cor: d<7} Let $S$ be a minimal surface of general type such that the canonical map $\fie\colon S\to \pp^{p_g(S)-1}$ is generically finite. Assume that:
\begin{itemize}
\item[(a)]  the general canonical curve $D\in |K_S|$ is smooth;
\item[(b)] the degree $d$ of $\fie$ is odd;
\end{itemize}
Then:
\begin{itemize}
\item if  $d=7$ then $p_g \leq 111$;
\item if  $d=9$ then $p_g \leq 15$;
\item if  $d=11$ then $p_g \leq 13$;
\item if  $d \ge 13$ then $p_g \leq 2+\frac{27}{d-9}$.
\end{itemize}
\end{cor}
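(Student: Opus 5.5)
The plan is to combine the inequality $K_S^2\ge d\deg\Sigma$ with the Bogomolov--Miyaoka--Yau inequality, and to split according to whether $\Sigma$ is ruled by lines. Set $n=p_g(S)-1$ and write $|K_S|=|M|+F$, with $|M|$ the moving part.

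\emph{Step 1: the basic inequality.} Since $K_S$ is nef, we have $K_S^2\ge K_SM\ge M^2$. Moreover $M^2\ge d\deg\Sigma$, because $M^2$ is at least the degree of $\fie$ times the degree of its image; base points only increase $M^2$ relative to $d\deg\Sigma$. On the other side, Bogomolov--Miyaoka--Yau gives $K_S^2\le 9\chi(\OO_S)=9(1-q+p_g)\le 9(p_g+1)$. Altogether
\[
d\deg\Sigma\le 9(p_g+1).
\]

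\emph{Step 2: lower bounds for $\deg\Sigma$.} Since $\Sigma\subset\pp^{n}$ is a non-degenerate surface, $\deg\Sigma\ge n-1=p_g-2$. Substituting into Step 1 gives $d(p_g-2)\le 9p_g+9$, that is $(d-9)p_g\le 2d+9$. For $d\ge 13$ this is exactly $p_g\le 2+\frac{27}{d-9}$, which settles the last item without any case distinction.

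\emph{Step 3: the cases $d=7,9,11$.} Here the estimate of Step 2 is too weak, so we distinguish two cases.
\begin{itemize}
\item If $\Sigma$ is ruled by lines, Theorem \ref{thm: 42}(i) gives $p_g\le d+2$, that is $p_g\le 9, 11, 13$ respectively.
\item If $\Sigma$ is not ruled by lines, Theorem \ref{thm: xiao-ruled} applies. Either $\Sigma$ is the cubic Veronese surface in $\pp^9$, so $p_g=10$, or $\deg\Sigma\ge\frac43(p_g-3)$. In the latter case Step 1 gives $\frac{4d}{3}(p_g-3)\le 9(p_g+1)$, i.e. $(4d-27)p_g\le 12d+27$:
  \begin{itemize}
  \item for $d=7$ this is $p_g\le 111$;
  \item for $d=9$ it is $9p_g\le 135$, i.e. $p_g\le 15$;
  \item for $d=11$ it is $17p_g\le 159$, i.e. $p_g\le 9$.
  \end{itemize}
\end{itemize}
Taking the maximum over the two cases (and over the Veronese exception $p_g=10$) yields the bounds $111$, $15$, $13$ in the statement.

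The argument is essentially numerical, and Theorem \ref{thm: 42}(i) does the real work in the ruled case. The only points needing care are the inequality $M^2\ge d\deg\Sigma$ when $|K_S|$ has base points, and remembering the Veronese exception in Theorem \ref{thm: xiao-ruled}. Both are standard or harmless, so I expect no serious obstacle.
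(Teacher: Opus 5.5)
Your proof is correct and is essentially the paper's argument. For $d\ge 13$ you combine Bogomolov--Miyaoka--Yau with $\deg\Sigma\ge p_g-2$, and for $d=7,9,11$ you use the dichotomy of Theorem \ref{thm: xiao-ruled} together with Theorem \ref{thm: 42}(i) in the ruled case. The only differences are presentational: you split into ruled and non-ruled cases instead of arguing by contradiction for large $p_g$, and you treat the Veronese exception ($p_g=10$) explicitly, which the paper leaves implicit.
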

\begin{proof} 
As in Theorem \ref{thm: 42}, we denote by $\Sigma$ the canonical image $\varphi(S)$.
By the Bogomolov-Miyaoka-Yau inequality 
\begin{equation}\label{eqn: Bogomolov-Miyaoka-Yau}
d\deg\Sigma\le K^2_S\le 9(p_g+1)
\end{equation}
and $\deg \Sigma \geq p_g-2$ we obtain that 
\begin{equation}\label{eqn: inequality among pg and d}
p_g  \leq \frac{2d+9}{d-9}=2+\frac{27}{d-9} \text{ when } d\geq 10.
\end{equation}

If $d=11$, \eqref{eqn: Bogomolov-Miyaoka-Yau} implies $\deg \Sigma \le \frac{9}{11} (p_g+1)$, which is, for $p_g \ge 10$, smaller than $\frac 43(p_g-3)$: then by Theorem \ref{thm: xiao-ruled}, the surface $\Sigma$ is ruled by lines and  Theorem \ref{thm: 42} gives $p_g\le d+2=13$.

If $d=9$, \eqref{eqn: Bogomolov-Miyaoka-Yau} implies $\deg \Sigma \le  p_g+1$, which is, for $p_g \ge 16$, smaller than $\frac 43(p_g-3)$, forcing $\Sigma$ to be ruled the lines: then Theorem \ref{thm: 42} gives $p_g\le 11$, a contradiction. The same  argument proves that $d=7$ implies $p_g \le 111$. 
\end{proof}

\begin{remark} Xiao Gang (\cite{xiao-degree}) proved that if the canonical map of a surface has degree $9$ then $p_g \leq 132$; assuming that the general canonical curve  be smooth we obtain a much stronger bound. Moreover our assumption allows us to get a bound also for $d=7$.
\end{remark}

\section{General properties}

Here we prove a partial version of Theorem \ref{thm: 42}. 

More precisely, note  first that   Proposition \ref{prop: max-case} below has exactly the same assumptions as  Theorem \ref{thm: 42}. Moreover the claims  (i),(ii),(iii) of Proposition  \ref{prop: max-case} are the same as in Theorem \ref{thm: 42}, whereas claim (iv) of Proposition  \ref{prop: max-case} is weaker than claim (iv) of Theorem \ref{thm: 42}. 

\begin{prop}\label{prop: max-case}  
Let $S$ be a minimal surface of general type with $p_g(S)\ge 4$ such that the canonical map $\fie\colon S\to \pp^{p_g(S)-1}$ is generically finite. Assume that:
\begin{itemize}
\item[(a)]  the general canonical curve $D\in |K_S|$ is smooth;
\item[(b)]  the degree $d$ of $\fie$ is  odd;
\item[(c)] the canonical image $\Sigma$ is ruled by lines.
\end{itemize}
Then:
\begin{enumerate}
\item $p_g(S)\le d+2$ and $\Sigma$ is the cone over the rational normal curve of degree $p_g(S)-2$;
\item the strict transforms of the rulings of $\Sigma$ induce a linear pencil of non hyperelliptic curves $|C|$ with $C^2>0$ and $g(C)\ge \frac{d+3}{2}$; 
\item $\fie$ separates the curves of $|C|$;
\item if $p_g(S)=d+2$, then $d \le 11$, the system $|K_S|$ is base point free, $C^2=1$ and $K_S=dC$.
\end{enumerate} 
\end{prop}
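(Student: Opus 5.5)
We first show that $\Sigma$ is linearly normal with $q(\Sigma)=0$, and then apply Theorem~\ref{thm: cdm}. Since $\fie^*|\OO_\Sigma(1)|$ is the complete system $|K_S|$ (up to fixed part), the restriction $H^0(\OO_{\pp^{p_g-1}}(1))\to H^0(Y,\rho^*\OO_\Sigma(1))$ is an isomorphism: a section on the resolution $Y$ pulls back to a section of $K_S$ on $S$ (after resolving the map $S\dashrightarrow Y$). So $\Sigma$ is linearly normal.

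Next, the ruling of $\Sigma$ pulls back to a pencil $\{C\}$ on $S$. Write $|K_S|=|M|+Z$ with $M\equiv aC+(\text{effective})$. If $q(\Sigma)>0$, or if $\Sigma$ is a scroll rather than a cone, then the lines through a general point $x\in\Sigma$ behave as follows:
\begin{itemize}
\item[(1)] in the cone case, every line passes through the vertex, so $|C|$ has base points and $C^2>0$;
\item[(2)] in the scroll case, the ruling gives a fibration with $C^2=0$.
\end{itemize}
In case (2) one gets $K_S\ge (\deg\Sigma)\,C$ up to base points, with $\fie|_C$ of degree $d$ onto a line. Hence $K_SC\ge 2g(C)-2\geq$ something, and the restriction of $|K_S|$ to $C$ gives $h^0(K_S|_C)\ge 2$ with degree $\ge d$. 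Here I would use hypothesis (a). A general $D\in|K_S|$ is smooth, and $D$ restricted to the preimage of a hyperplane section of a scroll splits as a sum of fibres plus a curve. A scroll hyperplane section containing a line is reducible, so smoothness of $D$ forces the sections to be irreducible; combining this with the adjunction parity argument ($K_SC+C^2$ even and $C^2=0$, so $K_SC=d\cdot\deg(\text{section on line})$ is even, i.e.\ $d\cdot 1$ even) contradicts $d$ odd. The same parity trick should kill $q(\Sigma)>0$ via part (ii) of Theorem~\ref{thm: cdm}. Therefore $\Sigma$ is a cone over a rational normal curve of degree $n-1=p_g-2$, and in particular $|C|$ is a linear pencil with $C^2>0$.

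On the cone, a hyperplane section through the vertex is $p_g-2$ lines. This gives $K_S=(p_g-2)C+Z$ with $Z$ fixed, and $C$ maps with degree $d$ onto a line. Because $D$ is smooth, the base points of $|C|$ are simple on general $D$. Computing $K_SC=(p_g-2)C^2+ZC$ and using that $\fie|_C$ factors through $|K_S|_C|$ of degree $d$ plus base points, I would show $C$ is non-hyperelliptic: a hyperelliptic $C$ would make $\fie|_C$ factor through the $g^1_2$, forcing $d$ even. The map $\fie$ separates the curves $C$, since distinct rulings are distinct lines. Riemann--Roch on $C$ for $K_S|_C$, which has a $g^1_d$ sub-system of degree $\le 2g-2$ and is non-special-ish, gives $d\ge\ldots$; more precisely $(p_g-2)C^2\le K_SC$ and $K_S|_C\le K_C$. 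Clifford's theorem (strict, since $C$ is non-hyperelliptic) applied to the pencil gives $g(C)\geq (d+3)/2$.

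The bound $p_g\le d+2$ comes from $d\deg\Sigma\le K_S^2$ together with the index theorem, or more directly as follows. On $C$, the divisor $K_S|_C=(p_g-2)C|_C+Z|_C$ has degree $\geq (p_g-2)C^2$, and it contains $d$ times the point $p$ mapped to the vertex. I would argue $(p_g-2)C^2\le d\,C^2$ by comparing the degree of $C|_C$ with the fibre over the vertex. Equality forces $Z=0$, $C^2=1$, $K_S=dC$, and $|K_S|$ base point free. The bound $d\le 11$ should then come from Lemma~\ref{lemma: gaps come in pairs}. Here $K_C=(K_S+C)|_C=(d+1)p$, so $2g-2=d+1$. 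The semigroup generated at $p$ contains $h^0(jC|_C)$-jumps from $|K_S|$ restricted to $C$ (Lemma~\ref{lem: semigroup}), and counting gaps against the pairs should bound $g$ and hence $d$. I expect this last step, the Weierstra\ss\ gap count giving $d\le 11$, to be the main obstacle, along with making the parity argument excluding scrolls rigorous.
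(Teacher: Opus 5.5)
Your parity idea for the cone is the right starting point, but two steps fail outright and several others need repair. The first failure is (iii). The argument ``distinct rulings are distinct lines'' misses the point. Let $h\colon\h S\to B$ be the pencil with connected fibres induced by the ruling on the blow-up $\h S$ of the base points of $|K_S|$. The strict transform $G$ of a general line is numerically $\delta\h C$, where $\delta$ is the degree of $B$ over the base curve of the ruling. If $\delta>1$, then $\fie$ sends $\delta$ distinct members of the pencil onto the same line, and nothing in your sketch excludes this. The paper excludes it numerically:
\begin{itemize}
\item $\delta$ divides $d$, so it is odd, and $G^2\ge\delta^2$;
\item the index theorem $G^2M^2\le (MG)^2$, with $M^2=d(p_g-2)$ and $MG=d$, gives $p_g\le 2+d/\delta^2$;
\item combined with Bogomolov--Miyaoka--Yau ($d\le 22$ when $p_g\ge 4$), only $\delta=3$, $d=21$, $p_g=4$ survives;
\item this last case is impossible, because a hyperplane through the vertex of the quadric cone gives $6\h C\le K_{\h S}$, whence $p_g\ge 7$.
\end{itemize}
The second failure is the bound $d\le 11$: the Weierstra\ss\ gap count you propose cannot work. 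Non-hyperelliptic curves with a subcanonical point $K_C=(2g-2)p$ exist in every genus $g\ge 3$, so gap combinatorics at $p$ on a single curve gives no bound on $g=\frac{d+3}{2}$. Indeed, the paper needs theta-characteristic parity together with the global pencil structure just to exclude $d=5,7$. The bound is immediate from BMY: $d^2=d\deg\Sigma\le K_S^2\le 9(p_g+1)=9(d+3)$.

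The remaining steps need repair as follows.
\begin{itemize}
\item \emph{Use of hypothesis (a).} It does not enter through reducible hyperplane sections of scrolls: a general canonical curve is the pull-back of a general hyperplane section, which contains no line. It enters because it forces $|K_S|$ to have no fixed part and only simple base points, so the moving part on $\h S$ is $M=K_{\h S}-2\sum E_i$. Then $K_{\h S}G\equiv MG=d$ is odd, and adjunction makes $G^2$ odd, hence $G^2>0$. So the pencil has base points, $B=\pp^1$, and $\Sigma$ is a cone over a rational curve. This disposes of scrolls and of $q(\Sigma)>0$ at once. Your claim that a cone automatically gives $C^2>0$ is false in general, since the strict transforms of lines through the vertex may be disjoint on $\h S$.
\item \emph{The bound $p_g\le d+2$.} Your ``direct'' inequality is fine if written on $\h S$ as $(p_g-2)\h C^2+Z\h C=M\h C=d$. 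It does not work on $S$, where $K_SC$ may exceed $d$.
\item \emph{The equality case.} Here the conclusions are only asserted. The paper gets $\h C^2=1$ and $M\equiv d\h C$ from $d^2=M^2\le\h C^2M^2\le(\h CM)^2=d^2$, hence $Z=0$ in $M=d\h C+Z$. Base point freeness follows because a simple base point produces a $(-1)$-curve $E$ with $ME=1$, which is not divisible by $d$.
\item \emph{The genus bound.} $g(C)\ge\frac{d+3}{2}$ comes from adjunction, $2g(\h C)-2=\h C^2+K_{\h S}\h C\ge 1+d$. Clifford's theorem yields nothing here.
\end{itemize}
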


\begin{proof} Set $p_g(S)=:n+1$.
  Let $\epsi\colon \h S \to S$  be the blow up at the, possibly infinitely near, base points of $|K_S|$, and denote by $E_1,\dots E_k$ the corresponding, possibly reducible, $(-1)$-curves. We denote by $\h\fie\colon \h S\to \pp^{n}$   the morphism induced by $\fie$.
Since the general canonical curve is smooth by assumption,  the moving part of $|K_{\h S}|$ is $M:= K_{\h S}-2\sum E_i$. 

Let $h\colon \h S\to B$ be the pencil (with connected fibers) induced by the ruling  of the canonical image and denote by $\h C$ a general fiber of $h$.
The strict transform $G$ of a general  line of $\Sigma$ is numerically equivalent to $\delta\h C$, where $\delta$ is a divisor of $d$, 
 so $G^2=\delta^2\h C^2\ge 0$. In addition $MG=d$, so  $K_{\h S}G=d+2\sum E_iG$ is odd and the adjunction formula gives $G^2>0$, hence also $\h C^2>0$. So $h$ is not a morphism, $B=\pp^1$ and  $\Sigma$ is a cone over a rational curve. 
 
  By Theorem \ref{thm: cdm},  $\Sigma$ has degree  $n-1$ and  it  is the cone over the rational normal curve of degree $n-1$.
The Hodge index theorem gives:
$$ G^2(d(p_g(S)-2)) =G^2M^2\le (MG)^2=d^2$$
from which 
\begin{equation}\label{eq: pg<d-2}
p_g(S) \le 2+\frac{d}{G^2}\le d+2.
\end{equation}
We have then proved (i),

\smallskip

Assume that $\h \fie$ does not separate the curves of $|\h C|$, namely that $\delta>1$.  The assumption $p_g \geq 4$ and \eqref{eqn: Bogomolov-Miyaoka-Yau} imply $d \leq 22$. Since $G^2 \geq \delta^2$, then \eqref{eq: pg<d-2} implies $\delta <5$. 
Since $\delta$ is odd, we obtain $\delta=3$.
Then  \eqref{eq: pg<d-2} gives $d \geq 18$. On the other hand $d$ is odd, divisible by $\delta=3$ and not greater than $22$, so $d=21$.  This implies, using  \eqref{eq: pg<d-2} again, $p_g=4$. Then pulling back a hyperplane section through the vertex of the quadric cone $\Sigma$ we find that $6{\h C} \leq K_{\h S}$, which implies $p_g \geq 7$, a contradiction. This shows that $\fie$ separates the curves of $|C|$, proving (iii).

\smallskip

Then $G=\h C$ and $2g ({\h C})-2={\h C}^2+K_{\h S}{\h C}\ge 1+K_{\h S}{\h C}\ge 1+M{\h C}=1+d$, so $g(C) \ge g(\h C)\ge \frac{d+3}2$, completing the proof of (ii).

\smallskip

Now we assume $p_g(S)=d+2$. The Bogomolov-Miyaoka-Yau inequality \eqref{eqn: Bogomolov-Miyaoka-Yau} gives $d\le 11$. 
The Index Theorem gives:
$$d^2=d\left(p_g (S)-2 \right)=d\left( \deg \Sigma \right) = M^2\le {\h C}^2M^2\le ({\h C}M)^2=d^2,$$
so ${\h C}^2=1$ and $M\sim_{num}d{\h C}$. On the other hand  $\Sigma$ is the cone over the  rational normal curve of degree $d$, so  $M= d{\h C}+Z$, with $Z\ge 0$. Comparing the two expressions for $M$ we get $Z\sim_{num}0$ and therefore $Z=0$ and $M=d{\h C}$. Now assume by contradiction  that $|K_S|$ has base points, namely  ${\h S}$ contains a $(-1)$-curve $E$;  since we assumed the general canonical curve to be smooth,  all the base points of $|K_S|$ are simple,  hence $ME=1$, contradicting the fact that $M$ is divisible by $d$ in $\Pic(S)$. So $\h S=S$ and  $dC= M=K_S$.
\end{proof}

\section{Some more results on the case \texorpdfstring{$d=5$}{d=5}}

To study the cases of Theorem \ref{thm: 42} with $p_g \leq d+1$ we analyze  some special canonical divisors on $S$, borrowing some arguments from \cite{triple}.  

\subsection{A useful canonical divisor}
In the situation of Proposition \ref{prop: max-case}, we consider a general point $z$ in the cone $\Sigma \subset {\mathbb P}^{p_g(S)-1}$ and  enumerate as $x_1,\ldots, x_d$ the points of $\varphi^{-1}(z)$. If $H$ is a general hyperplane through $z$, then 
$H_{|\Sigma}$ is a smooth rational curve whose pull-back on $S$ is a smooth canonical curve $D$. Then  $h^0(D,x_1+\ldots +x_d)\geq h^0({\mathcal O}_{{\mathbb P}^1}(1))=2$. By Serre duality,  $x_1, \ldots , x_d$ do not impose independent conditions on the canonical system of $D$ and therefore, a fortiori, on the linear system $|2K_S|$ on $S$.

Consider the pencil $|C|$  on $S$ induced by  the ruling of $\Sigma$. Since $\Sigma$ is a cone, for any general point $z$ of $\Sigma$ there is a canonical curve $D_z$ on $S$ such that $D_z=2C_z+B$, where $C_z$ is the fibre of $|C|$ corresponding to the line through $z$, $B\geq 0$ and all points $x_1,\ldots, x_d$ belong to $C_z$ and not to $B$. 

In this situation we can use \cite[Lemma 2.3]{triple}, that we recall here.
\begin{lem}\label{lem: characterization of dependence}  
Let $S$ be a smooth surface, let $D$ be a curve on $S$ and let $x_1, \ldots , x_d$ be distinct singular points of $D$. 
Let $p\colon \hat S \to S$ be the blow-up at $x_1,\ldots ,x_d$ and  let $ E_1, \ldots , E_d$ be the corresponding exceptional curves. 
Setting  $D^\prime=p^*D-E_1-\ldots -E_d$ and  $D^\prime{}^\prime=p^*D-2E_1-\ldots -2E_d$, the following two conditions are equivalent:

\begin{enumerate}
\item the points $x_1, \ldots , x_d$ do not impose independent conditions on the linear system $|K_S + D|$;
\item the restriction map $H^0(D^\prime,{\OO}_{D^\prime})\to
H^0(D^\prime{}^\prime,{\OO}_{D^\prime{}^\prime})$ is not surjective.
\end{enumerate}
\end{lem}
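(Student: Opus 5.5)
The proof goes through the blow-up cohomology sequences; we compare $|K_S+D|$ with its twist vanishing at the points. On $\hat S$ we have $K_{\hat S}=p^*K_S+\sum E_i$. Sections of $K_S+D$ correspond to sections of $p^*(K_S+D)$. Those vanishing at all $x_i$ correspond to sections of $p^*(K_S+D)-\sum E_i = K_{\hat S}+D''$, since $D''=p^*D-2\sum E_i$. Hence condition (i) says exactly that the cokernel of
\[H^0(\hat S,K_{\hat S}+D'')\to H^0(\hat S,K_{\hat S}+D''+\textstyle\sum E_i)=H^0(\hat S,p^*(K_S+D))\]
has dimension $<d$. The plan is to compute this cokernel via the sequence $0\to K_{\hat S}+D''\to K_{\hat S}+D''+\sum E_i\to \bigoplus \OO_{E_i}(K_{\hat S}+D''+E_i)\to 0$. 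Note that $(K_{\hat S}+D''+E_i)E_i=-1+2-1=0$, so the target has $h^0=d$. Therefore (i) is equivalent to the map $H^1(K_{\hat S}+D'')\to H^1(K_{\hat S}+D''+\sum E_i)$ failing to be injective. Equivalently, the connecting map $\C^d\to H^1(K_{\hat S}+D'')$ is nonzero.

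Next we dualize by Serre duality on $\hat S$. The map $H^1(K_{\hat S}+D'')\to H^1(K_{\hat S}+D')$, where $D'=D''+\sum E_i$, is dual to the restriction $H^1(\OO_{\hat S}(-D'))\to H^1(\OO_{\hat S}(-D''))$ induced by the inclusion $\OO(-D')\subset\OO(-D'')$. So (i) is equivalent to this map on $H^1$ being non-surjective.

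We relate this to the curves $D'$ and $D''$ through the structure sequences $0\to\OO(-D')\to\OO_{\hat S}\to\OO_{D'}\to0$ and the analogous one for $D''$. Since $D''\le D'$ (as $D''=D'-\sum E_i$), there is a morphism of sequences whose middle map is the identity on $\OO_{\hat S}$ and whose right map is the restriction $\OO_{D'}\to\OO_{D''}$. Taking cohomology gives the commutative diagram
\[H^0(\OO_{\hat S})\to H^0(\OO_{D'})\to H^1(\OO(-D'))\to H^1(\OO_{\hat S})\to H^1(\OO_{D'})\]
over the corresponding row for $D''$. Here $D'$ and $D''$ are effective because the $x_i$ are singular points of $D$.

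The main obstacle is the diagram chase in the following step, which has to show that non-surjectivity on $H^1(\OO(-\cdot))$ is equivalent to non-surjectivity on $H^0(\OO_{\cdot})$. Write $K'=H^0(\OO_{D'})/\C$, and note that $K'$ injects into $H^1(\OO(-D'))$; define $K''$ likewise. The cokernels of $H^1(\OO(-D'))\to H^1(\OO_{\hat S})$ and of $H^1(\OO(-D''))\to H^1(\OO_{\hat S})$ are the images of $H^1(\OO_{\hat S})$ in $H^1(\OO_{D'})$ and $H^1(\OO_{D''})$. I would try first to show these two images agree, i.e. that the kernels of $H^1(\OO_{\hat S})\to H^1(\OO_{D'})$ and $H^1(\OO_{\hat S})\to H^1(\OO_{D''})$ coincide. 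Granting this, the snake lemma applied to the morphism of short exact sequences $0\to K'\to H^1(-D')\to \mathrm{Im}\to0$ identifies the cokernel of $H^1(-D')\to H^1(-D'')$ with the cokernel of $K'\to K''$. That is exactly the non-surjectivity of $H^0(\OO_{D'})\to H^0(\OO_{D''})$.

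To prove the kernel equality, I would use that $D'-D''=\sum E_i$ and $H^1(\OO_{\hat S}(-D''))\to H^1(\OO_{\hat S}(-D''+\ldots))$ is controlled by $H^1(\OO_{E_i}(-D''))=H^1(\OO_{\pp^1}(-2))$. This is $1$-dimensional and dual to the $H^0$ term used above, so I would be careful here. Failing a clean argument, I would instead compare both conditions directly through $\dim$ counts using Riemann–Roch on $D'$ and $D''$. Here $p_a(D')-p_a(D'')$ is computed from $E_i$ intersections: $D'E_i=1$ and $D''E_i=2$. This gives $\chi(\OO_{D''})-\chi(\OO_{D'})=d$. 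Matching the dimension counts would then reproduce the same equivalence.
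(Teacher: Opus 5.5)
The paper only quotes this lemma from earlier work, so there is no proof in the text to compare against. Judged on its own, your argument has a genuine gap, exactly at the step you flagged.

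Your reduction is correct. Condition (i) is equivalent to the non-surjectivity of $b\colon H^1(\OO_{\hat S}(-D'))\to H^1(\OO_{\hat S}(-D''))$. Set $I'=\ker\bigl(H^1(\OO_{\hat S})\to H^1(\OO_{D'})\bigr)$ and $I''=\ker\bigl(H^1(\OO_{\hat S})\to H^1(\OO_{D''})\bigr)$, so that $I'\subseteq I''$. The snake lemma applied to your two short exact sequences then gives
\[
\dim\operatorname{coker} b=\dim\operatorname{coker}\bigl(H^0(\OO_{D'})\to H^0(\OO_{D''})\bigr)+\dim I''/I'.
\]
This proves (ii)$\Rightarrow$(i). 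The converse (i)$\Rightarrow$(ii) needs $I''=I'$, and neither of your suggestions yields it. This converse is the direction the paper actually uses: it deduces $h^0(\OO_{D''})\ge 2$ from the dependence of the $x_j$ on $|2K_S|$, for surfaces whose irregularity is not known to vanish.

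From $0\to\OO_{\sum E_i}(-D'')\to\OO_{D'}\to\OO_{D''}\to 0$, the kernel of $H^1(\OO_{D'})\to H^1(\OO_{D''})$ is the image of $\bigoplus_i H^1(\OO_{E_i}(-2))\cong\C^d$. So $I''/I'$ is the intersection, inside $H^1(\OO_{D'})$, of this image with the image of $H^1(\OO_{\hat S})$. Whether two subspaces meet is not decided by exact sequences. Riemann--Roch only gives $\chi(\OO_{D''})-\chi(\OO_{D'})=d$, which is already encoded in that long exact sequence. No dimension count can therefore distinguish $\operatorname{coker} b$ from $\operatorname{coker}\bigl(H^0(\OO_{D'})\to H^0(\OO_{D''})\bigr)$.

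If $q(S)=0$, then $H^1(\OO_{\hat S})=0$ and your proof is complete. In general you need a non-formal input, for instance the following.

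In characteristic $0$ the kernel of $\Pic^0(\hat S)\to\Pic^0(D'')$ is smooth, so its identity component $K^0$ is an abelian variety with Lie algebra $I''$. The image of $K^0$ in $\Pic^0(D')$ lies in $\ker\bigl(\Pic^0(D')\to\Pic^0(D'')\bigr)$.

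This kernel sits in the linear (affine) part of $\Pic^0(D')$. Indeed, $D'$ and $D''$ differ only by the rational curves $E_i$. Hence the abelian quotients of their Picard varieties, which are products of Jacobians of the normalized components of the reduced curves, coincide.

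A complete connected group maps trivially to an affine group. So $K^0\subseteq\ker\bigl(\Pic^0(\hat S)\to\Pic^0(D')\bigr)$, and therefore $I''\subseteq I'$. With this extra step, your snake-lemma argument does prove the lemma.
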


We apply the above to $D_z$. Since $D_z$ is 1-connected,  it is not hard to check that $D^\prime$ is also 1-connected  and so  $h^0(D^\prime,{\OO}_{D^\prime})=1$.   Since $x_1,...,x_d$ do not impose independent conditions to $|2K_S|$,  by Lemma \ref{lem: characterization of dependence}  
$h^0(D^\prime{}^\prime,{\OO}_{D^\prime{}^\prime})\ge 2$.   
Note that, by our choice of $D_z$, none of the $E_i$ is a component of $D^\prime{}^\prime$.  Applying Lemma A.1 of \cite{cfm}  to the sheaf ${\OO}_{D^\prime{}^\prime}$, we obtain a decomposition $D^\prime{}^\prime= A_1+A_2$ with $A_1,A_2$ effective non-zero divisors such that  $A_1A_2\leq 0$ and every component $\theta$ of $A_1$ satisfies $\theta A_2\leq 0$. 

 Reasoning as in the proof of Theorem 5 of \cite{bom}, this decomposition of $D^\prime{}^\prime$ yields in turn a decomposition of $D_z$  as $D_z=D_1+D_2$ where  $ D_1D_2=m\leq d$  and at least $m$ of the points $x_1,..., x_d$ belong to both $D_1$ and $D_2$.  Note that $m\geq 2$, by the 2-connectedness of the canonical curves.  Furthermore, any component $\theta$ of $D_1$ not passing through the points $x_1,...,x_d$ satisfies $\theta D_2\leq 0$. 
  By our choice of $D_z$ we conclude that $C_z$ is a common component of $D_1$ and $D_2$ and that every other component  $\theta$ (if any) of $D_1$ satisfies $\theta D_2\leq 0$.  Furthermore, since $K_SC\geq d$, $K_SD_i\geq d$ for $i=1,2$.  

Since $D_z$ is a canonical curve, $m$ is even.  If $d$ is odd, $m< d$ and  so $D_i^2+m=K_SD_i\geq d$  implies that $D_i^2\ge d-m>0$, for $i=1,2$.

In conclusion, we have 
\begin{prop}\label{prop: coming back}
In the assumptions of  Theorem \ref{thm: 42}, let $z$ be a general point of $\Sigma$ and let $x_1, \ldots, x_d$ be the preimages of $z$ via the canonical map. 

There there is a canonical divisor $D$ on $S$ with a decomposition $D=D_1+D_2$
such that the even number $m= D_1D_2$ is strictly smaller than $d$, and at least $m$ of the points  $x_j$ are simple points of $D_1$; furthermore any component  $\theta$ of $D_1$ not passing through the points $x_1, \ldots , x_d$ satisfies  $\theta D_2\leq 0$. 

Let $C_z$ be the element of the pencil $|C|$ on $S$ induced by the ruling of $\Sigma$ containing the $x_j$. Then
\begin{enumerate}

\item $C_z\leq D_1 \leq D_2$;
\item $1 \le C^2 \le D_1^2 \le D_2^2$;
\item if $D_1-C_z>0$, then $D_1^2\geq C^2+2$.
\end{enumerate}
\end{prop}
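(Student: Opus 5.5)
The first part of the statement is a summary of the discussion just before it, so the plan is to organize that discussion and then derive (i)--(iii) from the extra structure. Set $D=D_z=2C_z+B$, the canonical divisor cut out by a hyperplane through the vertex and $z$; all $x_j$ lie on $C_z$ and none on $B$. Lemma \ref{lem: characterization of dependence}, Lemma A.1 of \cite{cfm} and the argument of \cite[Theorem 5]{bom} give a decomposition $D=D_1+D_2$ with $m=D_1D_2\le d$, at least $m$ of the $x_j$ on both $D_1$ and $D_2$, and $\theta D_2\le 0$ for every component $\theta$ of $D_1$ missing the $x_j$. The congruence $D_1D_2\equiv D_1^2+D_1K_S\equiv 0 \pmod 2$ from adjunction shows $m$ is even, hence $m<d$ since $d$ is odd.

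For (i), the $x_j$ lying on both $D_i$ lie on $C_z$, which has multiplicity $2$ in $D$ and is irreducible for general $z$. Since the $x_j$ are general points of $C_z$ and not on $B$, each $D_i$ must contain $C_z$. Each $x_j$ lies on $D_1$, so it is a simple point of $D_1$ exactly when $C_z$ has multiplicity one in $D_1$. This holds because $D_1$ and $D_2$ share $C_z$ and $D$ contains it only twice. Interchanging $D_1$ and $D_2$ if necessary, we can arrange $D_1^2\le D_2^2$. The remaining claim $D_1\le D_2$ is the one I expect to cause trouble. I would write $D_1=C_z+B_1$ and $D_2=C_z+B_2$ with $B_1+B_2=B$, and use that every component $\theta$ of $B_1$ satisfies $\theta D_2\le 0$. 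If that does not give containment directly, I would settle for the ordering via self-intersections and use the freedom to swap $D_1$ and $D_2$.

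For (ii), $C^2\ge1$ is Proposition \ref{prop: max-case}(ii). Next, $D_1^2=D_1(C_z+B_1)$, and the conditions $\theta D_2\le 0$ for components of $B_1$ control the term $D_1B_1$. I would first prove the clean inequality $D_1^2\ge C^2$ by expanding
\[
D_1^2=C^2+2C_zB_1+B_1^2 .
\]
Here $C_zB_1\ge0$ because $C_z$ is nef as a member of a moving pencil. Also $B_1D_2\le0$ gives $B_1C_z+B_1B_2\le0$, and $B_1^2=B_1(D-C_z-B_2)-\ldots$, so combining with $K_SB_1=(2C_z+B)B_1$ I would reduce to showing $B_1^2+2C_zB_1\ge0$. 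The case $B_1=0$ is trivial.

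For (iii), assume $B_1>0$. I would use $2$-connectedness of canonical curves applied to $D=(C_z+B_1)+(C_z+B_2)$, and also to the decomposition $B_1+(D-B_1)$, which gives $B_1(D-B_1)\ge2$. Combined with $B_1D_2\le0$ and $D-B_1=D_2+C_z$, this yields $B_1C_z\ge2$. Then $D_1^2=C^2+2B_1C_z+B_1^2$, and since $B_1^2+B_1C_z+B_1B_2=B_1D-B_1C_z\ldots$, a short manipulation should give $D_1^2\ge C^2+2$. I regard the bookkeeping of $B_1^2$ as the main obstacle; if the inequality $B_1D_2\le 0$ is not enough on its own, I would use $K_SB_1\ge0$, $K_S$ nef and parity from adjunction to push the bound from $\ge C^2+1$ to $\ge C^2+2$.
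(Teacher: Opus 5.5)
\textbf{The gap is the containment $D_1\le D_2$.} This is part of claim (i), and it is what gives $D_1^2\le D_2^2$ in (ii). You do not prove it.

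Your fallback, ``swap $D_1$ and $D_2$ so that $D_1^2\le D_2^2$'', is not available. The condition that every component $\theta$ of $D_1$ missing the $x_j$ satisfies $\theta D_2\le 0$ is asymmetric. It is part of the statement, and you need it for $D_1$ in (ii) and (iii). After a swap you no longer know it.

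Working with $B_1,B_2$ directly does not give containment either, because $B_1$ and $B_2$ may share components. Applying $2$-connectedness to $B_1$ only yields $B_1C_z\ge 2$.

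The missing idea is to strip off the common part. Let $G$ be the greatest common divisor of $D_1$ and $D_2$, and set $G_j=D_j-G$. Since $C_z\le G$, every component of $G_1$ is a component of $B$ and so misses the $x_j$; hence $G_1D_2\le 0$. Also $G_1G_2\ge 0$, because $G_1$ and $G_2$ have no common components. Since $K_S-G_1=G+D_2=2D_2-G_2$, we get
\[
G_1(K_S-G_1)=2G_1D_2-G_1G_2\le 0 .
\]
The $2$-connectedness of canonical divisors then forces $G_1=0$, i.e.\ $D_1\le D_2$. Writing $D_2=D_1+G_2$ gives
\[
D_2^2-D_1^2=(2D_1+G_2)G_2=K_SG_2\ge 0
\]
by nefness of $K_S$, with no swapping needed.

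\textbf{Parts (ii) and (iii).} You have the right ingredients but stop before the computation, and expanding $B_1^2$ is the wrong bookkeeping. Write instead
\[
D_1^2=D_1C_z+D_1B_1=C^2+C_zB_1+D_1B_1 .
\]
Note that $D_1B_1=K_SB_1-D_2B_1\ge 0$, because $K_S$ is nef and $B_1D_2\le 0$. This gives $D_1^2\ge C^2$ at once (the case $B_1=0$ being trivial).

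When $B_1>0$, you correctly obtain $B_1C_z\ge 2$ from $B_1(K_S-B_1)=B_1C_z+B_1D_2\le B_1C_z$. Combined with the display above, this gives $D_1^2\ge C^2+2$ directly. No parity argument is needed.
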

\begin{proof}
From the explanations above we only have to prove (ii), (iii) and  that $D_1\leq D_2$.

  Set $G$ for the greatest common divisor of $D_1$ and $D_2$, and write $G_j=D_j-G$, for $j=1,2$.  We know that all components $\theta$ of $D_1$ different from $C_z$ do satisfy   $\theta D_2\leq 0$, and so  $G_1D_2\leq 0$.  Since $G_1$ and $G_2$  have no common components,  $G_1G_2\ge 0$.  So it follows $G_1(K_S-G_1)=G_1(2D_2-G_2)\le 0$. The $2-$connectedness of every canonical divisor implies  $G_1=0$. Hence $D_1\leq D_2$.

Then, by the nefness of $K_S$, $D_2^2=D_1^2+2D_1G_2+G_2^2=D_1^2+K_SG_2\ge D_1^2$.

Recall that $C^2 \ge 1$ by Proposition \ref{prop: max-case}, (ii). If $D_1=C_z$, (ii) is proved.

Else,  $D_1\neq C_z$. We write $D_1=C_z+P$, with $P\neq 0$. Since the only component of $D_1$ through the points $x_j$ is $C_z$ then  $PD_2\leq 0$.
 Now  $P(K_S-P)= PC_z+PD_2\le PC_z$.  
 So by the 2-connectedness  of canonical divisors we have $PC_z\geq 2$. 
 
 Since  $K_S$ is nef  we have $PD_1 =PK_S-PD_2 \geq 0$. 
On the other hand
$D_1^2=C_zD_1+PD_1=C_z^2+PC_z+PD_1$ and so $D_1^2\geq C_z^2+2$.\end{proof}

\subsection{Application to the case $d=5$}
The previous results can be used to determine the numerical possibilities for $C^2$, $K_SC$ and $p_g(S)$  in the situation of Theorem \ref{thm: 42} for every value of $d$. Below we do it for $d=5$, since this is the only case we analyze in greater detail in the paper. 

\begin{prop}\label{prop: cases5}
In the assumptions of  Theorem \ref{thm: 42}, let $|C|$ be the pencil on $S$ induced by  the ruling of $\Sigma$.

Assume $d=5$. Then
\begin{enumerate}
\item if $p_g \ge 6$ then $C^2=1$, $K_SC=5$;
\item if $p_g=5$ then either $C^2=1$, $K_SC=5$ or $C^2=2$, $K_SC=6$. In the latter case $K_S=3C$.
\end{enumerate}
\end{prop}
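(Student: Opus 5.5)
The plan is to combine Proposition~\ref{prop: coming back} with the numerics coming from $\Sigma$ being a cone. Write $K_S=(p_g-2)C+Z'$ with $Z'\ge 0$ the pull-back of the fixed part of a hyperplane section through the vertex. Since $C$ is nef, $K_SC\ge (p_g-2)C^2$. On the blow-up $\h S$ we have $M=\epsi^*K_S-\sum E_i$ with $M^2=5(p_g-2)$, so $K_S^2\ge 5(p_g-2)$, and $MC=5$ gives $K_SC\ge 5$. We also use parity: $K_SC\equiv C^2 \pmod 2$ by adjunction.

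\textbf{Step 1.} Take $D=D_1+D_2$ as in Proposition~\ref{prop: coming back}, with $m=D_1D_2$ even and $m<5$, so $m\in\{2,4\}$. We know $D_i^2\ge 5-m$ and $D_1^2\le D_2^2$. The Hodge index theorem gives $D_1^2D_2^2\le m^2$. If $m=2$, then $D_i^2\ge 3$ gives $9\le 4$, which is impossible. Hence $m=4$, $D_1^2D_2^2\le 16$, and
\[
K_S^2=D_1^2+D_2^2+8 .
\]

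\textbf{Step 2: the case $D_1\ne C_z$.} Here Proposition~\ref{prop: coming back}(iii) gives $D_1^2\ge C^2+2\ge 3$. Then $D_2^2\le 16/3$, so $K_S^2\le 16$, and together with $K_S^2\ge 5(p_g-2)$ this forces $p_g\le 5$. For $p_g=5$ one still has to pin down $(C^2,K_SC)$. I would use the Hodge inequality $C^2K_S^2\le (K_SC)^2$ together with $K_SC=CD_1+CD_2$ and $K_S-3C\ge 0$. I would try either to show that this subcase cannot occur, or that it lands in the case $C^2=1$, $K_SC=5$. I expect this subcase to be the main obstacle: the available inequalities are looser here, and one may need to use the structure $D_1=C_z+P$ with $PD_2\le 0$ and $PC_z\ge 2$ more carefully.

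\textbf{Step 3: the case $D_1=C_z$.} Here $K_SC=C^2+m=C^2+4$ and $C^2D_2^2\le 16$ with $D_2^2\ge C^2$, so $C^2\le 4$.
\begin{itemize}
\item If $p_g\ge 6$, Hodge together with $K_S^2\ge 20$ gives $20C^2\le (C^2+4)^2$. This excludes $2\le C^2\le 4$, so $C^2=1$ and $K_SC=5$. Combined with Step 2, this proves (i).
\item If $p_g=5$, the inequality $K_SC\ge 3C^2$ reads $C^2+4\ge 3C^2$, so $C^2\le 2$. If $C^2=1$ then $K_SC=5$. If $C^2=2$ then $K_SC=6=3C^2$, so $Z'C=0$ for $K_S=3C+Z'$.
\end{itemize}

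\textbf{Step 4: proving $K_S=3C$ when $C^2=2$.} Here $D_2=K_S-C$ and $D_2^2=K_S^2-2K_SC+C^2=K_S^2-10$. From $C^2D_2^2\le 16$ we get $K_S^2\le 18$. On the other hand $K_S^2=3K_SC+K_SZ'=18+K_SZ'$, and $K_S$ is nef, so $K_SZ'=0$ and $K_S^2=18$. Expanding $(3C+Z')^2$ then gives $Z'^2=0$. Since $Z'$ lies in $K_S^\perp$, on which the intersection form is negative definite, $Z'\equiv 0$. As $Z'$ is effective, $Z'=0$, i.e. $K_S=3C$.
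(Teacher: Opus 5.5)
Steps 1, 3 and 4 are correct, but as written the proposal does not prove (ii). In Step 2 you leave the subcase $D_1\neq C_z$, $p_g=5$ open, and a priori this is the only place where values of $(C^2,K_SC)$ other than $(1,5)$ and $(2,6)$ could appear. A minor point in the same step: $K_S^2\le 16$ does not follow from $D_2^2\le 16/3$ alone. You need the enumeration $3\le D_1^2\le D_2^2$, $D_1^2D_2^2\le 16$, which gives $D_1^2+D_2^2\le 8$.

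The missing case is easy to close, and you do not need the structure $D_1=C_z+P$. Since $K_S=3C+Z'$ with $Z'\ge 0$ and $K_S$ is nef,
$$K_S^2=3K_SC+K_SZ'\ge 3K_SC\ge 15.$$
Combined with $K_S^2\le 16$, this forces $K_SC=5$. The index theorem $C^2K_S^2\le (K_SC)^2=25$ together with $K_S^2\ge 15$ then gives $C^2=1$. So this subcase lands in $(C^2,K_SC)=(1,5)$. Hence $C^2=2$ can only arise in Step 3, where your Step 4 applies and gives $K_S=3C$.

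The paper gets more, using the specific shape of the canonical divisor. For $p_g=5$ the divisor of Proposition~\ref{prop: coming back} is $D=2C_z+C_0+Z$, with $C_0\in|C|$ a second fibre and $Z$ contracted to the vertex. Since $D_1\le D_2$, the multiplicity-one component $C_0$ must lie in $D_2$, so $D_2\ge C_z+C_0$. This gives $D_2^2=K_SD_2-4\ge 2K_SC-4\ge 6$, hence $D_1^2\le 2$, and part (iii) then forces $D_1=C_z$. So the subcase $D_1\neq C_z$ is actually empty.

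The paper then obtains $K_S=3C$ from $D_2^2\ge 2K_SC-4=8$ and the equality case of the index theorem. You instead use $K_S^2=18+K_SZ'$ and negative definiteness on $K_S^\perp$. Both arguments are fine.
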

\begin{proof}
We consider the canonical divisor $D=D_1+D_2$ from Proposition \ref{prop: coming back}. Then  $D_1D_2$ is either $2$ or $4$. In addition $C_z\le D_1$, so $K_SD_1\ge K_SC_z\ge 5$. 
If $D_1D_2=2$,  then
$5 \le K_SD_1=D_1^2+D_1D_2$ gives $D^2_1 \ge 3$, contradicting $D_1^2 \le D_2^2$ (Proposition \ref{prop: coming back}) and the index theorem.
So $D_1D_2=4$.

Then $5(p_g-2) \le K_S^2 =D_1^2+D_2^2+8$, so  $D_1^2+D_2^2 \ge 5p_g-18$. 
By the index theorem $D_1^2 D_2^2 \le 16$.

Assuming $p_g \ge 6$ this forces  $D_1^2=1$ and then $D_1=C_z$ by Proposition \ref{prop: coming back}. It follows $C^2=1$, $K_SC=(D_1+D_2)D_1=5$. We have proved (i).

If $p_g=5$, then $\Sigma$ has degree $3$  and by construction $D=2C_z+C_0+Z$ where $C_0$ is an element of $|C|$ and $Z \ge 0$ is contracted by $\varphi$ to the vertex of $\Sigma$. Since $D_1 \le D_2$ then $D_2 \geq C_z+C_0$. Since $K_S$ is nef
\[
D_2^2 =K_SD_2-D_1D_2=K_SD_2-4\ge 2K_SC-4\ge 10-4=6.
\]
By the index theorem it follows $D_1^2 \le \left\lfloor \frac{16}6 \right\rfloor=2$ and then $D_1=C_z$ by Proposition \ref{prop: coming back}. So either $C^2=1$, $K_SC=5$ or $C^2=2$, $K_SC=6$.

Finally, if $C^2=2$ then $K_SD_2\geq 2K_SC= 12$  implies $D_2^2 \ge 8$, and then    by the index theorem $D_2$ is numerically equivalent to $2C$, and then $K_S$ is numerically equivalent to $3C$. So the effective divisor  $Z$ is numerically equivalent to $0$ and so $Z=0$, which concludes the proof.
\end{proof}

\section{The linear systems  \texorpdfstring{$|mC|$}{|mC|}}
\label{sec: exclusion of d=7}

In this section we collect most of the technical results needed to prove  points  (iv) and (v) of Theorem \ref{thm: 42}.

Throughout all the section we will make the following:
\begin{assu-nota}\label{assu: C21}
 $S$ is a minimal surface of general type with  an irreducible  pencil  $|C|$ such that  $C^2=1$. In these assumptions the base locus  of  $|C|$  consists of  a simple base point that we denote by $p$.
 \end{assu-nota}
 \begin{lem}\label{lem: h^0}
Let $m>0 $ be an integer. 
Then:
\begin{enumerate}
\item $h^0(mC)\ge m+1$;
\item if $h^0(mC)\ge m+2$, then $h^0((m+h)C)\ge m+2h+2$ for  all integers $h\ge 0$.
\end{enumerate} 
\end{lem}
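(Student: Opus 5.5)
Part (i) follows from the pencil structure. The two sections $s_0,s_1$ spanning $H^0(C)$ give the monomials $s_0^a s_1^{m-a}$, $a=0,\dots,m$, in $H^0(mC)$. These are linearly independent: a nontrivial relation would be a homogeneous polynomial of degree $m$ in $s_0,s_1$ vanishing on $S$. Such a polynomial factors into linear forms, so one of the sections $\lambda s_0+\mu s_1$ would vanish identically, which is absurd. Equivalently, $|C|$ is composed with the map $f\colon S\dashrightarrow\pp^1$, $|mC|\supseteq f^*|\OO_{\pp^1}(m)|$, and $f^*$ is injective on sections. Hence $h^0(mC)\ge m+1$.

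For (ii), assume $h^0(mC)\ge m+2$, so there is $\sigma\in H^0(mC)$ not in the image $V_m:=\Sym^m H^0(C)$. I would prove $h^0((m+h)C)\ge m+2h+2$ by exhibiting enough independent sections. Take $V_{m+h}=\Sym^{m+h}H^0(C)$, of dimension $m+h+1$, together with the subspace $\sigma\cdot V_h$, of dimension $h+1$. If their sum is direct, we get $m+2h+2$. The claim to check is $V_{m+h}\cap \sigma V_h=0$. Suppose $\sigma t = u$ with $0\ne t\in V_h$ and $u\in V_{m+h}$. In terms of rational functions, writing everything relative to $s_0^{m+h}$, we get $\sigma/s_0^m = u'/t'$ with $u',t'$ polynomials in $x:=s_1/s_0$, a rational function pulled back from $\pp^1$. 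So $\sigma/s_0^m = f^*\rho$ for some rational function $\rho$ on $\pp^1$. Its divisor is then $f^*(\mathrm{div}\,\rho)$, and $\sigma$ effective means $\mathrm{div}(f^*\rho)+mC\ge0$, where $C=f^*(\infty)$ plus base-point contributions.

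The key step is to show that this forces $\mathrm{div}\rho+m\cdot\infty\ge 0$ on $\pp^1$, so that $\rho$ is a polynomial of degree $\le m$ in $x$ and $\sigma\in V_m$, a contradiction. On the blow-up $\h S$ at $p$, $\h C$ is a fibre and $\epsi^*C=\h C+E$. Pulling back, $\epsi^*(mC)=m\h C+mE$, and the polar part of $f^*\rho$ lies in fibres, not in $E$, which is a section of $\h f$. Hence poles of $\h f^*\rho$ along fibres must be bounded by $m\h C$ at $\infty$ and by $0$ elsewhere. This gives $\mathrm{div}\rho\ge -m\infty$. Fibres being reduced and irreducible for general points (the pencil is irreducible) is all that is needed, since the multiplicity of a pole of $f^*\rho$ along the fibre over $q$ is (order of pole of $\rho$ at $q$) times (fibre multiplicity $\ge1$).

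The main obstacle is this last step, handled cleanly via $\epsi$. With it, $\sigma\notin V_m$ implies the sum is direct, giving the bound. The case $h=0$ is just the hypothesis.
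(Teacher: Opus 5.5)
Your proof is correct, but it takes a different route from the paper's.

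The paper fixes a member $C$ and studies the ranks $\rho_k$ of the restriction maps $r_k\colon H^0(kC)\to H^0(\OO_C(kp))$. For a section $s\in H^0(C)$ cutting out $p$ on $C$, one has $\rho_k\ge 1$ because $r_k(s^k)\ne 0$, and $\rho_k$ is nondecreasing because multiplication by $\bar s=r_1(s)$ embeds $\IM r_k$ into $\IM r_{k+1}$. Since $h^0(mC)=1+\sum_{k\le m}\rho_k$, (i) is immediate. For (ii), $h^0(mC)\ge m+2$ forces $\rho_m\ge 2$, hence $\rho_k\ge 2$ for all $k\ge m$.

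You argue globally instead. You show that the subalgebra $\bigoplus_k V_k=f^*\bigoplus_k H^0(\OO_{\pp^1}(k))$ is saturated in the section ring: a section of $mC$ that is a ratio of two of its elements already lies in $V_m$. This yields the explicit direct sum $V_{m+h}\oplus\sigma V_h$.

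What each buys: the paper's method gives finer information, namely the increments $\rho_k$ and the fact that $h^0(mC)=m+1$ exactly when $\rho_k=1$ for all $k\le m$. It is also phrased as restriction to $C$ at $p$, which is the language used later for gap sequences and in Proposition~\ref{prop: h^0}. Yours exhibits the independent sections explicitly and makes clear that only the absence of fixed components of the pencil matters.

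Two small remarks on your write-up:
\begin{enumerate}
\item The blow-up is unnecessary. On $S$ the zero divisor of $\sigma$ is $\sum_q\mathrm{ord}_q(\rho)\,C_q+mC_\infty$, with $C_q$ the member of $|C|$ over $q$. Distinct members of a pencil without fixed part have no common component, so effectivity gives $\mathrm{ord}_q\rho\ge 0$ for $q\ne\infty$ and $\mathrm{ord}_\infty\rho\ge -m$ componentwise.
\item Your appeal to general fibres being reduced and irreducible is beside the point, since the poles of $\rho$ may lie under special fibres. What you actually use, correctly, in the next clause is only that every component of every fibre occurs with positive multiplicity.
\end{enumerate}
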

\begin{proof}
(i)  Fix  $C\in |C|$ and an integer $k>0$,   and consider the exact sequence:
\begin{equation}\label{eq: restrict C}
0\to H^0((k-1)C)\to H^0(kC)\overset{r_k}{\longrightarrow} H^0(\OO_C(kp)).
\end{equation}
 Let $\sigma\in H^0(C)$ be a section that cuts out $p$ on $C$. Then $r_k(\sigma^k)\ne 0$, hence the rank  $\rho_k$  of $r_k$ is $>0$ for all $k>0$. Set  $\bar\sigma:=r_1(\sigma)\in H^0(\OO_C(p))$; multiplication by $\bar\sigma$ gives an injection  $\IM r_k\hookrightarrow \IM r_{k+1}$, hence $\rho_k$ is a non decreasing function of $k\ge 1$.
Since $h^0(mC)=1+\sum_1^m\rho_k$, we have $h^0(mC)\ge m+1$ with equality holding if and only if  $\rho_k=1$ for all $1\le k \le m$. In particular this proves (i).
\medskip

(ii)  If $h^0(mC)\ge m+2$, then  the above remarks imply  $\rho_m\ge 2$ and $\rho_k\ge 2$ for all $k\ge m$, hence $h^0((m+h)C)=h^0(mC)+\sum_{j=1}^h \rho_{m+j}\ge m+2+2h$. 
\end{proof}

\begin{prop}\label{prop: h^0} 
In the above setup, let $m_0>0$ be an integer  such that   $h^0(\OO_C(m_0p))=2$ and $h^0(\OO_C((m_0-1)p))=1$ for all curves $C\in |C|$.  Then $$h^0(m_0C)=m_0+2.$$
\end{prop}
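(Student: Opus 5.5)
The plan is to reduce the statement to a computation with direct images on $\pp^1$, after blowing up the base point $p$. First the upper bound, which is formal from the proof of Lemma \ref{lem: h^0}. There $h^0(m_0C)=1+\sum_{k=1}^{m_0}\rho_k$, where $\rho_k$ is the rank of $r_k\colon H^0(kC)\to H^0(\OO_C(kp))$. For $k\le m_0-1$ we have $1\le h^0(\OO_C(kp))\le h^0(\OO_C((m_0-1)p))=1$, so $\rho_k=1$. Also $\rho_{m_0}\le h^0(\OO_C(m_0p))=2$. Hence $h^0(m_0C)\le m_0+2$, and equality amounts to $\rho_{m_0}=2$. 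Since $H^0(\OO_C(m_0p))$ is only $2$-dimensional, a direct fiberwise argument seems awkward, so I would use that the hypothesis holds for \emph{every} $C\in|C|$.

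Let $\epsi\colon \h S\to S$ be the blow-up at $p$, with exceptional curve $E$. Let $f\colon\h S\to\pp^1$ be the induced fibration, whose fibres $F$ are the strict transforms of the curves of $|C|$. Then $E$ is a section of $f$, $E^2=-1$, $\epsi^*C=F+E$, and $\OO_F(E)=\OO_F(p)$. Thus
\[
H^0(S,m_0C)=H^0(\h S,m_0F+m_0E)=H^0(\pp^1,V(m_0)),\qquad V:=f_*\OO_{\h S}(m_0E).
\]
By hypothesis $h^0(\OO_F(kE))$ is independent of the fibre for $k=m_0-1,m_0$. By Grauert's theorem, $V$ is then locally free of rank $2$ and commutes with base change, and $f_*\OO_{\h S}((m_0-1)E)$ is a line bundle commuting with base change. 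The latter has the nowhere vanishing section $1$ (the constants on each fibre), so it is $\OO_{\pp^1}$.

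Now I would push forward $0\to\OO((m_0-1)E)\to\OO(m_0E)\to\OO_E(m_0E)\to 0$. Since $E\cong\pp^1$ maps isomorphically to the base, $f_*\OO_E(m_0E)=\OO_{\pp^1}(m_0E^2)=\OO_{\pp^1}(-m_0)$. This gives
\[
0\to\OO_{\pp^1}\to V\overset{\beta}{\longrightarrow}\OO_{\pp^1}(-m_0).
\]
The key step is to show that $\beta$ is surjective. By base change, the fibre of $\beta$ at a point of $\pp^1$ is the map $H^0(\OO_F(m_0p))\to\OO_F(m_0p)_{|p}$. This map is nonzero because $h^0(\OO_F(m_0p))>h^0(\OO_F((m_0-1)p))$ on every fibre, i.e. there is a section with a pole of order exactly $m_0$ at $p$. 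A map of vector bundles onto a line bundle that is nonzero on every fibre is surjective, so $0\to\OO\to V\to\OO(-m_0)\to0$ is exact.

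Twisting by $\OO(m_0)$ and using $H^1(\OO_{\pp^1}(m_0))=0$ gives
\[
h^0(V(m_0))=h^0(\OO_{\pp^1}(m_0))+h^0(\OO_{\pp^1})=m_0+2,
\]
which is the claim. The main care point is this surjectivity and the base-change bookkeeping, which is exactly where the assumption "for all curves $C\in|C|$" is used. If the hypothesis held only generically, $\beta$ could vanish at some points, and $h^0$ would drop.
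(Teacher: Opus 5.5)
Your proof is correct and follows essentially the same route as the paper: blow up $p$, push forward $0\to\OO((m_0-1)E)\to\OO(m_0E)\to\OO_E(m_0E)\to0$ to $\pp^1$, show that $0\to\OO_{\pp^1}\to V\to\OO_{\pp^1}(-m_0)\to0$ is exact, and twist by $\OO_{\pp^1}(m_0)$. The only difference is how exactness on the right is obtained. You show $\beta$ is surjective fibrewise, using base change for $f_*$. The paper instead uses that $R^1f_*\OO((m_0-1)E)\to R^1f_*\OO(m_0E)$ is a surjection of locally free sheaves of equal rank, hence an isomorphism. Your preliminary upper bound via Lemma \ref{lem: h^0} is not needed.
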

\begin{proof} 
Let $\epsilon \colon \tilde{S} \rightarrow S$ be the blow up of $S$ at $p$, and let $E$ be the exceptional curve. Let $\tilde{f} \colon \tilde{S} \rightarrow {\mathbb P}^1$ be the fibration induced by $|C|$, so that $C$  pulls back to $E+F$, $F$ being a fibre of $\tilde{f} $. Note that each fibre $F$ is isomorphic to the corresponding curve $C$ and $E$ cuts on $F$ the point mapping to $p$.

Applying $\tilde{f}_*$ to the exact sequence
\[
0
\rightarrow
{\mathcal O}_{\tilde{S}}((m-1)E)
\rightarrow
{\mathcal O}_{\tilde{S}}(mE)
\rightarrow
{\mathcal O}_{E}(mE)
\rightarrow
0
\]
we obtain, for all $m \in {\mathbb N}$, the exact sequence
\begin{multline}\label{eqn: 5 terms}
0
\rightarrow
\tilde{f}_* ({\mathcal O}_{\tilde{S}}((m-1)E))
\rightarrow
\tilde{f}_*({\mathcal O}_{\tilde{S}}(mE))
\rightarrow
{\mathcal O}_{{\mathbb P}^1}(-m)
\rightarrow\\
\rightarrow
R^1\tilde{f}_* ({\mathcal O}_{\tilde{S}}((m-1)E))
\rightarrow
R^1\tilde{f}_*({\mathcal O}_{\tilde{S}}(mE))
\rightarrow
0
\end{multline}

By Riemann-Roch  the assumptions imply that $h^1(\OO_F((m_0-1)p)$ is equal to $h^1(\OO_F(m_0p))$ and  is independent of the fiber $F$.  Therefore the sheaves $R^1\tilde{f}_* ({\mathcal O}_{\tilde{S}}((m_0-1)E))$ and $R^1\tilde{f}_*({\mathcal O}_{\tilde{S}}(m_0E))$ are locally free  of the same rank. 
Recall that a surjective morphism of locally free sheaves of the same rank is an isomorphism. Then the map $R^1\tilde{f}_* ({\mathcal O}_{\tilde{S}}((m_0-1)E))
\rightarrow
R^1\tilde{f}_*({\mathcal O}_{\tilde{S}}(m_0E))
$ is an isomorphism and we have a short exact sequence
\begin{equation*}
0
\rightarrow
\tilde{f}_* ({\mathcal O}_{\tilde{S}}((m_0-1)E))
\rightarrow
\tilde{f}_*({\mathcal O}_{\tilde{S}}(m_0E))
\rightarrow
{\mathcal O}_{{\mathbb P}^1}(-m_0)
\rightarrow
0
\end{equation*}
Note that $\tilde{f}_* ({\mathcal O}_{\tilde{S}}((m_0-1)E)) \cong {\mathcal O}_{{\mathbb P}^1}$, because it is a line bundle on ${\mathbb P}^1$ whose space of global sections has dimension $1$, and then the above short exact sequence becomes 
\begin{equation*}\label{eqn: 3 terms}
0
\rightarrow
{\mathcal O}_{{\mathbb P}^1}
\rightarrow
\tilde{f}_*({\mathcal O}_{\tilde{S}}(m_0E))
\rightarrow
{\mathcal O}_{{\mathbb P}^1}(-m_0)
\rightarrow
0
\end{equation*}
which  splits, because $h^1(\OO_{\pp^1}(m_0))=0$.  So 
\begin{equation}\label{eq: m0}
\tilde{f}_*({\mathcal O}_{\tilde{S}}(m_0E))\cong {\mathcal O}_{{\mathbb P}^1} \oplus {{\mathcal O}_{{\mathbb P}^1}}(-m_0).
\end{equation}

By \eqref{eq: m0} 
$h^0({\mathcal O}_S(m_0C))=
h^0({\mathcal O}_{\tilde{S}}( m_0(E+F)))=
h^0(\tilde{f}_*{\mathcal O}_{\tilde{S}}( m_0(E+F)))=
h^0({\mathcal O}_{\mathbb P^1} (m_0) \oplus {\mathcal O}_{\mathbb P^1})=m_0+2$.
\end{proof}

We note the following consequence of Proposition \ref{prop: h^0}:
\begin{cor} \label{cor: 3p}
If  $K_SC=5$, $h^0(\OO_C(K_S-3C))>0$ and   $h^0(\OO_C(3p))=1$ for  every curve $C\in |C|$, then $h^0(4C)=6$.
\end{cor}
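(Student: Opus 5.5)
The plan is to apply Proposition \ref{prop: h^0} with $m_0=4$. By that proposition, it suffices to check that every $C\in|C|$ satisfies $h^0(\OO_C(3p))=1$ and $h^0(\OO_C(4p))=2$. The first condition is part of the hypotheses, so only the second needs work. It follows from Riemann--Roch on $C$ together with the hypothesis $h^0(\OO_C(K_S-3C))>0$.

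First I compute the invariants of $C$. By Assumption--Notation \ref{assu: C21}, $C^2=1$ and the base point $p$ is simple, so every $C\in|C|$ satisfies $\OO_C(C)\cong\OO_C(p)$. Each such $C$ is a Gorenstein curve with $h^0(\OO_C)=1$: this is the setting used in Proposition \ref{prop: h^0}, where $C$ is identified with a fibre of $\tilde f$. Adjunction gives $\omega_C\cong\OO_C(K_S+C)\cong\OO_C(K_S)(p)$, which has degree $K_SC+C^2=6$. Hence $p_a(C)=4$.

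Next, for every $C\in|C|$, I apply Riemann--Roch and Serre duality to the degree-$4$ divisor $4p$ on this genus-$4$ curve:
$$h^0(\OO_C(4p))=4-4+1+h^0(\omega_C(-4p))=1+h^0(\OO_C(K_S)(-3p)).$$
Since $\OO_C(K_S-3C)\cong\OO_C(K_S)(-3p)$, the hypothesis $h^0(\OO_C(K_S-3C))>0$ gives $h^0(\OO_C(4p))\ge 2$. For the reverse inequality, $p$ is a smooth point of $C$, so $h^0(\OO_C(4p))\le h^0(\OO_C(3p))+1=2$. Therefore $h^0(\OO_C(4p))=2$.

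With $h^0(\OO_C(3p))=1$ and $h^0(\OO_C(4p))=2$ for every $C\in|C|$, Proposition \ref{prop: h^0} with $m_0=4$ yields $h^0(4C)=6$. I do not expect a serious obstacle. The one point needing care is that the hypothesis must hold for \emph{every} curve of the pencil, including singular or reducible members. It does, because the identifications $\OO_C(C)\cong\OO_C(p)$ and $\omega_C\cong\OO_C(K_S+C)$ hold for every member, and $p$ is a smooth point of each of them.
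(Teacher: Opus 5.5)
Your proof is correct and follows the paper's argument: Riemann--Roch on the genus-$4$ curve gives $h^0(\OO_C(4p))=1+h^0(\OO_C(K_S-3C))\ge 2$, the smoothness of $p$ gives $h^0(\OO_C(4p))\le h^0(\OO_C(3p))+1=2$, and then you apply Proposition \ref{prop: h^0} with $m_0=4$. The only difference is that the paper uses the effectivity of $(K_S-3C)|_C$ just for general $C$ and extends $h^0(\OO_C(4p))\ge 2$ to all members by semicontinuity, whereas you use that hypothesis for every member, which the statement as written allows.
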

\begin{proof} The curves of $|C|$ have genus 4 by the adjunction formula. If  $C\in |C|$ is general then  $K_C-4p=(K_S-3C)|_C$ is effective,  hence $h^0(\OO_C(4p))\ge 2$ by Riemann--Roch. By semicontinuity $h^0(\OO_C(4p))\ge 2$ for every $C\in |C|$. 
On the other hand $h^0(\OO_C(4p))\le h^0(\OO_C(3p))+1\le 2$ for every $C\in |C|$, so  $h^0(\OO_C(4p))=2$ for every $C$ and Proposition  \ref{prop: h^0} gives $h^0(4C)=6$.
\end{proof}
 
 The next result is crucial in the proof of points (iv) and (v) of Theorem \ref{thm: 42}. 
 \begin{prop}\label{prop: general}
Assume that $p$ is not a base point of $|K_S|$ and that for every $C\in |C|$ one has $K_S|_C=\OO_C(dp)$, where  $d=5$ or $d=7$. 
 Then $$h^0\left (\frac {d+3}{2}C\right)\ge \frac{d+7}{2}.$$
\end{prop}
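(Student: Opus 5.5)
The plan is to reduce the claim to Proposition~\ref{prop: h^0} together with Lemma~\ref{lem: h^0}(ii), using the Weierstra\ss\ point structure of $p$ on the curves of $|C|$.

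First, the numerics. By adjunction $2g(C)-2=K_SC+C^2=d+1$, so $g:=g(C)=\frac{d+3}{2}$, that is $g=4$ for $d=5$ and $g=5$ for $d=7$. Moreover, for every $C\in|C|$,
$$\omega_C=(K_S+C)|_C=\OO_C((d+1)p)=\OO_C((2g-2)p).$$
Every $C\in|C|$ is a Gorenstein curve. It is $1$-connected, since $C^2=1$ and the pencil is irreducible, so $h^0(\OO_C)=1$. Hence Lemma~\ref{lemma: gaps come in pairs} and Lemma~\ref{lem: semigroup} apply: $1$ and $2g-1$ are gaps, there is exactly one gap in each pair $\{k,2g-1-k\}$, and the non-gaps form a semigroup. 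In particular, on each $C$ the smallest non-gap $m(C)$ satisfies $2\le m(C)\le g$.

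Second, the reduction. By Lemma~\ref{lem: h^0}(ii), it suffices to find one integer $m_0\le g$ with $h^0(m_0C)\ge m_0+2$, because then
$$h^0(gC)\ge m_0+2+2(g-m_0)\ge g+2=\tfrac{d+7}{2}.$$
If $m(C)$ is the same integer $m_0$ for all $C\in|C|$, Proposition~\ref{prop: h^0} gives $h^0(m_0C)=m_0+2$ and we are done. So the real content is the case where $m(C)$ jumps on special fibres.

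Third, the jumping case. I would list the admissible gap sequences for $g=4,5$ (for $g=4$ these are $\{1,2,3,7\}$, $\{1,2,5,7\}$ and $\{1,3,5,7\}$). By semicontinuity, $h^0(\OO_C(mp))\ge2$ holds for every $C$ as soon as it holds for the general one. The issue is that $h^0(\OO_C((m-1)p))$ may jump up on special fibres, so $R^1\tilde f_*\OO_{\tilde S}(mE)$ acquires torsion and the five-term sequence \eqref{eqn: 5 terms} no longer yields the splitting \eqref{eq: m0}.

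Here I would use that $\OO_{\tilde S}((g-1)E)$ restricts on every fibre $F$ to the theta characteristic $\OO_F((g-1)p)$, since $2(g-1)p=\omega_F$. Theorem~\ref{thm: parity of theta} forces $h^0(\OO_F((g-1)p))$ to have constant parity. Theorem~\ref{thm: infinitesimal}(ii) says the torsion of $R^1\tilde f_*\OO_{\tilde S}((g-1)E)$ has the form $\mathcal T\oplus\mathcal T$. Dually, via Riemann--Roch on the fibres, this gives the symmetric statement for $m\mapsto 2g-1-m$.

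I would then track the degrees of the vector bundles $\tilde f_*\OO_{\tilde S}(mE)$ for $m\le g$ along the sequences \eqref{eqn: 5 terms}. Jumps are paired by the torsion structure, and the parity constraint forbids the "odd" jumps. These two facts should show that $\tilde f_*\OO_{\tilde S}(gE)$, or $\tilde f_*\OO_{\tilde S}(m_0E)$ for some $m_0\le g$, contains $\OO_{\pp^1}\oplus\OO_{\pp^1}(-a)$ with $a\le m_0$. Twisting by $\OO_{\pp^1}(m_0)$ then gives $h^0(m_0C)\ge m_0+2$.

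The hypothesis that $p$ is not a base point of $|K_S|$ enters at one point. The restriction $H^0(K_S)\to H^0(\OO_C(dp))$ contains a section not vanishing at $p$. This should exclude the degenerate behaviour of the general fibre, for instance the hyperelliptic gap sequence $\{1,3,5,7\}$ with $2$ a non-gap everywhere, where the naive count fails. I would first try to use it to show that the general $m(C)$ equals the value forced by $K_S|_C=\OO_C(dp)$.

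I expect the main obstacle to be this torsion and parity bookkeeping on special fibres: converting Theorem~\ref{thm: infinitesimal} into a lower bound on the degree of the relevant subsheaf of $\tilde f_*\OO_{\tilde S}(mE)$ for $d=5$ and $d=7$, and checking case by case, over the finitely many gap sequences for $g=4,5$, that no combination of jumps defeats the count.
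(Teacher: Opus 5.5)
Your reduction is the paper's: if the smallest non-gap $m_0$ of $p$ is the same on every curve of $|C|$, Proposition \ref{prop: h^0} gives $h^0(m_0C)=m_0+2$, and Lemma \ref{lem: h^0}(ii) finishes. The gap is the ``jumping case''. You leave it as a programme (``these two facts should show\dots''), and you miss that the base-point hypothesis holds on \emph{every} curve of the pencil, not only the general one. A section of $K_S$ not vanishing at $p$ restricts, on each $C\in|C|$, to a section of $K_S|_C=\OO_C(dp)$ not vanishing at $p$. So $d=2g-3$ is a non-gap, and hence $2$ is a gap, on every $C$. Add Lemma \ref{lemma: gaps come in pairs} and the semigroup property; for $d=7$ the latter makes $3$ a gap, since $9$ is. Every fibre then has one of only two gap sets:
\begin{itemize}
\item for $d=5$, $\{1,2,3,7\}$ or $\{1,2,4,7\}$ (your $\{1,2,5,7\}$ is not admissible, since it has two gaps in the pair $\{2,5\}$);
\item for $d=7$, $\{1,2,3,4,9\}$ or $\{1,2,3,5,9\}$.
\end{itemize}
In each case the two sets give $h^0(\OO_C((g-1)p))=1$, resp.\ $2$, which have opposite parity. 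So Theorem \ref{thm: parity of theta} alone shows the gap set is constant. There is no jumping case, and Theorem \ref{thm: infinitesimal} plays no role; this is Lemmas \ref{lem: gaps for d=7} and \ref{lem: gaps for d=7 do not mix}.

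Conversely, your bookkeeping cannot succeed if the hypothesis is used only on the general fibre. Parity-preserving jumps are compatible with the torsion structure and do destroy the inequality. The proof of Proposition \ref{prop: no C^2=2} carries out exactly this computation for a (hypothetical) pencil $|G|$ with $G^2=1$ and $K_Y|_G=\OO_G(7y)$. There the general fibre has gap set $\{1,2,3,4,9\}$, so $7$ is generically a non-gap, and one fibre $G_0$ has $\{1,3,5,7,9\}$. Since $\tau_4\cong\tau\oplus\tau$ is not isomorphic to the cyclic $\tau_5$, one gets $\tilde g_*5E_y\cong\OO_{\pp^1}\oplus\OO_{\pp^1}(-5-a)$ with $a>0$, hence $h^0(5G)=6<7$. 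What excludes such a fibre is precisely that $7$ must be a non-gap on $G_0$ too, i.e.\ the hypothesis applied fibrewise.

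Separately, Theorem \ref{thm: parity of theta} needs reduced fibres, while the fibres of $\tilde f$ may contain multiple $(-2)$-curves. The paper first contracts the $(-2)$-curves contained in the fibres; these are exactly the components not through $p$, since every $C$ is smooth at $p$. It then applies the theorem to the resulting fibration, whose fibres are irreducible and reduced. Finally it checks, via $\pi_*\OO_F\cong\OO_{\bar F}$ and the projection formula, that $h^0(\OO_C(mp))$ is unchanged.
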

The proof    requires some intermediate results and  takes up the rest of the section. 

\begin{lem}\label{lem: gaps for d=7}
In the assumptions of Proposition \ref{prop: general} fix an element $C$ of the pencil $|C|$. Then:
\begin{enumerate}
\item if $d=5$ the set of gaps of $p$ in $C$  is either $\{1,2,3,7\}$ or $\{1,2,4,7\}$;
\item if $d=7$ the set of gaps of $p$ in $C$  is either $\{1,2,3,4,9\}$ or $\{1,2,3,5,9\}$.
\end{enumerate}
\end{lem}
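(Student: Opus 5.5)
We want the gap set of $p$ on a fixed $C\in|C|$. First we compute the genus and check that Lemma \ref{lemma: gaps come in pairs} applies. Since $C^2=1$ and $K_SC=d$ (because $K_S|_C=\OO_C(dp)$ has degree $d$), adjunction gives $\omega_C=(K_S+C)|_C=\OO_C((d+1)p)$, using $\OO_C(C)=\OO_C(p)$. Hence $2g-2=d+1$, i.e.\ $g=(d+3)/2$, so $g=4$ for $d=5$ and $g=5$ for $d=7$. Every $C\in|C|$ is a Gorenstein curve on $S$, and $p$ is a smooth point of $C$ because $C^2=1$ and $p$ is a simple base point. Moreover $h^0(\OO_C)=1$: the pencil is irreducible with $C^2=1$, so $C$ is 1-connected (if $C=A+B$ with $AB\le 0$, the Hodge index theorem with $C^2=1$ gives a contradiction). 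Lemma \ref{lemma: gaps come in pairs} then shows that $1$ and $2g-1=d+2$ are gaps, and that there is exactly one gap in each pair $\{k,2g-1-k\}$ for $2\le k\le g-1$.

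Next we use that $p$ is not a base point of $|K_S|$. We compute $h^0(\OO_C(dp))=h^0(K_S|_C)$. From $0\to\OO_S(K_S-C)\to\OO_S(K_S)\to K_S|_C\to 0$ and the fact that $p$ is not a base point of $|K_S|$, the restriction of $|K_S|$ to $C$ has a section not vanishing at $p$. So $dp$ is the divisor of a section of $\OO_C(dp)$ that does not vanish at $p$, and thus $\OO_C(dp)\cong\OO_C$ would follow. That is wrong, so the argument must be run differently: a section of $K_S$ whose restriction to $C$ is nonvanishing at $p$ gives a nonvanishing section of $\OO_C(dp)$. Read this way, the nonvanishing section shows that $h^0(\OO_C(dp))>h^0(\OO_C((d-1)p))$, so $d$ is a non-gap. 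Since $d=2g-3$, the gap in the pair $\{2,2g-3\}$ must be $2$.

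Now we finish with the semigroup property, Lemma \ref{lem: semigroup}. For $d=5$ ($g=4$) the gaps are $1,2,7$, together with one gap from the pair $\{3,4\}$; this yields $\{1,2,3,7\}$ or $\{1,2,4,7\}$. For $d=7$ ($g=5$) the gaps are $1,2,9$, plus one gap from each of $\{3,6\}$ and $\{4,5\}$. If $3$ were a non-gap, then $6=3+3$ would also be a non-gap, and the pair $\{3,6\}$ would contain no gap, which is impossible. So $3$ is a gap and $6$ is a non-gap. The remaining choice, between $4$ and $5$ as the gap, gives $\{1,2,3,4,9\}$ or $\{1,2,3,5,9\}$. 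Both options are consistent with the semigroup condition, since $6+k>9$ or the resulting sums are non-gaps. The only delicate step is proving that $d$ is a non-gap, which requires producing a section of $K_S$ not vanishing at $p$ and using $K_S|_C\cong\OO_C(dp)$.
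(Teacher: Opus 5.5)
Your proof is correct and follows the paper's argument. You check the hypotheses of Lemma \ref{lemma: gaps come in pairs}, deduce from $K_S|_C\cong\OO_C(dp)$ and $p$ not being a base point of $|K_S|$ that $d=2g-3$ is a non-gap (so $2$ is a gap), and use the semigroup property for $d=7$; the paper excludes $3$ as a non-gap via $3\mid 9$ rather than your $3+3=6$, an equivalent variation. Delete the false start in your second paragraph (the sentence ending in $\OO_C(dp)\cong\OO_C$): all you need is that a section of $\OO_C(dp)$ not vanishing at $p$ lies outside the image of $H^0(\OO_C((d-1)p))$.
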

\begin{proof}
(i) By Lemma \ref{lemma: gaps come in pairs}, if $d=5$ then  $p$ has $4$ gaps in $C$, the minimal one is $1$ and the maximal one is $7$ and there is exactly one gap in each of the pairs $\{2,5\}$ and $\{3, 4\}$.
Since  $K_S|_C=\OO_C(dp)$  and $p$ is not a base point  of $|K_S|$ by assumption,  the point  $p$ is not a base point of $|\OO_C(5p)|$ hence $2$ is  a gap. So the possible sets of gaps are $\{1,2,3,7\}$ and $\{1,2,4,7\}$.
\medskip

(ii) By Lemma \ref{lemma: gaps come in pairs} if $d=7$,  $p$ has $5$ gaps in $C$, the minimal one is $1$ and the maximal one is $9$. 
Since the complement of the set of gaps is a semigroup and $3$ divides $9$, $3$ is a gap too. Moreover the remaining two gaps are one in each pair  $\{2, 7\}$,  $\{4, 5\}$, and arguing as in case (i) one sees  that $2$ is a gap. So the set of gaps is either $\{1,2,3,4,9\}$ or 
$\{1,2,3,5,9\}$.
\end{proof}

By the invariance of the parity of  theta characteristics we observe
\begin{lem}\label{lem: gaps for d=7 do not mix}
In the assumptions of Proposition \ref{prop: general}  the set of gaps of $p$ in $C$ does not depend on the choice of the curve  in the pencil. 
\end{lem}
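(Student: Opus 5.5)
We use Theorem \ref{thm: parity of theta}, applied to a half-canonical line bundle on the fibres of the pencil. Let $\epsilon\colon\tilde S\to S$ be the blow-up at $p$, with exceptional curve $E$, and let $\tilde f\colon\tilde S\to\pp^1$ be the fibration induced by $|C|$, as in the proof of Proposition \ref{prop: h^0}. Each fibre $F$ is isomorphic to the corresponding $C\in|C|$, and $E$ meets $F$ transversally at the point corresponding to $p$.

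\textbf{Step 1 (half-canonical bundle).} The hypothesis $K_S|_C=\OO_C(dp)$ together with $C^2=1$ gives, by adjunction,
\[\omega_C=(K_S+C)|_C=\OO_C((d+1)p).\]
Since $d$ is odd, $(d+1)/2$ is an integer. Set $\mathcal L:=\OO_{\tilde S}(\frac{d+1}{2}E)$. For every fibre $F$ we have $\mathcal L|_F=\OO_F(\frac{d+1}2 p)$ and $\mathcal L|_F^{\otimes 2}\cong\omega_F$. The fibres of $\tilde f$ are reduced: $|C|$ is an irreducible pencil with $C^2=1$, so a non-reduced member $C=2A+\dots$ would force $C^2=1$ to be divisible in a way incompatible with parity. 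More carefully, $C\cdot K_S$ is odd and adjunction applied to the fibre components rules out multiple components. This is a point to check, and if it fails one can instead restrict to the open set of reduced fibres. Theorem \ref{thm: parity of theta} then shows that $h^0(\OO_C(\frac{d+1}2p))$ is constant modulo $2$ as $C$ varies in the pencil.

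\textbf{Step 2 (the two gap sequences have different parities).} For $d=5$ we have $(d+1)/2=3$. With gaps $\{1,2,3,7\}$ the non-gaps in $[1,3]$ are none, so $h^0(\OO_C(3p))=1$. With gaps $\{1,2,4,7\}$, $3$ is a non-gap, so $h^0(\OO_C(3p))=2$. For $d=7$ we have $(d+1)/2=4$. With gaps $\{1,2,3,4,9\}$ we get $h^0(\OO_C(4p))=1$, while with $\{1,2,3,5,9\}$, $4$ is a non-gap and $h^0=2$. In both cases the two possible gap sets listed in Lemma \ref{lem: gaps for d=7} are distinguished by the parity of $h^0(\OO_C(\frac{d+1}2p))$.

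\textbf{Step 3 (conclusion).} By Lemma \ref{lem: gaps for d=7}, every $C$ in the pencil has one of the two gap sets. The invariant parity from Step 1 selects the same one for all $C$, so the gap set is independent of $C$.

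The only delicate point is verifying the hypotheses of Theorem \ref{thm: parity of theta}: flatness, properness and a line bundle on the total space all hold for $\tilde f$, and reducedness of the fibres is the one to be justified. If reducedness is not available everywhere, the fallback is to note that the lemma only uses the existence of the gap sets from Lemma \ref{lem: gaps for d=7}, which already presupposes Lemma \ref{lemma: gaps come in pairs} for every fibre, and to apply the parity theorem on the base. Alternatively, one can use Theorem \ref{thm: infinitesimal} at non-reduced fibres.
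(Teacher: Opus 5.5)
Your Steps 2 and 3 are exactly the paper's argument, and your identification $\omega_F\cong\OO_F((d+1)E)$ is correct. The genuine gap is the reducedness of the fibres, which you need in order to invoke Theorem \ref{thm: parity of theta}. This is precisely the point the paper has to work for, and your justification does not hold.

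Write $C=C_p+\sum m_iA_i$, where $C_p$ is the component through $p$. Then $C_p$ is reduced, since every member is smooth at the simple base point $p$. Every other component satisfies $CA_i=0$ and $K_SA_i=\deg\OO_C(dp)|_{A_i}=0$, so it is a $(-2)$-curve. Because $K_SA_i=0$, the oddness of $K_SC=d$ and adjunction say nothing about the multiplicities $m_i$. For instance, $C=C_p+2A_0+A_1+A_2+A_3$ is numerically consistent ($C^2=1$, $CA_i=0$), with the $A_i$ forming a $D_4$ configuration centred at $A_0$ and $C_p$ meeting only $A_0$, once, with $C_p^2=-1$.

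Your fallbacks do not close this gap. Restricting to the open set of reduced fibres gives constant parity only there. The lemma, and its later use through Proposition \ref{prop: h^0}, whose hypothesis concerns every $C\in|C|$, needs all members, and the special fibres are exactly where a jump could occur (compare $G_0$ in Lemma \ref{lem: G_0}). Theorem \ref{thm: infinitesimal} is stated under the same reducedness hypothesis and concerns the thickenings $kC_t$, not fibres with multiple components.

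The missing idea is as follows. The $(-2)$-curves in the fibres avoid $p$, hence are disjoint from $E$, so you can contract them via $\pi\colon\tilde S\to\bar S$.
\begin{itemize}
\item $\bar S$ has only rational double points, and the induced $\bar f\colon\bar S\to\pp^1$ has integral fibres $\bar F=\pi(F)$.
\item $\pi(E)$ lies in the smooth locus, so $\OO_{\bar S}(\frac{d+1}{2}\pi(E))$ is a line bundle whose square restricts to $\omega_{\bar F}$, and Harris' theorem applies to $\bar f$.
\item Since the singularities are rational, $R^1\pi_*\OO_{\tilde S}(-F)=R^1\pi_*\OO_{\tilde S}\otimes\OO_{\bar S}(-\bar F)=0$, which gives $\pi_*\OO_F=\OO_{\bar F}$.
\item By the projection formula, $h^0(\OO_{\bar F}(\frac{d+1}{2}\pi(E)))=h^0(\OO_C(\frac{d+1}{2}p))$.
\end{itemize}
This last equality carries the constant parity back to every curve of the pencil.
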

\begin{proof}
Consider again the blow up  $\epsilon \colon \tilde{S} \rightarrow S$ of $S$ at $p$. Let $E$ be the exceptional divisor and let  $\tilde{f} \colon \tilde{S} \rightarrow {\mathbb P}^1$ be the fibration induced by $|C|$, so that $C$  pulls back to $E+F$, where $F$ is a fibre of $\tilde{f} $. The map $\epsilon$ induces an isomorphism of each fibre $F$ with the corresponding curve $C$ and $E$ cuts on $F$ the point mapping to $p$.

For $d=5$,  in the two cases of  Lemma \ref{lem: gaps for d=7}, the theta characteristic ${\mathcal O}_C(3p)$ has different parity, since  $h^0({\mathcal O}_C(3p))=1$  or $2$ respectively. Similarly, for $d=7$
 the theta characteristic ${\mathcal O}_C(4p)$ has different parity  for the two possible sets of gaps.
 
We cannot conclude just applying Harris Theorem  \ref{thm: parity of theta} to $\tilde f$, because the fibres of $\tilde{f}$ may a priori have irreducible components that are not reduced. We solve this issue by contracting some $(-2)$-curves of $S$ so that we have a fibration with irreducible fibers, as follows.
Every $C$ has a distinguished component $C_p$, the one containing $p$ (as a smooth point!), that is reduced and satisfies $K_SC_p=d$. All the remaining components $A$ of $C$   do not go through $p$, so they satisfy $K_SA=0$ and are  $(-2)$-curves that pull back to $(-2)$-curves of $\tilde S$ contained in the fibers of $\tilde f$. It is immediate to check that, since $S$ is minimal, all the $(-2)$-curves contracted by $\tilde f$ arise in this way. 

By contracting the $(-2)$-curves in the fibers of  $\tilde{S}$, we obtain a surface $\bar{S}$ and a commutative diagram
\[
\xymatrix
   { 
   \tilde{S}\ar[rr]^\pi \ar[dr]_{\tilde{f}}  & & \bar{S}\ar[dl]^{\bar{f}} \\
      & {\mathbb P}^1 & \\
   }
\]
The fibres $\bar{F}=\pi(F)$ of $\bar{f}$ are all irreducible and reduced, images of the distinguished components $C_p$. So we can apply Harris Theorem  \ref{thm: parity of theta} to $\bar{f}$ obtaining that the parity of $h^0({\mathcal O}_{\bar{F}}(4 \pi(E)))$ does not depend on the fibre.

We conclude the proof by showing that $h^0({\mathcal O}_{\bar{F}}(4\pi(E)))$ equals $h^0({\mathcal O}_{C}(4p))$ for the corresponding curve $C$ in the pencil. 

Fix a fibre $F$ and then its image $\bar{F}=\pi(F)$. Applying the functor $\pi_*$ to the exact sequence 
\[
0 
\rightarrow 
{\mathcal O}_{\tilde{S}}(-F)
\rightarrow 
{\mathcal O}_{\tilde{S}}
\rightarrow 
{\mathcal O}_{F}
\rightarrow 
0
\]
we obtain the exact sequence
\begin{equation}
\label{eq:pi*ES}
0 
\rightarrow 
\pi_*{\mathcal O}_{\tilde{S}}(-F)
\rightarrow 
\pi_*{\mathcal O}_{\tilde{S}}
\rightarrow 
\pi_*{\mathcal O}_{F}
\rightarrow 
R^1\pi_*{\mathcal O}_{\tilde{S}}(-F)=R^1\pi_*{\OO_{\tilde S}}\otimes \OO_{\pp^1}(-1)
\end{equation}
Note that $R^1\pi_*{\mathcal O}_{\tilde{S}}$ vanishes because the singularities of $\bar{S}$ are rational by construction and $\pi$ is the minimal resolution. Since $F=\pi^*\bar F$, applying the projection formula we rewrite \eqref{eq:pi*ES} as
\[
0 
\rightarrow 
{\mathcal O}_{\bar{S}}(-\bar{F})
\rightarrow 
{\mathcal O}_{\bar{S}}
\rightarrow 
\pi_*{\mathcal O}_{F}
\rightarrow 
0
\]
from which
\[
\pi_*{\mathcal O}_{F} \cong {\mathcal O}_{\bar{F}}
\]
Then by the projection formula $\pi_* {\mathcal O}_{F}(4E)=\pi_*\OO_F(\pi^*(4\pi(E)) \cong {\mathcal O}_{\bar{F}}(4\pi(E))$.

 The proof is completed by remarking that the map 
 $\epsilon$ induces isomorphisms ${\mathcal O}_{C}(4p) \cong {\mathcal O}_{F}(4E)$. 
\end{proof}

\begin{proof}[Proof of Proposition \ref{prop: general}] By Lemma   \ref{lem: gaps for d=7 do not mix},
  the set of gaps of $p$ in $C$ does not depend on the choice of the curve  in the pencil, so if $m_0$ is  the smallest non gap then $h^0(m_0D)=m_0+2$  by Proposition \ref{prop: h^0}. Since  $m_0\le \frac{d+3}{2}$ by Riemann-Roch, Proposition \ref{prop: h^0} also gives the inequality $h^0(\frac {d+3}{2}C)\ge \frac{d+7}{2}$.
  \end{proof}
 
\section{Proof of Theorem \ref{thm: 42}, (iv) and (v)}
By Proposition \ref{prop: max-case} and Proposition \ref{prop: cases5} the possible cases for $d=7$ and $p_g=9$ and for   $d=5$  and $p_g\ge 5$  are the following:
\begin{itemize}
\item $d=7$, $p_g=9$, $K_SC=7$, $C^2=1$;
\item $d=5$, $p_g=7$, $K_SC=5$, $C^2=1$;
\item $d=5$, $p_g=6$, $K_SC=5$, $C^2=1$;
\item $d=5$, $p_g=5$, $K_SC=6$, $C^2=2$;
\item $d=5$, $p_g=5$, $K_SC=5$, $C^2=1$.
\end{itemize}
In order to complete the proof of Theorem \ref{thm: 42} we have to rule out all possibilities except  the last one.
We start with the cases where $p_g$ has the maximum value $d+2$.

\begin{prop}\label{prop: no d=5,7}
The cases $d=5$, $p_g=7$ and  $d=7$, $p_g=9$ of Proposition \ref{prop: max-case} do not occur.
\end{prop}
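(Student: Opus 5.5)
We assume $p_g=d+2$ with $d\in\{5,7\}$ and aim for a contradiction. By Proposition \ref{prop: max-case}(iv), $|K_S|$ is base point free, $C^2=1$ and $K_S=dC$. So we are in Assumption--Notation \ref{assu: C21}, with the base point $p$ of $|C|$ not a base point of $|K_S|$. Moreover, for every $C\in|C|$ we have $K_S|_C=\OO_C(dC)=\OO_C(dp)$, because $C|_C=\OO_C(p)$. This is exactly the hypothesis of Proposition \ref{prop: general}, which gives
\[
h^0\left(\tfrac{d+3}{2}C\right)\ge \tfrac{d+7}{2}.
\]
Write $m:=\frac{d+3}{2}$, so $m=4$ for $d=5$ and $m=5$ for $d=7$. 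The bound says $h^0(mC)\ge m+2$.

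Next we apply Lemma \ref{lem: h^0}(ii) with $h=d-m=\frac{d-3}{2}$. This gives
\[
p_g(S)=h^0(dC)\ge m+2+2h+... = m+2h+2 = \tfrac{d+3}{2}+(d-3)+2 = \tfrac{3d+1}{2}.
\]
For $d=5$ this is $8$, and for $d=7$ it is $11$. In both cases the bound exceeds $p_g=d+2$ (which is $7$ and $9$ respectively), a contradiction.

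Before concluding, we check the arithmetic and the hypotheses of Lemma \ref{lem: h^0}. That lemma requires only that the pencil be irreducible with $C^2=1$. Irreducibility holds because $|C|$ is the pencil of strict transforms of the ruling lines, so its general member is irreducible. The last thing to verify is that the hypothesis "$p$ not a base point of $|K_S|$" in Proposition \ref{prop: general} really holds; this follows from the base point freeness of $|K_S|$.

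The step carrying all the real content is Proposition \ref{prop: general}, which relies on the parity-of-theta argument; here we use it as a black box. One consistency point is worth a look: $p_g=h^0(dC)$ requires $K_S=dC$ as an actual linear equivalence, not merely numerically. Proposition \ref{prop: max-case}(iv) provides exactly this, since there $M=d\h C$ and $\h S=S$.
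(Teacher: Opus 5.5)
Your proposal is correct and matches the paper's proof. Proposition~\ref{prop: max-case}(iv) gives $C^2=1$, $K_S=dC$ and $|K_S|$ base point free, so Proposition~\ref{prop: general} yields $h^0(\frac{d+3}{2}C)\ge\frac{d+7}{2}$, and Lemma~\ref{lem: h^0}(ii) with $h=\frac{d-3}{2}$ then forces $p_g=h^0(dC)\ge 8$, resp.\ $11$, contradicting $p_g=7$, resp.\ $9$. (Just delete the stray ``$+\dots$'' in your displayed inequality.)
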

\begin{proof}
By Proposition \ref{prop: max-case}  if   $p_g=d+2$ then $C^2=1$,  $K_S=dC$ and $|K_S|$ is base point free.  So Proposition \ref{prop: general} applies, giving  $h^0(4C)\ge 6$ if $d=5$ and $h^0(5C)\ge 7$ if $d=7$. By Lemma \ref{lem: h^0} if $d=5$ we get $7=p_g=h^0(5C)\ge 8$ and if $d=7$ we get $9=p_g=h^0(7C)\ge 11$, against the assumptions.
\end{proof}

\subsection{Exclusion of $d=5$, $p_g=6$, $C^2=1$, $K_SC=5$}

The exclusion of this case is more involved than the previous ones and  requires some intermediate steps.
 
In this case the canonical image $\Sigma$ is a cone of degree 4 in $\pp^5$, so $K_S=4C+Z$ where $Z>0$ is contracted to the vertex of $\Sigma$. Observe  that $5=K_SC=4C^2+CZ=4+CZ$ implies $CZ=1$.
We have two cases according to whether the base point $p$  of $|C|$ belongs to  $Z$ or not. We start by ruling out the first case.
\begin{lem}\label{lem: p non in Z}
In case $p_g=6$ and $K_SC=5$, $C^2=1$  of Proposition \ref{prop: cases5}, the point $p$ does not belong to $Z$.
\end{lem}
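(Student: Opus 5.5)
The plan is to argue by contradiction: assume $p\in Z$, show that then every $C\in|C|$ satisfies $K_S|_C=\OO_C(5p)$ with $p$ not a base point of $|K_S|$, apply Proposition \ref{prop: general}, and derive a contradiction with the smoothness of the general canonical curve.

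\emph{Step 1: the restriction $K_S|_C$.} Since $Z$ is contracted to the vertex of $\Sigma$ while every curve of $|C|$ maps onto a line of the ruling, $C$ and $Z$ have no common component. As $CZ=1$ and $p\in C\cap Z$, the divisor $Z|_C$ is exactly the point $p$ with multiplicity one, for every $C\in|C|$. Since $C|_C=\OO_C(p)$, we get $K_S|_C=\OO_C(4p+p)=\OO_C(5p)$ for all $C\in|C|$.

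\emph{Step 2: $p$ is not a base point of $|K_S|$.} By Proposition \ref{prop: max-case}(iii), $\fie$ separates the curves of $|C|$. Hence for a general $C$ the restriction of $\fie$ maps $C$ onto a line of $\Sigma$ with degree $d=5$. This map is induced by the moving part of $|K_S|$ restricted to $C$. If $p$ were a base point of $|K_S|$, this moving part would have degree at most $K_SC-1=4$ on $C$. The map $C\to$ line would then have degree at most $4$, a contradiction.

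\emph{Step 3: apply Proposition \ref{prop: general}.} Its hypotheses now hold with $d=5$, so it gives $h^0(4C)\ge 6$. Since $4C\le K_S$ and $p_g=6$, we get $h^0(4C)=6$ and $|K_S|=|4C|+Z$. Thus $Z>0$ is a fixed component of $|K_S|$.

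\emph{Step 4: contradiction with smoothness.} The general canonical curve is $D=M+Z$, with $M$ the moving part. By hypothesis $D$ is smooth. A smooth curve has no singular points on $M\cap Z$, so $M$ and $Z$ are disjoint. Then $D$ would be disconnected, contradicting the $2$-connectedness (in particular connectedness) of canonical divisors. Hence $p\notin Z$.

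The main point to check carefully is Step 2, which is where the hypothesis that $p$ is not a base point of $|K_S|$ in Proposition \ref{prop: general} is secured. It rests on two things. First, $p$ maps to the vertex, since all curves of $|C|$ pass through $p$. Second, the degree of $\fie|_C$ onto its line is exactly $d$, which is where Proposition \ref{prop: max-case}(iii) is used. Everything else is bookkeeping with earlier results.
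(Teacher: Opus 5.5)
\textbf{There is a genuine gap in Step~1.} Steps 2--4 follow the paper's argument. Proposition~\ref{prop: general} needs $K_S|_C\cong\OO_C(5p)$ for \emph{every} $C\in|C|$, not just the general one. This is what guarantees, in Lemma~\ref{lem: gaps for d=7 do not mix}, that every member is a reduced component through $p$ plus $(-2)$-curves. It is also what gives the constancy of $h^0(\OO_C(mp))$ used in Proposition~\ref{prop: h^0}. A jump on a single member puts torsion in $R^1$ and can lower $h^0(m_0C)$; this is exactly what happens in Proposition~\ref{prop: no C^2=2}.

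Your reason is that $Z$ goes to the vertex while each member of $|C|$ maps onto a line. That only shows that \emph{some} component of each member dominates a line; a reducible member can have other components contracted to the vertex. Under the hypothesis $p\in Z$ this actually happens. Since $C$ is nef and $CZ=1$, exactly one component $Z_0$ of $Z$ passes through $p$, with multiplicity one, and $CZ_0=1$ is entirely concentrated at $p$. Let $\epsilon\colon\tilde S\to S$ be the blow-up at $p$, with exceptional curve $E$ and fibres $F$. Then the strict transform satisfies $\tilde Z_0F=0$, so $Z_0$ is a component of some member $C_{t_0}$. In fact all of $\tilde Z$ is vertical.

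For that member the restriction formula is not available, and it cannot be obtained without already finishing the proof. One has $\epsilon^*K_S-5E=4F+\tilde Z$. So $K_S|_{C_{t_0}}\cong\OO_{C_{t_0}}(5p)$ would force the part of $\tilde Z$ supported on $F_{t_0}$ to have degree $0$ on every component of $F_{t_0}$. By Zariski's lemma that part would then equal $F_{t_0}$, since both contain $\tilde Z_0$ with multiplicity one; that is, $Z\ge C_{t_0}$. Then $K_S\sim 5C+(Z-C_{t_0})$ and $h^0(5C)\ge 6=p_g$. This gives either a fixed component, excluded as in your Step~4, or $|K_S|=|5C|$ with $h^0(5C)=6$. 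By the proof of Lemma~\ref{lem: h^0}, the latter means $\fie$ is composed with $|C|$.

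So the whole difficulty lies in the case where $C_{t_0}\supseteq Z_0$ violates the hypothesis of Proposition~\ref{prop: general}. Your argument does not treat this case, so Step~3 is unsupported.

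The paper's written proof states the restriction formula only for \emph{general} $C$ before invoking Proposition~\ref{prop: general}, so it does not supply this missing piece either. What is needed is some control of the special member $C_{t_0}$, e.g.\ that $h^0(\OO_{C_{t_0}}(mp))$ does not jump for $m\le 4$, or a different way to exclude $p\in Z$.
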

\begin{proof}
Assume by contradiction that $p\in Z$. Then for general $C\in |C|$, $K_S|_C=\OO_C(5p)$. The linear series  $|K_S|_C|$ is base point free, since by assumption the canonical map sends $C$ 5-to-1 onto a line.  
So $p$ is not a base point of $|K_S|$ and Proposition \ref{prop: general} gives $h^0(4C)=6=p_g$.
Hence $Z$ is contained in the fixed part of $|K_S|$, against the assumption that the general canonical curve is smooth.
\end{proof}
\begin{lem}\label{lem: Z -3}
In case $p_g=6$ and $K_SC=5$, $C^2=1$  of Proposition \ref{prop: cases5},  the curve  $Z$ is isomorphic to $\pp^1$, $Z^2=-3$ and $Z$ intersects any curve of $|C|$ transversally at one point distinct from $p$.
\end{lem}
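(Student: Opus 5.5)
The plan is to squeeze $Z^2$ between two bounds, using the numerical relations coming from $K_S=4C+Z$ and $CZ=1$, and then to read off the shape of $Z$ from adjunction. First I would record the local picture along $C$. Since $CZ=1$ and $C$ is nef, exactly one component $Z_0$ of $Z$ has $CZ_0=1$, and $Z_0$ occurs in $Z$ with multiplicity one. Every other component $Z_i$ has $CZ_i=0$, so it lies in a fibre of the pencil. By Lemma \ref{lem: p non in Z}, $p\notin Z$. Hence $Z$ meets a general $C$ transversally in a single point $q\neq p$, and $K_S|_C=\OO_C(4p+q)$. For the special curves of $|C|$, I would check the same transversality using $CZ_0=1$ together with $p\notin Z_0$.

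Next comes the numerics. We have $K_SZ=4CZ+Z^2=4+Z^2$. Adjunction gives $2p_a(Z)-2=K_SZ+Z^2=4+2Z^2$, so
\[
p_a(Z)=3+Z^2,\qquad K_S^2=20+K_SZ=24+Z^2 .
\]
The index theorem gives $K_S^2\le (K_SC)^2/C^2=25$. Now compare with the canonical map. Let $M$ be the moving part on the blow-up $\h S$. Since the general canonical curve is smooth, all base points are simple, so $M^2=K_S^2-b$, where $b$ is the number of base points. Also $M^2=d\deg\Sigma=20$, hence $b=K_S^2-20=4+Z^2$.

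On the other hand, $Z$ is contracted to the vertex, so its strict transform $\h Z$ satisfies $M\h Z=0$. This gives $K_SZ=\sum_{x\in Z}\mathrm{mult}_x$, which counts the base points lying on $Z$. Thus every base point lies on $Z$, and $b=K_SZ$, consistently with the formula above. Moreover, the base points avoid the general $C$, because $|K_S|_C|$ is the base point free $g^1_5$. In particular $Z^2\ge -4$, and $Z^2=-4$ exactly when $|K_S|$ is base point free.

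The remaining task is to show $Z^2=-3$, i.e.\ $b=1$. Granting this, $p_a(Z)=0$. I would then prove that $Z$ is reduced and irreducible, hence $Z=Z_0\cong\pp^1$ with $Z^2=-3$, and the transversality statement is the one already established above. For the reduction, I would use that $Z$ is negative definite (it is contracted by $\fie$, and $C$ is nef and big with $CZ_i=0$ for $i\ne0$). I would also use the $2$-connectedness of canonical divisors applied to $K_S=(K_S-Z_i)+Z_i$, together with the fact that $h^0(4C)=5$, which holds by Lemma \ref{lem: h^0}(i) and because $Z$ is not a fixed component. These should show that a component $Z_i\ne Z_0$, necessarily a $(-2)$-curve inside a fibre, cannot appear: otherwise $p_a$ or $K_SZ$ would become inconsistent with $b$ equal to the number of base points counted on $Z$.

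The main obstacle is excluding $b=0$ (where $Z^2=-4$, $p_a(Z)=-1$, so $Z$ is disconnected) and the cases $b\ge 2$. For $b=0$ I would first try to show that $K_SZ=0$ with $CZ_0=1$ contradicts $K_SZ_0\ge K_S\cdot(\text{base locus})$. Failing that, I would use that a base point free $|K_S|$ restricts on $Z_0$ to a trivial system, while $K_S|_{Z_0}$ has degree $4+Z_0Z\ne0$. For $b\ge 2$, I would exploit the semigroup/gap structure of $p$ and $q$ on $C$, namely $K_C=\OO_C(5p+q)$ of genus $4$, combined with Proposition \ref{prop: h^0}-type pushforward computations for $h^0(4C+Z)=6$. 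I expect this case analysis to be the delicate part.
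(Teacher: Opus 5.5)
Your numerical setup is correct: $K_SZ=4+Z^2$, $p_a(Z)=3+Z^2$, $K_S^2=24+Z^2$, and the base point count $b=K_SZ=4+Z^2$. Your second idea for $b=0$ can also be made to work. If $|K_S|$ were free, then $K_SZ_0=0$, so $Z_0$ would be a $(-2)$-curve. But $K_SZ_0=4+ZZ_0\ge 4+Z_0^2=2$, because $Z_0$ has multiplicity one in $Z$.

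However, the proposal stops exactly where the content of the lemma lies. You never prove $b=1$ (equivalently $p_a(Z)=0$), nor that $Z$ is irreducible. These cannot come out of base point numerics alone: every relation you derived is equally consistent with, say, $Z$ irreducible of arithmetic genus $1$, $Z^2=-2$, and two base points of $|K_S|$ on $Z$. The tools you sketch for $b\ge 2$ (gaps of $4p+q$, pushforward computations for $h^0(4C+Z)$) are not shown to exclude this. Your claim that any extra component $Z_i$ is necessarily a $(-2)$-curve is also unjustified a priori, since such a component could pass through a base point of $|K_S|$.

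The missing idea is to restrict the pencil $|C|$ to $Z$. Since $p\notin Z$, no member $C'\in|C|$ contains $Z$. Indeed, the only component of $C'$ with positive intersection with $C$ is the one through $p$, so $Z_0\le C'$ would force $p\in Z_0$. Hence $H^0(\OO_S(C))\hookrightarrow H^0(\OO_Z(C))$, so $h^0(\OO_Z(C))\ge 2$, while $\deg \OO_Z(C)=CZ=1$.

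Combine this with the $2$-connectedness of $Z$, which follows from that of $K_S$. If $Z=A+B$ with $CA=1$, $CB=0$, then writing $K_S=(4C+A)+B$ gives $AB=(4C+A)B\ge 2$. By \cite[Prop. A.5.ii]{cfm}, a $2$-connected curve carrying such a line bundle has no component $\Gamma$ with $C\Gamma=0$. So $Z=Z_0$ is reduced and irreducible, and a degree one line bundle with two sections on it forces $Z\cong\pp^1$.

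Adjunction, $-2=K_SZ+Z^2=4+2Z^2$, then gives $Z^2=-3$ directly. Transversality with every member of $|C|$ (at a point $\neq p$) follows from $Z$ irreducible, $ZC=1$ and $Z$ not a component of any $C$. The value $b=1$ then comes out a posteriori; you do not need it as input.
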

\begin{proof}
We claim that $Z$ is $2$-connected. Let $Z=A+B$ be a decomposition with $A,B>0$. Since $C$ is nef and $CZ=1$ we may assume $CA=1$, $CB=0$. Then we obtain a decomposition $K_S=(4C+A)+B$ and by the $2$-connectedness of canonical divisors $2\le(4C+A)B=AB$ as claimed.

Since $p\notin Z$ by Lemma \ref{lem: p non in Z} and $ZC=1$, there is no curve of $|C|$ that contains $Z$.
So $h^0(\OO_Z(C))\ge 2$.   By \cite[Prop. A.5.ii]{cfm}  we conclude that $Z$ has no component $\Gamma$ such that $\Gamma C=0$. Hence $Z$ is irreducible.  Then $CZ=1$ and $h^0(\OO_Z(C))\ge 2$ means that  $Z$ is a smooth rational curve. 

The adjunction formula gives $-2=K_SZ+Z^2=4+2Z^2$, namely $Z^2=-3$. 

\end{proof}

\begin{lem}\label{lem: C 2-connected}
The curves of $|C|$ are $2$-connected.
\end{lem}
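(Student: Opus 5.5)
Work in the setting of this subsection: $d=5$, $p_g=6$, $C^2=1$, $K_SC=5$. Here $K_S=4C+Z$, with $Z\cong\pp^1$, $Z^2=-3$, and $Z$ meeting every $C\in|C|$ transversally at one point different from $p$ (Lemma \ref{lem: Z -3}). Take any $C\in|C|$ and a decomposition $C=A+B$ with $A,B>0$. We must show $AB\ge 2$.

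First, $C$ is nef and $C^2=1=CA+CB$. So, up to swapping $A$ and $B$, we have $CA=1$ and $CB=0$. Also $K_SC=5=K_SA+K_SB$.

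We now use the $2$-connectedness of canonical divisors. Write
$$K_S=4C+Z=A+(3C+B+Z)$$
with both summands effective and nonzero. Then
$$2\le A(3C+B+Z)=3CA+AB+AZ=3+AB+AZ,$$
which is useless by itself, so a finer splitting is needed. The useful decomposition is
$$K_S=(B)+(4C-B+Z)=B+(3C+A+Z),$$
which gives
$$2\le B(3C+A+Z)=3CB+AB+BZ=AB+BZ.$$
So it suffices to show $BZ\le 0$.

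Since $CB=0$ and $B\le C$, the curve $B$ does not pass through $p$: the component of $C$ through $p$ meets $C$ positively, so it lies in $A$. Indeed, $B$ consists of components $\Gamma$ with $C\Gamma=0$. Since $K_S$ is nef and $K_S=4C+Z$, we get $K_S\Gamma=Z\Gamma$.

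Now $Z$ meets $C$ transversally at a single point $q\ne p$, and $ZC=1$. The point $q$ lies on a component $\Gamma_0$ of $C$ with multiplicity one and $Z\Gamma_0=1$, while $Z\Gamma=0$ for every other component $\Gamma\neq\Gamma_0$, because $Z\not\le C$ and intersections are nonnegative. We then argue that $\Gamma_0\le A$:
\begin{itemize}
\item If $\Gamma_0\le B$, then $C\Gamma_0=0$ and $K_S\Gamma_0=Z\Gamma_0=1$.
\item On the other hand, the components of $C$ with $C\Gamma=0$ must have $K_S\Gamma\le K_SB$, and $5=K_SA+K_SB$. Here we use the structure from the proof of Lemma \ref{lem: gaps for d=7 do not mix}: the component $C_p$ through $p$ has $K_SC_p=5$ when $K_S|_C$ is supported at $p$.
\end{itemize}
That structural fact is not available in this setting, so a direct argument is required instead.

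Suppose $\Gamma_0\le B$. Then $BZ=1$. Consider the decomposition $K_S=(B+Z)+(4C-B)$. Both summands are effective, since $4C-B\ge 3C+A$. This gives
$$2\le (B+Z)(4C-B)=4BC+4ZC-B^2-BZ=4-B^2-1=3-B^2.$$
Since $B^2<0$ (Zariski's lemma, as $B$ is a proper subcurve of a fibre-type curve with $CB=0$), this is consistent, so this alone gives no contradiction. Instead we use Zariski's lemma quantitatively: $AB=-B^2+CB-\dots$, and more precisely $AB=B(C-B)=-B^2\ge 1$. Because $S$ is of general type, $B^2$ is even by adjunction whenever $K_SB=0$. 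So if $BZ=0$, then $K_SB=0$, hence $B^2\le-2$ and $AB\ge 2$, which is what we want.

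The remaining case is $BZ=1$, that is, $K_SB=1$. Then $B^2$ is odd, so $B^2\le -1$. The only issue is therefore $B^2=-1$ with $K_SB=1$, which gives $p_a(B)=1$. This is the main obstacle.

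To handle it, we use that $A=C-B$ satisfies $K_SA=4$, $A^2=C^2-2CB+B^2=0$ and $CA=1$. We test $2$-connectedness on $K_S=(4C+Z-A)+A=(3C+B+Z)+A$: this gives $A(3C+B+Z)=3+1+AZ=4$, so no contradiction. We then try $K_S=(A+Z)+(4C-A)$: this gives $(A+Z)(3C+B)=3+1+0+1=5$, again fine. Hence the contradiction must come from elsewhere. We would use $h^0(\OO_Z(C))\ge 2$ together with the fact that $B$ then contains the point $q=Z\cap C$: moving $C$ in the pencil, a non-general special fibre containing such a $B$ arises, and $Z+B$ would give an elliptic-type configuration. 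Our plan is to show that $p_a(B)=1$, $B^2=-1$, $K_SB=1$ contradicts the index theorem applied to $B$ and $K_S$, or the nefness structure $K_S=4C+Z$ combined with $B\cdot(4C+Z)=1$. We expect this last subcase to be the genuinely delicate point.
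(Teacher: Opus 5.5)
\textbf{There is a genuine gap: the one case left open is the whole lemma.} Your reduction itself is correct. With $CA=1$, $CB=0$ you get $AB=-B^2\ge 1$. This follows from the Hodge index theorem, since $C^2>0$; $C$ is not a fibre on $S$, so Zariski's lemma is not quite the right reference. You also get $K_SB=ZB\in\{0,1\}$, and the adjunction parity $B^2\equiv K_SB \pmod 2$ disposes of everything except $BZ=1$, $B^2=-1$, i.e.\ $AB=1$. But that case is exactly what the lemma is about, and the proposal leaves it open.

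\textbf{Why your suggested tools cannot close it.} The index theorem says nothing when $B^2<0$. The configuration ``$B$ of arithmetic genus $1$ with $B^2=-1$, $A=C-B$ of arithmetic genus $3$ with $A^2=0$, $AB=BZ=1$, $AZ=0$'' is consistent with $K_S=4C+Z$ being nef. It is also consistent with the $2$-connectedness of $K_S$: one checks that $X(K_S-X)\ge 2$ for every $X=aA+bB+cZ$ with $0\le a,b\le 4$, $0\le c\le 1$, $0<X<K_S$. So numerical manipulation of $C$, $Z$, $K_S$ will not produce a contradiction; a geometric input is needed.

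\textbf{The missing idea is the disconnecting node, as in the paper.} Suppose some $C'\in|C|$ decomposes as $C'=A+B$ with $AB=1$, and choose $B$ minimal with $B(C'-B)=1$, so that $B$ is $2$-connected and $h^0(\OO_B)=1$.
\begin{enumerate}
\item \emph{Common components.} If $A$ and $B$ share a part $\Delta>0$, minimality forces $B=\Delta$ and $C'=A_0+2B$. Then $CB=0$ and $B^2=-1$. The $2$-connectedness of $K_S=B+(K_S-B)$ gives $1+BZ\ge 2$, so $ZC'\ge 2BZ\ge 2$, contradicting $ZC=1$.
\item \emph{No common components.} Then $AB=1$ means $A$ and $B$ meet transversally at a single point $q'$, which is a disconnecting node of $C'$. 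Since $h^0(\OO_B)=1$, every section of $\omega_{C'}|_B=\omega_B(q')$ vanishes at $q'$. Hence $q'$ is a base point of $|K_S+C|=|5C+Z|$.
\item \emph{Locating the node.} We have $\mathrm{Bs}\,|5C+Z|\subseteq\{p\}\cup Z$. Also $q'\ne p$, because $p$ is a smooth point of every curve of the pencil since $C^2=1$. So $q'\in Z$. But $q'$ is a double point of $C'$ and $Z\not\le C'$, so $ZC'\ge 2$, again a contradiction.
\end{enumerate}

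Your leftover case $BZ=1$, $B^2=-1$ is precisely what this argument handles. Replace the last part of your proposal with it; your Hodge-index and parity computation can stay as a self-contained proof of $1$-connectedness.
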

\begin{proof}
Remark that any curve  $C\in |C|$ is 1-connected by  \cite[Lem.2.6]{m}.

  Assume by contradiction that there is a curve $C'\in |C|$ decomposing as   $C'=A+B$ with $AB= 1$. Let $B$ be minimal with respect to $B(C'-B)=1$. Then, by  \cite[Prop. A.4.ii]{cfm}, $B$ is 2-connected. 

Now we claim  that $A$ and $B$ have no common components. 

Write $A=A_0+\Delta$, $B=B_0+\Delta$ where $A_0,B_0,\Delta\geq  0$ and  $A_0$ and $B_0$ have no common components and assume by contradiction  that $\Delta\neq 0$. 
Then we have $1=AB=\Delta^2+\Delta A_0+\Delta B_0+A_0B_0= \Delta(C'-\Delta)+ A_0B_0 $.   Since $A_0B_0\geq 0$, we have $\Delta(C;-\Delta)\le 1$. So we conclude by the minimality of $B$ with respect to $B(C'-B)=1$ that   $B_0=0$ and  $\Delta=B<A$. 
So $C'=A+B= A_0+ 2B$.

 Since $C^2=1$ and $C$ is nef necessarily $CA=1, CB=0$.  But then by 2-connectedness of the canonical divisors necessarily $BZ\ge1$, and this contradicts $ZC=1$ (recall that $Z$ is not a component of $C$ by Lemma \ref{lem: Z -3}). 
 
 Hence $A$ and $B$ have no common components and therefore  intersect transversely at  a single point $q$. 
 
 The point $q$ is thus a disconnecting node of $C'$, and as such a base point of   $|K_S+C|=|5C+Z|$. Since   $q$,  being a double point of a curve in $|C|$, is distinct from $p$, we conclude that $q$ lies in  $Z$ but this is a contradiction since $Z$ is not a component of any curve in $|C|$ and $ZC=1$.
  
So no curve of $|C|$ can decompose as $C=A+B$ with $AB=1$.
\end{proof}

As in the proof of Lemma \ref{lem: gaps for d=7 do not mix} we let $ \bar S$ be the surface obtained by  blowing up $p\in S$ and contracting the $(-2)$-curves contained in the curves of $|C|$ (recall that  these curves do not pass through $p$). Let  $\bar f\colon \bar S\to \pp^1$ be the fibration induced by $|C|$: the strict transform $\bar E$ of the exceptional curve of the blow up and the strict transform $\bar Z$ of $Z$ are disjoint sections of $\bar f$ and the fibers of $\bar f$ are either irreducible or have two components, a component   that meets $\bar E$ and a component    that meets  $\bar Z$.

Note that, although the fibers of $\bar f$ are Gorenstein, it is possible that their components are not so. In order to deal with this situation, we recall   the notion of numerically $m$-connected Gorenstein curve (\cite[Def.~3.1]{cfhr}),   that generalizes the analogous definition for  curves on a smooth surface. A Gorenstein curve $D$ is numerically $m$-connected if for any generically Gorenstein subcurve $D'\subset D$  the inequality $\deg \omega_D|_{D'}-(2p_a(D')-2)\ge m$ holds. 

\begin{lem} \label{lem: KC ampio}
Let $\bar C$ be any fiber of $\bar f$. Then $\omega_{\bar C}$ is very ample.
\end{lem}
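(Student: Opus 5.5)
The plan is to use the criterion from \cite{cfhr}: on a Gorenstein curve $D$ of arithmetic genus $g\ge 3$, the dualizing sheaf $\omega_D$ is very ample as soon as $D$ is numerically $3$-connected and is not honestly hyperelliptic. So the work splits into two tasks: show that every fiber $\bar C$ of $\bar f$ is numerically $3$-connected, and show that it is not honestly hyperelliptic. Throughout, $p_a(\bar C)=4$, since $\bar C$ is a degeneration of the general curve of $|C|$, which has genus $4$ because $K_SC=5$ and $C^2=1$.

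First we compare $\bar C$ with the corresponding curve $C\in|C|$ on $S$. Blowing up $p$ and contracting $(-2)$-curves inside fibers does not change the fiber as an abstract curve, up to the contraction of those $(-2)$-curves. More precisely, the map $\pi_*\OO_F\cong\OO_{\bar F}$ from the proof of Lemma~\ref{lem: gaps for d=7 do not mix} identifies $\bar C$ with $C$ modulo contracting the $(-2)$-components. Hence $\omega_{\bar C}$ corresponds to $\omega_C=(K_S+C)|_C=(5C+Z)|_C$.

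If $\bar C$ is irreducible, a generically Gorenstein subcurve is $\bar C$ itself, so numerical $3$-connectedness is automatic. In the reducible case $\bar C=\Gamma_1+\Gamma_2$, where $\Gamma_1$ meets $\bar E$ and $\Gamma_2$ meets $\bar Z$. We compute $\deg\omega_{\bar C}|_{\Gamma_i}-(2p_a(\Gamma_i)-2)$ by lifting to $S$: with $C=A+B$, $A\ni p$ and $B\cap Z\neq\emptyset$, this quantity equals $AB$, which is the analogue of $(K_S+C-A)A=BA$ after adding the contributions of contracted $(-2)$-curves. Lemma~\ref{lem: C 2-connected} gives $AB\ge 2$, and we need $3$. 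For this we use the ampleness of $\omega_{\bar C}$ on each component: $\deg \omega_{\bar C}|_{\Gamma_i}=(K_S+C)\cdot A_i\ge 1$, since $K_SA=5$ and $ZB\ge1$. We also need parity: $(K_S+C)A=A^2+K_SA+AB$ and $A^2+K_SA$ is even, so the quantity we are bounding has the parity of $(K_S+C)A$. Combining these, and using that $(K_S+C)B=ZB+CB+4CB$ with $CB=0$ (because $CA=1$), gives $(K_S+C)B=ZB=1$. Then $B^2+K_SB=B^2+1$ must be even, and $B(C-B)=-B^2=AB\ge2$ is odd, hence $\ge3$. This is the delicate step. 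I expect the bookkeeping for non-reduced components and for components that are not Gorenstein (the numerical definition is designed to handle these) to be the main obstacle, and I would handle it by working with divisors on $S$ and using $\pi^*$.

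Second, we exclude honest hyperellipticity. A hyperelliptic $\bar C$ of genus $4$ carries no $g^1_3$ with $3p$ a divisor other than $\OO(2\cdot g^1_2)$-type combinations; instead we use that $\omega_C=\OO_C(5p)+$ (the point $Z\cap C$). The canonical map of $S$ restricted to $C$ is given by $|4C+Z|_C|$, which maps $C$ $5{:}1$ onto a line, so $\omega_{\bar C}-(Z\cap C)$ has a base-point-free $g^1_5$ with $5p$ in it. On an honestly hyperelliptic curve of genus $4$, every complete $g^1_5$ that is base-point free would have to be composed with the $g^1_2$, which is impossible in odd degree. More carefully, $h^0(\OO_C(5p))\ge2$ with $|5p|$ base-point free, while on a hyperelliptic genus-$4$ curve $h^0(\OO(5p))=2$ forces $5p\sim 2g^1_2+q$ with $q$ a base point. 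This contradiction shows $\bar C$ is not hyperelliptic, and Theorem 3.6 of \cite{cfhr} then yields that $\omega_{\bar C}$ is very ample.
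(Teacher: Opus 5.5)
\textbf{The $3$-connectedness part is essentially right.} The parity $AB=-B^2\equiv K_SB=1 \pmod 2$ is the same mechanism that makes the paper's count work. There is a minor slip: $K_SA=4$, and it is $(K_S+C)A$ that equals $5$. You can avoid the lifting bookkeeping by arguing directly on $\bar C$. This also sidesteps the fact that $C$ itself is only $2$-connected on $S$, since splitting off a $(-2)$-curve gives exactly $2$. On $\bar C$, $\deg\omega_{\bar C}|_{\bar A}=5$ and $\deg\omega_{\bar C}|_{\bar B}=1$ are odd, so $\deg\omega_{\bar C}|_{\bar\Gamma}-2p_a(\bar\Gamma)+2$ is odd for both subcurves. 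Numerical $2$-connectedness, from \cite[Lem.~4.2]{cfhr} and Lemma~\ref{lem: C 2-connected}, then gives $3$.

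\textbf{The genuine gap is the exclusion of honest hyperellipticity, which is wrong as written.}
\begin{enumerate}
\item[(1)] \emph{Wrong linear series.} The pencil cut on $C$ by $|K_S|$ lies in $|(4C+Z)|_C|=|4p+q|=|\omega_C(-p)|$, not in $|5p|=|\omega_C(-q)|$.
\item[(2)] \emph{Wrong dimension.} By Riemann--Roch, $h^0(\OO_C(5p))=2+h^0(\OO_C(q))\ge 3$. So $|5p|$ is special, and you know nothing about its base points.
\item[(3)] \emph{False general claims.} On a hyperelliptic curve of genus $4$, a general effective divisor of degree $5$ is non-special and moves in a base-point-free complete $g^1_5$ that is not composed with the $g^1_2$. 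Also, $h^0=2$ in degree $5$ means non-special, so it cannot force $5p\sim 2g^1_2+q$, which has $h^0=3$.
\item[(4)] \emph{Reducible fibres.} These are never treated.
\end{enumerate}

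The natural repair of your idea does not suffice either. On an honestly hyperelliptic curve, $|\omega(-p)|$ has the base point $\iota(p)$, which contradicts freeness of the restricted canonical pencil. But this only works on fibres where that pencil is free. Here $K_S^2=(4C+Z)^2=21>20=5\deg\Sigma$, so $|K_S|$ has a base point, and the restricted pencil is not free on the fibre through it. The lemma is claimed for every fibre.

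\textbf{The missing argument is intrinsic, as in the paper.} $5p+q$ is a canonical divisor of $\bar C$, with $p\ne q$ smooth points. Suppose $\psi\colon\bar C\to\pp^1$ were an honest $g^1_2$. Then the canonical map is composed with $\psi$, so $5p+q=\psi^*D'$ with $\deg D'=3$. There are two possibilities, both absurd:
\begin{itemize}
\item[(a)] $q$ is a ramification point, so its multiplicity in $\psi^*D'$ would be even, not $1$.
\item[(b)] The other point of $\psi^{-1}\psi(q)$ lies in the support of $5p+q$, hence is $p$. Then $p$ and $q$ would have equal multiplicities, not $5$ and $1$.
\end{itemize}
This multiplicity argument also covers the reducible fibres. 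The paper words the irreducible case as $p+q\sim 2p$. For reducible $\bar C$ it instead uses $p_a(\bar A)=2\ne 0=p_a(\bar B)$, since both components of a reducible honestly hyperelliptic curve are $\pp^1$.
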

\begin{proof} As explained above, $\bar C$ is either irreducible or has two components $\bar A$ and $\bar B$ such that 
$\OO_{\bar A}( K_{\bar C})=\OO_{\bar A}( K_{\bar S})=\OO_{\bar A}(5p)$ and $\OO_{\bar B}( K_{\bar C})=\OO_{\bar B}( K_{\bar S})=\OO_{\bar B}(q)$, where $p$ denotes the  point  $\bar C\cap \bar E$  and $q$ denotes the  point  $\bar C\cap\bar Z$.
If we are in the latter case the curve $\bar C$ is $2$-connected by \cite[Lem.~4.2]{cfhr} and by Lemma \ref{lem: C 2-connected}, so $\deg \omega_{\bar C}|_{\bar B}-2p_a(\bar B)+2=3-2p_a(\bar B)\ge 2$, hence $p_a(\bar B)=0$.

Since $\bar A$ and $\bar B$ have no common components one has the following sequence (cf. \cite[Lem.~2.4]{cfpr}):
$$0\to \omega_{\bar A}\to \omega_{\bar C}\to \omega_{\bar C}|_{\bar B}\to 0, $$
that gives $\deg \omega_{\bar C}=2p_a(\bar A)-2+\deg \omega_{\bar C}|_{\bar B}$ and also, switching the roles of $\bar A$ and $\bar B$, 
 $\deg \omega_{\bar C}=2p_a(\bar B)-2+\deg \omega_{\bar C}|_{\bar A}$. Equating these two expressions we obtain $7-2p_a(\bar A)=\deg \omega_{\bar C}|_{\bar A}-2p_a(\bar A)+2=\deg \omega_{\bar C}|_{\bar B}-2p_a(\bar B)+2=3$,  hence $p_a(\bar A)=2$ and  $\bar C$ is numerically $3$-connected. 
 So  \cite[Thm.~3.6]{cfhr} applies and either $\omega_{\bar C}$ is very ample or $\bar C$ is honestly hyperelliptic, i.e.,  there is
a finite degree two map $\psi\colon \bar C\to\pp^1$ (an ``honest $g^1_2$''). By the above discussion, if $\bar C$ is reducible then it cannot be honestly hyperelliptic because $p_a(\bar A)\ne p_a(\bar B)$, while a reducible honestly hyperelliptic curve consists of two components both isomorphic to $\pp^1$. If $\bar C$ is irreducible then $5p+q$ is a canonical divisor. If $\bar C$ were honestly hyperelliptic, then the canonical map would be composed with $\psi$, hence $p+q$ and $2p$ would be linearly equivalent, contradicting $p\ne q$ and $p_a(\bar C)>0$.
\end{proof}

\begin{lem}\label{lem: 3p}
 Let $\bar C$ be any fiber of $\bar f$. Then $h^0(\OO_{\bar C}(3p))=1$.
\end{lem}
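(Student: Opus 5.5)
The plan is to argue first for the general fiber by a global dimension count on $S$, and then to treat the finitely many special fibers by a relative argument along $\bar f$.

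\emph{Setup on a fiber.} On $\bar C$ we have $\omega_{\bar C}=\OO_{\bar C}(5p+q)$ and $p_a(\bar C)=4$. By Lemma \ref{lem: KC ampio}, $\omega_{\bar C}$ is very ample, so $\bar C$ is not (honestly) hyperelliptic. Hence $h^0(\OO_{\bar C}(2p))=1$ for \emph{every} fiber, since a second section would give a degree-two map. Consequently $h^0(\OO_{\bar C}(3p))\in\{1,2\}$ for every fiber. Moreover, for each fiber $\bar C$, the proof of Lemma \ref{lem: gaps for d=7 do not mix} gives $h^0(\OO_{\bar C}(kp))=h^0(\OO_C(kp))$ for the corresponding curve $C\in|C|$. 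So I can freely pass between $\bar f$ and the pencil $|C|$ on $S$.

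\emph{General fiber.} Suppose $h^0(\OO_{\bar C}(3p))=2$ for the general fiber. By semicontinuity this holds for every fiber, and $h^0(\OO_C(2p))=1$ for every $C$. Thus the hypotheses of Proposition \ref{prop: h^0} hold with $m_0=3$, giving $h^0(3C)=5=3+2$. Lemma \ref{lem: h^0}(ii) then yields $h^0(4C)\ge 4+2+2-1$, and more precisely, taking $m=3$, $h=1$, $h^0(4C)\ge 3+2+2=7$. But $4C\le K_S=4C+Z$, so $h^0(4C)\le p_g=6$, a contradiction. Hence $h^0(\OO_{\bar C}(3p))=1$ for the general fiber, and the locus where it equals $2$ is a finite set of points of $\pp^1$.

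\emph{Special fibers (main obstacle).} It remains to show that $h^0$ cannot jump to $2$ on an isolated fiber $\bar C_0$. On such a fiber, Riemann--Roch gives $h^0(\OO_{\bar C_0}(2p+q))=h^0(\omega_{\bar C_0}(-3p))=2$, so $\bar C_0$ carries two distinct $g^1_3$'s. They are distinct because $p\ne q$, and they sum to the canonical class. My first attempt would be to exploit the relative sequence \eqref{eqn: 5 terms} for $\bar f$ with $m=3$. Here $R^1\bar f_*\OO(2\bar E)$ is locally free of rank $3$, while $R^1\bar f_*\OO(3\bar E)$ has generic rank $2$ and acquires torsion exactly at $\bar C_0$. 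I would then compare this with the global sections: the image of $H^0(3C)\to H^0(\OO_{C_0}(3p))$, together with the sections of $|4C|$ and $|K_S|=|4C+Z|$ restricted to $C_0$ (using that $Z$ meets $C_0$ transversally at $q$). The aim is to force $h^0(3C)\ge 5$ or $h^0(4C)\ge 7$ again.

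If this direct count does not close, I would instead apply Theorem \ref{thm: infinitesimal} to a suitable relative square root of the relative dualizing sheaf. The natural candidate is a twist of $\OO(3\bar E+\text{something supported on }\bar Z)$ whose square restricts to $5p+q$ on each fiber; one must check that such a twist exists, possibly after a base change of $\bar f$. A jump of $h^0$ at $\bar C_0$ would then produce torsion in $R^1$ of the form $\mathcal T\oplus\mathcal T$, which I would show is incompatible with the rank and degree bookkeeping coming from \eqref{eqn: 5 terms}. I expect this special-fiber step to be the real difficulty.
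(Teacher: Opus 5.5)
\textbf{There is a genuine gap: the special fibers are not handled.} Your general-fiber step is correct. If $h^0(\OO_{\bar C}(3p))=2$ generically, then semicontinuity together with $h^0(\OO_{\bar C}(2p))=1$ gives it on every fiber. Proposition \ref{prop: h^0} with $m_0=3$ then gives $h^0(3C)=5$, and Lemma \ref{lem: h^0}(ii) gives $h^0(4C)\ge 7>p_g$. But the lemma concerns \emph{every} fiber, and this is exactly what Corollary \ref{cor: 3p} and Proposition \ref{prop: no pg=6} need. For the special fibers you give no argument, and neither of your two strategies can work.

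For (1), suppose some isolated fiber had $h^0(3p)=2$. Then $R^1\tilde f_*\OO(3E)$ acquires torsion of some length $a\ge 1$. On the other hand $R^1\tilde f_*\OO(4E)$ is locally free, since $h^0(4p)=2$ on every fiber by Riemann--Roch and $h^0(p+q)=1$. Sequence \eqref{eqn: 5 terms} then gives $\tilde f_*\OO(3E)\cong\OO_{\pp^1}$ and $\tilde f_*\OO(4E)\cong\OO_{\pp^1}\oplus\OO_{\pp^1}(-4-a)$. Hence $h^0(3C)=4$ and $h^0(4C)=5$. A jump on a special fiber \emph{lowers} the global counts, so the inequalities $h^0(3C)\ge 5$ or $h^0(4C)\ge 7$ you aim for are false in that scenario. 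Your ranks are also off: the generic ranks of $R^1\tilde f_*\OO(2E)$ and $R^1\tilde f_*\OO(3E)$ are $2$ and $1$, not $3$ and $2$.

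For (2), $\OO(3p)$ is not a square root of $\omega_{\bar C}=\OO_{\bar C}(5p+q)$. A relative square root of the form $\OO(a\bar E+b\bar Z)$ would require $k(p-q)\sim 0$ with $k$ odd on every fiber. More to the point, $h^0(3p)=h^0(2p+q)$ compares two residual $g^1_3$'s, not the sections of a theta characteristic. So Theorems \ref{thm: parity of theta} and \ref{thm: infinitesimal} give no constraint here.

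\textbf{The missing idea: argue fiber by fiber.} No global input beyond Lemma \ref{lem: KC ampio} is needed. On any fiber, if $h^0(3p)=2$ then $h^0(2p+q)=2$ by Serre duality. Since $h^0(2p)=h^0(p+q)=1$ by very ampleness of $\omega_{\bar C}$, both $|3p|$ and $|2p+q|$ are base point free $g^1_3$'s, and they are distinct because $p\ne q$.
\begin{itemize}
\item If $\bar C$ is irreducible, the product map $h\colon\bar C\to\pp^1\times\pp^1$ is birational onto a curve of type $(3,3)$. It cannot have degree $3$, since the pencils are distinct. The image has arithmetic genus $4=p_a(\bar C)$, so $h$ is an isomorphism onto it. But both pencils ramify at $p$, so $h(p)$ is singular, a contradiction.
\item If $\bar C=\bar A\cup\bar B$ with $p_a(\bar A)=2$, $p\in\bar A$, $q\in\bar B$, the two series restrict to free series $|\OO_{\bar A}(2p)|$ and $|\OO_{\bar A}(3p)|$. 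This is impossible on a genus-$2$ curve, because then $|3p|=p+|2p|$ has a base point at $p$.
\end{itemize}
This treats all fibers uniformly and makes your general-fiber step unnecessary.
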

\begin{proof} By Lemma \ref{lem: KC ampio},  for any pair of smooth points $x,y\in \bar C$  one has $h^0(\OO_{\bar C}(x+y))=1$. So, in particular $h^0(\OO_{\bar C}(2p))=h^0(\OO_{\bar C}(p+q))=1$ and $h^0(\OO_{\bar C}(3p))=h^0(\OO_{\bar C}(2p+q))\le 2$, where the last equality is given by Serre duality, since $5p+q$ is a canonical divisor of $\bar C$.

Assume $h^0(\OO_{\bar C}(3p))=h^0(\OO_{\bar C}(2p+q))= 2$. By the previous arguments  $|\OO_{\bar C}(3p)|$ and $|\OO_{\bar C}(2p+q)|$ are base point free $g^1_3$'s and are distinct since $p\ne q$. Let $h\colon \bar C\to\pp^1\times \pp^1$ be the corresponding morphism and let $D$ be the image.  

If $\bar C$ is irreducible, then $h$ is birational and  $D$ is  of type $(3,3)$.   By adjunction $p_a(D)=4=p_a(\bar C)$, hence $h$ is an isomorphism. On the other hand, both $g^1_3$'s are ramified at $p$, hence $h(p)\in D$ is singular while $p\in \bar C$ is smooth. This contradiction shows that $h^0(\OO_{\bar C}(3p))=1$ when $\bar C$ is irreducible.

Consider now the case when $\bar C$ is reducible. We have seen in the proof of Lemma \ref{lem: KC ampio} that in this case $\bar C =\bar A\cup \bar B$ with $\bar A$, $\bar B$ irreducible with $p_a(\bar A)=2$ and $p_a(\bar B)=0$ and such that $p\in \bar A$ and $q\in \bar B$. The free linear series $|\OO_{\bar C}(2p+q)|$ and$|\OO_{\bar C}(3p)|$ restrict to $|\OO_{\bar A}(2p)|$ and $|\OO_{\bar A}(3p)|$.  Hence both  $|\OO_{\bar A}(2p)|$ and $|\OO_{\bar A}(3p)|$  are free, but this is impossible. This last contradiction completes the proof.
\end{proof}

We can now exclude this case.

\begin{prop}\label{prop: no pg=6}
The case   $p_g=6$ and $K_SC=5$, $C^2=1$  of Proposition \ref{prop: cases5} does not occur.
\end{prop}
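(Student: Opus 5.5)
The plan is to apply Corollary \ref{cor: 3p} to show $h^0(4C)=6=p_g(S)$. Then $Z$ is forced into the fixed part of $|K_S|$, exactly as at the end of the proof of Lemma \ref{lem: p non in Z}.

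\emph{Step 1: the hypotheses of Corollary \ref{cor: 3p}.} We have $K_SC=5$ by assumption. Next, $(K_S-3C)|_C=(C+Z)|_C$. By Lemma \ref{lem: p non in Z} and Lemma \ref{lem: Z -3}, $Z$ meets every $C\in|C|$ transversally in one point $q\neq p$. So for general $C$ one has $(K_S-3C)|_C=\OO_C(p+q)$, which is effective, and hence $h^0(\OO_C(K_S-3C))>0$.

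\emph{Step 2: $h^0(\OO_C(3p))=1$ for every $C\in|C|$.} This is the step needing care. Lemma \ref{lem: 3p} gives $h^0(\OO_{\bar C}(3p))=1$, but only for the fibres $\bar C$ of $\bar f$ on the contracted surface $\bar S$, and these must be compared with the curves $C$ on $S$. I would reuse verbatim the final part of the proof of Lemma \ref{lem: gaps for d=7 do not mix}, with $4E$ replaced by $3E$:
\begin{itemize}
\item since $R^1\pi_*\OO_{\tilde S}=0$, the exact sequence there gives $\pi_*\OO_F\cong\OO_{\bar F}$;
\item the projection formula then gives $\pi_*\OO_F(3E)\cong\OO_{\bar F}(3\pi(E))$;
\item finally $\epsilon$ identifies $\OO_C(3p)$ with $\OO_F(3E)$.
\end{itemize}
Together these give $h^0(\OO_C(3p))=h^0(\OO_{\bar F}(3\pi(E)))=1$ for every $C\in|C|$.

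\emph{Step 3: conclusion.} Corollary \ref{cor: 3p} now gives $h^0(4C)=6=p_g(S)$. Since $K_S=4C+Z$ with $Z>0$, the inclusion $|4C|+Z\subseteq|K_S|$ is an equality of linear systems, so $Z$ lies in the fixed part of $|K_S|$. The general canonical curve is then $M+Z$ with $M\in|4C|$ general. Here $MZ=4CZ=4>0$, so $M+Z$ is a reducible curve whose components meet, and it is therefore singular. This contradicts the standing assumption (a) that the general canonical curve is smooth, so the case does not occur.

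I expect the main obstacle to be already absorbed into Lemma \ref{lem: 3p}. What remains delicate is Step 2: we need the vanishing of $h^0(\OO_C(3p))-1$ for \emph{every} member of the pencil, including those containing $(-2)$-curves, because Proposition \ref{prop: h^0}, which underlies Corollary \ref{cor: 3p}, requires it fibrewise. The pushforward argument above handles this.
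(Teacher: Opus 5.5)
Your proposal is correct and follows the paper's proof. You transfer Lemma~\ref{lem: 3p} from $\bar S$ to $S$ via the pushforward argument of Lemma~\ref{lem: gaps for d=7 do not mix}, apply Corollary~\ref{cor: 3p} to get $h^0(4C)=6=p_g(S)$, and conclude that $Z$ is a fixed component of $|K_S|$. You also spell out two points the paper leaves implicit: that $(K_S-3C)|_C=\OO_C(p+q)$ is effective, and that a fixed component $Z$ with $4C\cdot Z=4>0$ forces the general canonical curve to be singular.
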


\begin{proof}[Proof of Proposition \ref{prop: no pg=6}]
Arguing as in the proof of Lemma \ref{lem: gaps for d=7 do not mix} one shows that for all $C\in|C|$ for every $m\in \N$ we have the equality $h^0(\OO_C(mp))=h^0(\OO_{\bar C}(mp))$, where $\bar C$ is the strict transform of $C$ in $\bar S$.
So Lemma \ref{lem: 3p} gives   $h^0(\OO_{C}(3p))=1$  for all $C$ and Corollary \ref{cor: 3p} gives $h^0({\mathcal O}_S(4C))=6=p_g(S)$. So $Z$ is contained in the fixed part of $|K_S|=|4C+Z|$, contradicting the assumption that the general canonical curve is smooth.
\end{proof}

\subsection{Exclusion of $d=5$, $p_g=5$, $C^2=2$, $K_S=3C$}

\subsubsection{Preliminary considerations.}

\begin{prop}\label{prop: II-regular}
In the assumptions of Theorem  \ref{thm: 42}, if $d=5$, $p_g=5$, $C^2=2$ then $q(S)=0$.
\end{prop}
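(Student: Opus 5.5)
The plan is to compute $q(S)=h^1(K_S)$ by restricting $|3C|=|K_S|$ to a curve of the pencil, using $K_S=3C$ (Proposition \ref{prop: cases5}) to identify every term in the sequence. First the numerical data. From $C^2=2$ and $K_SC=6$ adjunction gives $2p_a(C)-2=8$, so $p_a(C)=5$. Moreover $\omega_C=\OO_C(K_S+C)=\OO_C(4C)$, and $\OO_C(C)$ has degree $2$.

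Consider the exact sequence
\[
0\to \OO_S(2C)\to \OO_S(3C)\to \OO_C(3C)\to 0 .
\]
Its cohomology terms are determined as follows.
\begin{itemize}
\item $h^1(2C)=h^1(K_S-C)=h^1(-C)$ by Serre duality, and this vanishes by Ramanujam/Kawamata--Viehweg vanishing, since $C$ is nef and big ($C^2=2>0$).
\item $h^1(3C)=h^1(K_S)=q(S)$.
\item $h^2(2C)=h^0(K_S-2C)=h^0(C)=2$, because $|C|$ is a pencil; the irreducible curve $C$ is the preimage of a line of the cone, and by Proposition \ref{prop: max-case} $h^0(C)=2$.
\item $h^2(3C)=h^0(\OO_S)=1$.
\item $h^1(\OO_C(3C))=h^0(\omega_C(-3C))=h^0(\OO_C(C))$ by Serre duality on the Gorenstein curve $C$.
\end{itemize}
The long exact sequence then reads
\[
0\to H^1(3C)\to H^1(\OO_C(3C))\to H^2(2C)\to H^2(3C)\to 0 ,
\]
and counting dimensions gives $q(S)-h^0(\OO_C(C))+2-1=0$, that is,
\[
q(S)=h^0(\OO_C(C))-1 .
\]

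It remains to show $h^0(\OO_C(C))=1$ for a general $C\in|C|$. It is at least $1$ because $C$ moves, and it is at most $2$ since the degree is $2$ on a curve with $h^0(\OO_C)=1$. Suppose it were $2$. Since $C^2=2$, the two base points of $|C|$ give an effective degree-$2$ divisor, so $|\OO_C(C)|$ would be a $g^1_2$ on the general curve. By assumption (a) of Theorem \ref{thm: 42} together with Bertini, the general $C$ is smooth. A smooth genus-$5$ curve carrying a $g^1_2$ is hyperelliptic, contradicting Proposition \ref{prop: max-case}(ii).

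The main obstacle I expect is this last step: one must make sure that "non hyperelliptic" in Proposition \ref{prop: max-case}(ii) really applies to the general member, and that a degree-$2$ pencil on it would be a genuine $g^1_2$, whether or not the general $C$ is smooth. If smoothness of the general $C$ were in doubt, I would replace "smooth" by: $C$ is irreducible and $1$-connected with $h^0(\OO_C)=1$. Then a base point free degree-$2$ pencil on such a curve (base points are excluded by degree) defines an honest hyperelliptic map, again contradicting Proposition \ref{prop: max-case}(ii). Once $h^0(\OO_C(C))=1$ is established, the formula above gives $q(S)=0$.
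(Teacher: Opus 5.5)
\textbf{There is a genuine gap at your first bullet, and the argument depends on it.} Since $2C=K_S-C$, Serre duality gives $h^1(2C)=h^1(K_S-2C)=h^1(C)$, not $h^1(-C)$. The group dual to $H^1(-C)$ is $H^1(K_S+C)=H^1(4C)$, and that is the one Kawamata--Viehweg kills.

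Moreover $h^1(C)$ is never zero here. Riemann--Roch gives $\chi(2C)=\chi(\OO_S)-C^2=4-q$. We also have $h^2(2C)=h^0(C)=2$ and $h^0(2C)\ge 3$ (take $s^2,st,t^2$ for a basis $s,t$ of $H^0(C)$). Hence $h^1(2C)=h^0(2C)-2+q\ge q+1>0$.

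Once $H^1(2C)$ is put back into your long exact sequence, the alternating sum of dimensions is $h^0(2C)-h^1(2C)+q-2$. Substituting $h^1(2C)=h^0(2C)-2+q$, this vanishes identically: the sequence is just Riemann--Roch for $2C$, and $q$ drops out. So $q=h^0(\OO_C(C))-1$ is not proved, and it is not a general principle. From $0\to\OO_S\to\OO_S(C)\to\OO_C(C)\to 0$, the condition $h^0(\OO_C(C))=1$ only says that $H^1(\OO_S)\to H^1(\OO_S(C))$ is injective. That says nothing about $\dim H^1(\OO_S)$.

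In fact the paper uses the implication in the opposite direction: Lemma~\ref{lem: 3P+3Q}(i) deduces $h^0(F,p+q)=1$ from $q=0$. Your non-hyperellipticity argument for $h^0(\OO_C(C))=1$ is fine, but that is not where the difficulty lies.

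\textbf{What is missing is global input from $\Pic^0(S)$.} Restricting to a single curve of the pencil gives only $q\le g(C)-2=3$. This is Step~1 of the paper, which uses $H^1(K_S+C)=0$ and the fact that $|K_S|$ cuts only a pencil on $C$. The paper then varies the line bundle:
\begin{itemize}
\item For general $\alpha\in\Pic^0(S)$ it shows $h^0(C+\alpha)=0$, and $h^0(2C+\alpha)=0$ or $\le 1$ according as $q=3$ or $q=2$. This uses the injectivity of $\Pic^0(S)\to J(C)$, the fact that the image of $\Pic^0(S)$ in $J^2(C)$ cannot lie in $W_2$, and the Debarre--Fahlaoui result for the theta divisor. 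This contradicts $\chi(C+\alpha)=4-q$ when $q=2,3$.
\item For $q=1$, either the same argument applies, or $C$ is a double cover of the elliptic curve $\Alb(S)$. In the latter case the Albanese fibres meet $C$ in two points, so $|C|$ cuts a $g^1_2$ on them. They are then hyperelliptic, and the canonical map would have even degree.
\end{itemize}
Your proposal has no substitute for this step.
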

\begin{proof}
We write $q:=q(S)$ for simplicity. 

\noindent \underline{Step 1: $q\le 3$.}\\ 
For $C$ general consider the sequence:
$$0\to H^0(K_S)\to H^0(K_S+C)\to H^0(K_C)\to H^1(K_S)\to H^1(K_S+C)=0,$$
where the last term is zero by Kawamata-Viehweg vanishing, since $C$ is nef and big. 
By assumption the image of the restriction map $H^0(K_S)\to H^0(K_S|_C)$ is 2-dimensional. Since $C$ is a linear pencil, then  the image of 
$H^0(K_S+C)\to H^0(K_C)$ has dimension $\ge 2$, so $q=h^1(K_S)\le g(C)-2=3$.
\medskip

\noindent\underline{Step 2: $q\le 1$.}\\ Assume $q=2$ or $3$. Fix a smooth $C$. Since $C$ is nef and big the  map $\Pic^0(S) \to J(C)$ is actually an injection (see for instance \cite[Prop.~1.6]{cfm}). So  the image of the map $\phi_1\colon \Pic^0(S)\to J^2(C)$ defined by $\alpha\mapsto (C+\alpha)|_C$ is isomorphic to an abelian subvariety $A\subset J(C)$ of dimension $q$. The locus $W_2$ of effective classes in $J^2(C)$ is a surface birational
 to the symmetric product $S^2C$. So if  $q=3$ then  $W_2$ cannot contain  the image of $\phi_1$ by dimension reasons, and if $q=2$ it cannot contain it because $W_2$ is of general type while the image of $\phi_1$ is an abelian surface. 
So $h^0((C+\alpha)|_C) =0$ for $\alpha$ general. The exact sequence 
$$0\to H^0(\alpha) \to H^0(C+\alpha)\to H^0((C+\alpha)|_C)$$
gives $h^0(C+\alpha)=0$ for $\alpha\in \Pic^0(S)$ general.
 Consider now the map   $\phi_2\colon \Pic^0(S)\to J^4(C)$ defined by $\alpha\mapsto (2C+\alpha)|_C$. The image of $\phi_2$ is invariant under translation by $A$, hence if $q=3$ by
\cite[Proposition 3.3]{DF} it cannot be contained in the set of effective classes of $J^4(C)$, which is a translate of the $\Theta$-divisor of $J(C)$. 
 As a consequence, we have $h^0((2C+\alpha)|_C)=0$ for $\alpha$ general, and as above we get $h^0(2C+\alpha)=0$.
If $q=2$, \cite[Proposition 3.3]{DF} again implies that $h^0((2C+\alpha)|_C)\le 1 $, and therefore $h^0(2C+\alpha) \le 1$ for $\alpha$ general. 

Using Riemann-Roch on $S$ we compute:
$$4-q=\chi(C+\alpha)\le h^0(C+\alpha)+h^0(2C-\alpha), \quad \alpha \  \mbox{general}.$$
We have shown that the right hand side is zero if $q=3$ and is $\le 1$ if $q=2$, so we get a contradiction in either case. 
\medskip

\noindent\underline{Step 3: $q=0$.} By  Step 2  it is enough to rule out $q=1$. So assume by contradiction that $q=1$ and write $E:=\Pic^0(S)=\Alb(S)$ and let $f\colon C\to E$ be the restriction of the Albanese map. If  $\phi_1(E)$ is not contained in $W_2$, then we obtain a contradiction as in Step 2. So assume that $\phi_1$ maps $E$ to  $W_2=S^2(C)$. Then by \cite[Corollary 3.9]{cmlp}  there is a double cover $g\colon C\to E$ such that $\phi_1= g^*\colon E\to J^1(C)$. Both $f$ and $g$ induce the same injection of $E$ into $J(C)$, so $f$ and $g$ coincide up to a translation in $E$. 

If  $F$ is  a general fiber of the Albanese map $a\colon S\to E$, then $FC=2$, and therefore the linear pencil $|C|$ induces a $g^1_2$ on $F$. 
So $F$ is hyperelliptic and the canonical map factors through the hyperelliptic involution of $F$, contradicting the assumption that the degree of the canonical map is 5, odd. So we conclude that $q=0$.
\end{proof}

Recall that we assume that the general canonical curve is smooth and the canonical map of $X$ is 5-to-1 onto the cone $\Sigma_3\subset \pp^4$ over the twisted cubic curve of $\pp^3$. The curves of $|C|$ are mapped 5-to-1 to the rulings of the cone. By Proposition \ref{prop: cases5}, $K_S=3C$.

\begin{lem}\label{lem: basics} In the assumptions of Theorem  \ref{thm: 42}, if $d=5$, $p_g=5$, $C^2=2$ then
\begin{enumerate}
\item The general $C$ is smooth and non hyperelliptic
\item If $C=A+B$, with $A, B>0$ then $AB>0$ is even
\item $|K_S|$ and $|C|$ have  exactly one common base point $p\in S$
\item The base scheme of $|K_S|$ consists of $
p$, a point $p_1$ infinitely near to $p$ and a point $p_2$ infinitely near to $p_1$
\item $|C|$ has a second base point $q$, possibly infinitely near to $p$, in which case $q\ne p_1$
\item If $Z$ is an irreducible $(-2)$-curve of $X$, then $Z$ does not go through $p$ or $q$ and is contained in  exactly one curve of $|C|$
\item Assume that there is an irreducible curve $\Gamma$ such that $\Gamma C =1$ and $|C-2\Gamma|$ is not empty. Then $q$ is infinitely near to $p$, $\Gamma$ is unique and $2\Gamma$ is contained in the unique curve $C_0$  in $|C|$ singular at $p$. 
\end{enumerate}
\end{lem}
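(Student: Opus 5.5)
The plan is to work throughout with $K_S=3C$, $C^2=2$, $K_S^2=18$, $p_g=5$ and $q(S)=0$ (Proposition \ref{prop: II-regular}). Let $M$ be the moving part of $|K_{\h S}|$ as in the proof of Proposition \ref{prop: max-case}. The canonical map has degree $5$ onto the cubic cone $\Sigma_3$, so $M^2=15$. Since every base point of $|K_S|$ is simple (the general canonical curve is smooth), the base scheme of $|K_S|$ has length $K_S^2-M^2=3$. Moreover $M\hat C=5$ while $K_SC=6$.

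I start with the two items that only need the adjunction formula.

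For (ii): if $C=A+B$, then $AB>0$ because $C$ is $1$-connected, by \cite[Lem.2.6]{m} as used in Lemma \ref{lem: C 2-connected}. Adjunction gives that $A^2+K_SA=A^2+3CA=4A^2+3AB$ is even, so $AB$ is even.

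For (i): if every $C$ were singular at a base point of $|C|$, we would get $C^2\ge 4$, a contradiction. Bertini then gives smoothness of the general $C$ away from and at the base points. Non-hyperellipticity is Proposition \ref{prop: max-case}(ii).

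Next I locate the base points.

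For (iii): the sums $C_1+C_2+C_3$ of three general members of the pencil are canonical divisors. Hence every base point of $|K_S|$ lies in $\mathrm{Bs}|C|$. There is no fixed part, since the general canonical curve is smooth and irreducible.

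For (iv) and (v): I compare intersection numbers on the blow up. Write $\h C$ for the strict transform of a general $C$. The drop $K_SC-M\h C=1$ equals $\sum_i E_i\h C$. Hence exactly one exceptional curve of the blow up of the base scheme meets $\h C$, and the three base points form a single curvilinear chain $p,p_1,p_2$ through which the general $C$ passes only once. $|C|$ has base scheme of length $C^2=2$, so there is a second base point $q$.
\begin{itemize}
\item If $q$ were equal to $p_1$, the general $C$ would pass through both $p$ and $p_1$, contributing $2$ to $\sum E_i\h C$, which is impossible.
\item Similarly, if $q\neq p$ were a base point of $|K_S|$, it would give a second exceptional curve meeting $\h C$.
\end{itemize}
This bookkeeping of which exceptional curves the strict transforms of $C$ and of $M$ meet is the step I expect to be the most delicate. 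The difficulty is the possibility that $q$ is infinitely near to $p$ in a direction other than $p_1$. I would handle it by writing $\h C=\epsi^*C-\sum a_iE_i$ and $M=\epsi^*K_S-\sum E_i$ on the blow up of the whole chain and solving the resulting small system of equations.

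Last come the statements about special curves.

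For (vi): $K_SZ=0$ gives $CZ=0$, so $Z$ lies in a fibre of the pencil. If $Z$ passed through $p$ or $q$ without lying in every member, it would have $CZ>0$. It cannot lie in every member, since $|C|$ has no fixed part. Two distinct members meet only at the base points, so $Z$ lies in exactly one member.

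For (vii): $\Gamma C=1$ forces $\Gamma$ to pass through exactly one base point, and the member $C_0\ge 2\Gamma$ is then singular there. I would show that this point is $p$ and that the tangent data force $q$ to be infinitely near $p$. The argument computes $(C-2\Gamma)\cdot C=0$, which puts $C-2\Gamma$ inside a fibre, and uses (ii) to exclude the alternatives. Uniqueness of $\Gamma$ follows because a second such $\Gamma'$ would give two members singular at $p$, contradicting $C^2=2$. Alternatively, $2\Gamma+2\Gamma'\le C_0$ together with $\Gamma C_0=1$ gives a contradiction.
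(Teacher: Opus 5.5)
Your items (i) to (vi) essentially follow the paper. The identity $\sum_i E_i\widehat C=K_SC-M\widehat C=1$ is an intersection-number version of the paper's remark: if the length-$2$ base scheme of $|C|$ lay inside that of $|K_S|$, then $\fie|_C$ would have degree at most $6-2=4$. It correctly gives (iii) and (v). Three smaller points in this part:
\begin{enumerate}
\item In (iv), the ``hence'' does not follow from $\sum_i E_i\widehat C=1$. That identity is equally compatible with $p_1,p_2$ being two distinct points of the first exceptional curve. What forces a chain is the smoothness of the general canonical curve: it contains all base points, and a smooth germ has only one infinitely near point of each order. Equivalently, use the paper's observation that on the blow-up $\beta_p$ at $p$ one has $(\beta_p^*K_S-E_p)E_p=1$.
\item The case you flag as delicate, $q$ infinitely near to $p$ in a direction other than $p_1$, needs no work. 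Item (v) allows it, and your first bullet already excludes $q=p_1$.
\item Proposition~\ref{prop: max-case}(ii) states non-hyperellipticity, but its proof never argues it. The paper proves it exactly here: $|K_S|_{|C}+C_{|C}\subset|K_C|$, so for hyperelliptic $C$ the map $\fie|_C$ would factor through the $g^1_2$ and have even degree.
\end{enumerate}

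The genuine gap is (vii). You announce that ``the tangent data force $q$ to be infinitely near $p$'', but the route you indicate cannot give this. The equality $(C-2\Gamma)C=0$ is automatic. Applying (ii) to $C_0=\Gamma+(C_0-\Gamma)$ only says that $1-\Gamma^2$ is positive and even, which says nothing about where $q$ is.

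The missing idea is a local intersection count at the base points, the same one you used in (i), but applied to the special member. Suppose $p\neq q$ are both points of $S$. Then for every member $C_0$ and general $C$,
$2=C_0C\ge (C_0\cdot C)_p+(C_0\cdot C)_q\ge \mathrm{mult}_p C_0+\mathrm{mult}_q C_0$.
So every member of $|C|$ is smooth at $p$ and $q$. But $C_0\ge 2\Gamma$ with $\Gamma$ through a base point is singular there, a contradiction. Hence $q$ is infinitely near to $p$ and $\Gamma$ passes through $p$. Then $C_0$ is the member singular at $p$, which is unique because two such members would meet with multiplicity at least $4>C^2$.

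For the uniqueness of $\Gamma$, your two arguments are complementary, not alternatives. The first forces a second $\Gamma'$ into the same $C_0$. Then $2=C\,C_0\ge 2\Gamma C+2\Gamma' C=4$ gives the contradiction. You need $C\,C_0=2$ here; $\Gamma C_0=1$ alone gives nothing, since $\Gamma^2$ may be negative.
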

\begin{proof} 
\begin{enumerate}
\item By Bertini's theorem any linear system without fixed part and with selfintersection at most $3$ has a smooth element.
If the general $C$ were hyperelliptic, then the canonical map of $C$ would not separate two points in the same orbit of the hyperelliptic involution. A fortiori, neither $H^0(S,K_S+C)$ nor $H^0(S,K_S)$ would separate those points. This implies that the canonical map of $S$ restricted to $C$ has even degree, a contradiction.  
\item As $K_S=3C$, the $2-$connectedness of the canonical divisors implies $AB>0$. 
By the genus formula $A(K_S+A)=4A^2+3AB$ is even, and therefore $AB$ is even too.
\item As $C^2=2$, $|C|$ has two simple base points, possibly infinitely near.  The canonical system $|K_S|$ has 
$(3C)^2-(\deg \varphi_K) (\deg \varphi_K(S))=3$ base points, possibly infinitely near. As $K_S=3C$ every base point of $|K_S|$ on $S$ is also a base point of $|C|$. In particular $|C|$ and $|K_S|$ have at least a common base point $p$ on $S$. However, if the whole base scheme of $|C|$, of length $2$, were contained in the base scheme of $|K_S|$, then the restriction to $C$ of the canonical system of $S$ would also have base locus of length at least $2$, and then the canonical map of $S$ restricted to $C$ would have degree at most $6-2=4$ a contradiction. 
\item[(iv)-(v)] 
Then $|K_S|$ has only one base point on $S$, that we call $p$, and the remaining two base points $p_1$, $p_2$ are infinitely near to $p$. The second base point of $C$, that we call $q$, if infinitely near to $p$, cannot be $p_1$ or $p_2$ as we have just shown that the base scheme of $|C|$ is not contained in the base scheme of $|K_S|$. Consider the blow up $\beta_p \colon S_p \rightarrow S$ with exceptional divisor $E_p$. The movable part of $|K_{S_p}|$, $|\beta_p^* K_S - E_p|$, has intersection $1$ with $E_p$, so it cannot have two distinct base points on $E_p$. This shows that  $p_2$ is infinitely near to the $p_1$.
\item[(vi)] This follows immediately by $CZ=0$. 
\item[(vii)] As $\Gamma <C$, the condition $\Gamma C =1$ forces $\Gamma$ to contain either $p$ or $q$. If $q$ is not infinitely near to $P$, then every $C$ in $|C|$ is smooth at $p$ and $q$, a contradiction. Otherwise, there is exactly one curve $C_0$ that is singular at $p$, so $\Gamma \subset C_0$.
The uniqueness of $\Gamma$ is now obvious.
\end{enumerate}
\end{proof}

\subsubsection{Linear systems on the curves of $|C|$}

Let $\epsilon\colon \tilde S\to S$ be the blow-up at $p$ and $q$ and let $E_p$, $E_q$ be the corresponding $(-1)$-curves, where $E_p$ is reducible if and only if $q$ is infinitely near to $p$, in which case  $A:=E_p-E_q$ is an irreducible $(-2)$-curve. 

We denote by $f\colon \tilde S \to \pp^1$ the fibration induced by $|C|$, with fibre $F:=\epsilon^*C-E_p-E_q$. Each fibre $F$ maps to a curve $C$, and the map is an isomorphism unless $q$ is infinitely near to $p$ and $C$ is the unique element $C_0$ of $|C|$ singular at $p$. Then the fibre, say $F_0$, splits as $F_0=C'_0+A$, where $C'_0$ is the strict transform of $C_0$. In addition $p_a(C_0')=5-1=4$ and $AC'_0=2$.

For all curves $F$ in $|F|$ we will denote by $p$, respectively $q$, the line bundle restriction of $E_p$, respectively $E_q$ to $F$. For $F$ general, we can see $p$ and $q$ as simple points, the intersection of $F$ with $E_p$, respectively $E_q$.  

In this section we study the linear systems $|ap+bq|$ on the curves $C$. \par

By adjunction, we have for every curve of $|F|$:
$$K_{\tilde S}|_F=K_F=4(p+q), \quad \epsilon^*K_S|_F=3(p+q), \quad  |K_S|_F=p+|V|,$$
where $|V|\subset |2p+3q|$ is a pencil. There is exactly one curve $F_0$ for which $|V|$ is not base point free, the unique curve $F_0$ through $p_1$.
If $q$ is infinitely near to $p$, then $F_0$ is  the already mentioned curve containing $A$, mapping to the curve $C_0$ having a double point at $p$. 

\begin{lem} \label{lem: 3P+3Q}
For every  $F \in |F|$ we have:
 \begin{enumerate}
\item $ h^0(F, 3(p+q))=3$,  $h^0(F, p+q)=1$;
\item $h^0(F,2(p+q))=h^0(F, 2p+3q)-1\le 2$.
 \end{enumerate}
\end{lem}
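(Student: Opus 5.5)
The plan is to use Riemann--Roch and Serre duality on the fibres $F$, which all have arithmetic genus $5$ and satisfy $K_F=4(p+q)$. I will combine this with three inputs. The first is the structure of the restricted canonical series, $|K_S|_F=p+|V|$ with $|V|\subset|2p+3q|$ a pencil. The second is the fact that $q$ is never a base point of $|V|$. The third is Clifford's theorem, in the form valid for numerically $2$-connected Gorenstein curves (\cite{cfhr}). The $2$-connectedness needed for the third input comes from Lemma \ref{lem: basics}(ii) for the fibres isomorphic to curves of $|C|$. For the possibly reducible fibre $F_0=C_0'+A$ I expect to check it directly, using $AC_0'=2$ and $p_a(C_0')=4$.

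\emph{Step 1 (reduction for (i)).} Since $K_F-3(p+q)=p+q$ has degree $2$, Riemann--Roch gives
$$h^0(F,3(p+q))=6-4+h^0(F,p+q)=2+h^0(F,p+q).$$
It therefore suffices to show $h^0(F,p+q)=1$. Suppose instead that $h^0(F,p+q)\ge 2$. By Clifford (equality case) $F$ is honestly hyperelliptic and $|p+q|$ is its $g^1_2$. Then $K_F=4(p+q)$ and
$$|2p+3q|=|2(p+q)+q|=|2g^1_2|+q,$$
since on a hyperelliptic curve $h^0(2g^1_2+x)=h^0(2g^1_2)=3$ for any point $x$. So $q$ is a base point of all of $|2p+3q|$, in particular of $|V|$. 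This contradicts Step 2.

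\emph{Step 2 ($q$ is not a base point of $|V|$).} For $F\ne F_0$ the pencil $|V|$ is base point free. For $F_0$ the only extra base point of $|K_S|_{F_0}$ comes from the base point $p_1$ of $|K_S|$ infinitely near to $p$. By Lemma \ref{lem: basics}(iv)--(v) we have $q\ne p_1$, so $q$ is still not a base point of $|V|$. When $q$ is infinitely near to $p$, some care is needed to interpret ``$q$'' on $F_0$ as the point $E_q\cap F_0$ on the component $C_0'$. I would handle this by working on $\tilde S$, where $E_q$ meets $F_0$ transversally away from $A$.

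\emph{Step 3 (proof of (ii)).} Since $q$ is not a base point of $|2p+3q|$ (it already fails to be one for the subpencil $|V|$), we get
$$h^0(2p+3q)=h^0(2p+2q)+1.$$
This is the claimed equality. For the bound, $2(p+q)$ has degree $4$ on a genus-$5$ curve, and Clifford gives $h^0\le 3$, with equality only in the hyperelliptic case. Step 1 has excluded the case where $|p+q|$ is the $g^1_2$. If $F$ were hyperelliptic with a different $g^1_2$, then $h^0(2(p+q))=3$ would force $2(p+q)\equiv 2g^1_2$. By the same computation as in Step 1 this makes $q$ a base point of $|2p+3q|$, again contradicting Step 2. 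Hence $h^0(2(p+q))\le 2$.

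I expect the main obstacle to be the special fibre $F_0$ when $q$ is infinitely near to $p$. There one must verify both the $2$-connectedness, which is needed to apply the Clifford/hyperelliptic dichotomy of \cite{cfhr}, and the precise description of the base locus of $|V|$ on $F_0$. For all other fibres the argument is the standard smooth-curve one. As a cross-check, $2(p+q)$ is a theta characteristic on every fibre, so Theorem \ref{thm: parity of theta} gives constant parity of $h^0(2(p+q))$ along the pencil. This would be a consistency check on the special fibre, after contracting $A$ if necessary.
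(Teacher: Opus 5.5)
There is a genuine gap in the proposal. Your Step~2, and the equality $h^0(2p+3q)=h^0(2(p+q))+1$ in Step~3, are fine and coincide with the paper's argument: $E_q$ meets every fibre at a smooth point which, since $q\ne p_1$, is not a base point of the moving part of $\epsilon^*|K_S|$.

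\textbf{Where it breaks.} You use a ``Clifford equality $\Rightarrow$ honestly hyperelliptic'' dichotomy on singular fibres twice: in Step~1 to get $h^0(F,p+q)=1$, and in Step~3 to get $h^0(2(p+q))\le 2$.
\begin{itemize}
\item That dichotomy (cf.\ \cite[Thm.~3.6]{cfhr}) needs numerical $3$-connectedness. Lemma~\ref{lem: basics}(ii) only gives $2$-connectedness.
\item $3$-disconnected fibres can genuinely occur: $F_0=C_0'+A$ with $AC_0'=2$, which you single out yourself, or any $C=C'+Z$ with $Z$ a $(-2)$-curve and $ZC'=2$ (so $p_a(C')=4$).
\item On such a curve $C=C'+Z$, suppose $C'$ is hyperelliptic, $|p+q|$ is its $g^1_2$, and $C'\cap Z$ is a divisor of that $g^1_2$. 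Then $h^0(\OO_C(p+q))=2$, but the pencil contracts $Z$. Moreover $C$ is not honestly hyperelliptic, since a genus $4$ component admits no degree $1$ map to $\pp^1$.
\item So the implication you need is false in this generality, and a component-by-component analysis would be required.
\item Worse, $F_0$ may be non-reduced: it contains $2\Gamma'$ in the configuration later called (VO), which is not yet excluded at this point.
\item On $F_0$ the bundle $\OO_{F_0}(E_p)$ is not the bundle of a point, because $A\subset E_p$; it has degree $-1$ on $A$. So ``$|2p+3q|=|2g^1_2|+q$'' has no meaning there as written.
\end{itemize}

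\textbf{Missing idea.} The argument should be global rather than fibrewise, and you never use $q(S)=0$ (Proposition~\ref{prop: II-regular}).
\begin{itemize}
\item Since $K_{\tilde S}=\epsilon^*K_S+E_p+E_q$, Serre duality gives $h^1(\OO_{\tilde S}(E_p+E_q))=h^1(\epsilon^*K_S)=h^1(K_S)=0$.
\item Restricting $\epsilon^*C=F+E_p+E_q$ to $F$ therefore gives a surjection $H^0(\epsilon^*C)\to H^0(F,p+q)$ with one-dimensional kernel $H^0(E_p+E_q)$.
\item Hence $h^0(F,p+q)=1$ for \emph{every} fibre, whatever its singularities, and Riemann--Roch gives $h^0(3(p+q))=3$.
\end{itemize}

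The bound in (ii) then needs no Clifford either.
\begin{itemize}
\item In general $h^0(2p+3q)\le h^0(3p+3q)=3$, by multiplying by the section of $E_p$.
\item On $F_0$, when $q$ is infinitely near to $p$, first multiply by a section of $A=E_p-E_q$. This is injective on $H^0(F_0,2p+3q)$ because $(F_0-A)|_A$ has length $\ge 2$ while $2p+3q$ has degree $1$ on $A$.
\item Then multiply by the section of $E_q$, which meets $F_0$ at a smooth point.
\end{itemize}
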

\begin{proof}
(i) By Serre duality, $h^1(\OO_{\tilde S}(E_p+E_q))=h^1(\epsi^*K_S)$. In addition $h^1(\epsi^*K_S)=h^1(K_S)=0$, where the first equality holds because $R^1\epsilon_*\OO_{\tilde S}=0$ and the second one because of Proposition \ref{prop: II-regular}. So the sequence
$$0\to H^0(E_p+E_q)\to H^0(\epsilon^*C)\to H^0(F,p+q)\to 0$$ 
is exact for every $F\in |F|$ and $h^0(F, p+q)=1$. Then Riemann-Roch gives $ h^0(F, 3(p+q))=3$.

(ii) The curve $E_q$ meets every  fiber $F\in |F|$ at a smooth point, which is not a base point of $\epsi^*|K_S|$ by Lemma \ref{lem: basics}. A fortiori,  $q$ is not a base point of  $|2p+3q|$, namely $h^0(F,2(p+q))=h^0(F, 2p+3q)-1$. 
The inequality $h^0(F, 2p+3q)\le  h^0(F, 3(p+q))$ is obvious except for the case when $q$ is infinitely near to $p$ and $F$ is the fiber $F_0$ dominating the unique curve $C_0$  in $|C|$ singular at $p$.  In that case, let $s\in H^0(S,A)$ be a non-zero section and consider the map $\mu\colon H^0(F_0, 2p+3q)\to H^0(F_0, 3p+2q)$ defined by multiplying by $s$. Any section $\sigma\in \ker \mu$ is supported on $A$ and so vanishes on $B|_A$, where $B=F_0-A$. By Lemma \ref{lem: basics}, $B|_A$ is a scheme of length at least $2$, while $2p+3q$ has degree 1 on $A$. So $\sigma=0$, $\mu$ is injective and $h^0(F_0, 2p+3q)\le h^0(F_0, 3p+2q)$. In turn, $h^0(F_0, 3p+2q)\le h^0(F_0, 3p+3q)$ since $E_q$ meets $F_0$ at a smooth point. So $h^0(F, 2p+3q)\le  h^0(F, 3(p+q))$ for any $F\in |F|$ and $h^0(F, 2p+3q)\le 3$ by (i).
\end{proof}

We look now at $|2(p+q)|$, which is a theta characteristic on $F$.

\begin{lem} \label{lem: cases}
There are the following possibilities:
\begin{itemize}
\item[(E)] $h^0(F, 2(p+q))=2$ for every $F\in |F|$ (``even  case'');
\item[(O)] $h^0(F, 2(p+q))=1$ for every $F\in |F|$ (``odd case'');
\item[(VO)] $q$ is infinitely near to $p$, $h^0(F, 2(p+q))=1$ for every $F\ne F_0$, $h^0(F_0,2(p+q))=2$ and $C_0\ge 2\Gamma$, where $\Gamma$ is an irreducible curve such that $C\Gamma=1$ (``very odd case'').
\end{itemize}
\end{lem}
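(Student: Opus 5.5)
By Lemma \ref{lem: 3P+3Q}(ii), for every $F\in|F|$ we have $h^0(F,2(p+q))\in\{1,2\}$. Indeed it is at least $1$ because $2(p+q)$ is effective, and at most $2$ because $h^0(F,2p+3q)\le 3$. We also know that $2(p+q)$ is a theta characteristic on every fibre, since $K_F=4(p+q)$. The plan is to apply Harris' Theorem \ref{thm: parity of theta} to the fibration $f\colon \tilde S\to\pp^1$ with the line bundle $\mathcal L=\OO_{\tilde S}(2E_p+2E_q)$. For $\mathcal L_t^{\otimes 2}\cong\omega_{F_t}$ we use adjunction: $K_{\tilde S}=\epsi^*K_S+E_p+E_q=\epsi^*(3C)+E_p+E_q=3F+4(E_p+E_q)$. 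This restricts to $4(p+q)$ on every fibre, because $F|_F$ is trivial.

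The obstacle is that Harris' theorem needs reduced fibres, and $f$ may have non-reduced fibres. These come from $(-2)$-curves of $S$ (Lemma \ref{lem: basics}(vi)), from curves $\Gamma$ with $C\Gamma=1$ appearing with multiplicity, and possibly from $A$. As in the proof of Lemma \ref{lem: gaps for d=7 do not mix}, I would first contract all the $(-2)$-curves contained in fibres and not meeting $E_p$, $E_q$. The projection formula argument given there shows that $h^0$ of the pulled-back line bundle on a fibre equals $h^0$ on its image. The resulting fibration $\bar f$ has fibres whose components all meet the image of $E_p$ or $E_q$. Such a component has positive intersection with $C$, so since $C^2=2$ any fibre has at most two such components, each counted with multiplicity at most $2$.

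Now split into cases. If $\bar f$ has reduced fibres, Harris gives constant parity, which yields (E) or (O). Otherwise some fibre contains $2\Gamma$ with $\Gamma C=1$. By Lemma \ref{lem: basics}(vii) this forces $q$ infinitely near to $p$, makes $\Gamma$ unique, and puts $2\Gamma\le C_0$, so the only non-reduced fibre is $F_0$. Restricting to $\Delta=\pp^1\setminus\{F_0\}$, Harris gives constant parity away from $F_0$. If that constant value is $2$, then semicontinuity gives $h^0(F_0,2(p+q))\ge 2$, hence equality, and we are in case (E). If the constant value is $1$, then either $h^0(F_0)=1$, which is (O), or $h^0(F_0)=2$ and $C_0\ge 2\Gamma$, which is exactly (VO).

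It remains to check that if parity jumps at $F_0$ then $F_0$ really is non-reduced, rather than reduced with a jump. I would handle this by noting that when $F_0$ is reduced after the contractions, Harris applies over all of $\pp^1$. That leaves only one non-formal point: showing that in the non-reduced case the multiple component is indeed a curve $\Gamma$ with $C\Gamma=1$, so that Lemma \ref{lem: basics}(vii) applies. The component $A$ itself is reduced, with $AF_0$-multiplicity $1$, since $E_p=E_q+A$. A non-reduced component not contracted must have positive $C$-degree, and with $C^2=2$ the multiplicity-$2$ option forces $C\Gamma=1$ and $2\Gamma\le C$.
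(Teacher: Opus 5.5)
Your proposal is correct and follows essentially the paper's route. When the generic value is $2$ you use semicontinuity; otherwise you apply Harris' theorem after contracting the $(-2)$-curves as in Lemma \ref{lem: gaps for d=7 do not mix}, away from the only possible non-reduced fibre $F_0\ge 2\Gamma$ singled out by Lemma \ref{lem: basics}(vii). Your explicit count via $C^2=2$ spells out what the paper leaves implicit, with one harmless slip: $A$ is $\epsi$-exceptional, so it is not a component of positive $C$-degree and $\bar F_0$ may have three components, which changes nothing since $A$ has multiplicity one.
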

\begin{proof} 
Recall that $h^0(F,2(p+q))\le 2$ by  Lemma \ref{lem: 3P+3Q}. 
If $h^0(F,2(p+q))=2$ for $F$ general, then $h^0(F,2(p+q))=2$ for all $F$ by semicontinuity.

So assume that $h^0(F,2(p+q))=1$ for $F$ general. By Lemma \ref{lem: basics} if a fiber $\bar F\in |F|$ contains a multiple irreducible component that is not a $(-2)$-curve then $q$ is infinitely near to $p$ and $\bar F=F_0=\epsi^*C_0-E_p-E_q$.  Then we can apply Harris Theorem  \ref{thm: parity of theta} as in Lemma \ref{lem: gaps for d=7 do not mix} and obtain that the parity of $h^0(F,2(p+q))$ is constant as $F$ varies in $|F|\setminus \{F_0\}$. So, either we are in case $(O)$ or $h^0(F_0,2(p+q))=2$, and the latter case occurs only if $C_0\ge 2\Gamma$, where $\Gamma$ is an irreducible curve with $C\Gamma=1$. 
\end{proof}

\subsubsection{Exclusion of cases (E) and (O) of Lemma \ref{lem: cases}}
\begin{prop}\label{prop: noE}
Case (E) in Lemma \ref{lem: cases} does not occur.
\end{prop}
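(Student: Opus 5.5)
In case (E) we have $h^0(F,2(p+q))=2$ for every fibre, and by Lemma \ref{lem: 3P+3Q}(ii) also $h^0(F,2p+3q)=3=h^0(F,3(p+q))$. The plan is to argue as in Proposition \ref{prop: h^0}: push the relevant sheaves forward to $\pp^1$, show that $h^0$ of a suitable multiple of $C$ (twisted by the exceptional curves) is too large, and contradict $p_g=5$ or the structure of $|K_S|$.

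The first step concerns the restriction $|K_S|_F=p+|V|$. There $|V|\subset|2p+3q|$ is a pencil, while $|2p+3q|$ is a net, so $|V|$ is a proper subsystem. Since $h^0(F,2p+2q)=2$, the complete series $|2p+3q|$ is $q+|2p+2q|$ plus one more section. I would then work on $\tilde S$ with $f\colon\tilde S\to\pp^1$ and compute $f_*\OO_{\tilde S}(2E_p+2E_q)$ and $f_*\OO_{\tilde S}(2E_p+3E_q)$. In case (E) all the $h^0$ are constant, so these sheaves are locally free of ranks $2$ and $3$, by Grauert, using that the fibres are reduced or at least that $h^0$ is constant. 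The determinant of such a sheaf is computed by the exact sequences adding $E_p$ and $E_q$ one at a time, exactly as in \eqref{eqn: 5 terms}: each step contributes either $0$ or $\OO_{\pp^1}(-E_\bullet^2)=\OO_{\pp^1}(-1)$, with the appropriate twist. From this I get $f_*\OO(2E_p+2E_q)=\OO\oplus\OO(-a)$ with $a$ small, presumably $a=2$. Indeed $\epsilon^*(2C)=2F+2E_p+2E_q$, so $h^0(2C)=h^0(f_*\OO(2E_p+2E_q)\otimes\OO(2))=3+(3-a)$. Riemann–Roch with $q=0$ gives $\chi(2C)=\chi+\frac12 2C(2C-K_S)=6-2=4$ (using $\chi=6$, $C^2=2$, $K_S=3C$), while $h^2(2C)=h^0(C)=2$, so $h^0(2C)\ge 2+h^1$. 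If case (E) forces $a\le 1$, we get $h^0(2C)\ge 5$. Similarly, $h^0(3C)=p_g=5$ can be compared with the sections coming from $f_*\OO(3E_p+3E_q)$, whose rank is $3$ with one summand $\OO$ and the others of negative degree. The aim is $h^0(3C)\ge 6$, a contradiction.

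The cleanest route may be the following. Even case plus semicontinuity gives $\operatorname{rank} f_*\OO(2E_p+2E_q)=2$. The splitting $\OO\oplus\OO(-a)$ holds with $a=\deg$, and that degree is computed via the two-step sequences, whose connecting maps are controlled as in Proposition \ref{prop: h^0} by the constancy of $h^1$. Then $h^0(3C)\ge h^0(2C)+\rho_3$, where the image of restriction to $F$ contains $\bar\sigma\cdot(\text{image at level }2)$, in the spirit of Lemma \ref{lem: h^0}. Getting $\ge6$ contradicts $p_g=5$.

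The main obstacle is computing $a$. This requires controlling the connecting homomorphisms $\OO_{\pp^1}(-1)\to R^1f_*$ at each step, and handling the special fibre $F_0$ when $q$ is infinitely near $p$. If the direct count fails, I would instead show that the moving pencil $|V|$ must then equal $q+|2p+2q|$, making $q$ a base point of $|K_S|_F$, which contradicts Lemma \ref{lem: basics}(v). Alternatively, the even $g^1_4=|2p+2q|$ gives a degree $4$ map on $F$ compatible with the $5$-to-$1$ canonical map, and the two $g^1_3$ type argument of Lemma \ref{lem: 3p} on $\pp^1\times\pp^1$ would give a contradiction.
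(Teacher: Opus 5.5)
Your strategy of pushing forward to $\pp^1$ and showing that $|3C|=|K_S|$ would have more than $5$ sections is the right one, and your ``cleanest route'' can be made to work. As written, though, the proof has a genuine gap exactly where the paper does its real work: you never determine $a$, and you never treat the case where $q$ is infinitely near to $p$.

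The hoped-for $a\le 1$ cannot occur. Restricting to a general $C$ gives $h^0(2C)\le h^0(C)+h^0(F,2p+2q)=4$, so $a\ge 2$, and $h^0(2C)$ alone never gives a contradiction. What you need is $a=2$, i.e.\ $h^0(2C)=4$. Then $\rho_2=2$, hence $\rho_3\ge 2$, and $h^0(3C)\ge 6$. Proving $a=2$ means showing that a connecting map $\OO_{\pp^1}(-2)\to R^1f_*$ vanishes. In your set-up that requires $E_p,E_q$ to be disjoint sections, plus constancy of certain $h^1$ on \emph{all} fibres, which you do not supply.

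When $q$ is not infinitely near to $p$, this can be done. Riemann--Roch and Lemma~\ref{lem: 3P+3Q} give $h^0(F,2p+q)=h^0(F,2p+3q)-1=2$ for every $F$. So $q$ is a base point of $|2p+2q|$; in particular $|2p+2q|=q+|2p+q|$ gives no degree $4$ map, so your last fallback is off. It follows that $f_*(2E_p+2E_q)=f_*(2E_p+E_q)$. Moreover $h^1(F,p+q)=h^1(F,2p+q)=3$ for all $F$, which makes $0\to\OO\to f_*(2E_p+E_q)\to\OO_{\pp^1}(-2)\to 0$ exact and gives $a=2$. The paper does the analogous computation directly on the moving part $3F+2E_p+3E_q$, subtracting $E_p+E_q$ at once after proving $h^0(F,p+2q)=1$ for all $F$, and obtains $h^0=7$.

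The infinitely near case needs a separate idea that your proposal lacks. There $E_p=A+E_q$ is not a section, $F_0$ contains $A$, and your sequences do not apply. The paper rules this case out first. On a general $F$ one has $p=q$, so $h^0(F,4p)=2$, $h^0(F,5p)=3$ and $h^0(F,3p)=2$. Then $|5p|$ maps $F$ birationally onto a plane quintic with a cuspidal branch at the image of $p$ and a further singular point infinitely near to it. This forces geometric genus $\le 4<5$. Equivalently, the non-gaps of $p$ would contain $3$ and $5$, leaving at most $4$ gaps on a curve of genus $5$.

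Your other fallback does not replace this. Nothing in your argument forces $|V|=q+|2p+2q|$; that statement is just a reformulation of the contradiction you want, since $|V|$ is free at $q$ by Lemma~\ref{lem: basics}, and you give no mechanism for it.
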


Let us first introduce the idea of the proof below. 
The movable part of the canonical system of $\tilde S$ is $|D|=|3F+2E_p+3E_q|$.
We will see that $f_*D$ is a vector bundle on ${\mathbb P}^1$ of rank $3$.
To have the canonical image equal to the cone over a twisted cubic curve, we expect  $f_*D={\mathcal O}_{{\mathbb P}^1}(3) \oplus {\mathcal O}_{{\mathbb P}^1} \oplus {\mathcal L}
$ with $h^0({\mathcal L})=0$. We will prove that such a decomposition holds but ${\mathcal L}\cong {\mathcal O}_{{\mathbb P}^1} (1)$, so $p_g=7\neq 5$. In other words, the map on the cone would not be given by the whole canonical system, but  by a proper subsystem.

\begin{proof}
We assume by contradiction that we are in case (E) of Lemma \ref{lem: basics}, so $h^0(F, 2(p+q))=2$ for all $F$. In this case $h^0(F, 2p+3q)=3$ for all $F$ by Lemma \ref{lem: 3P+3Q}.

This implies \underline{$q$ not infinitely near to $p$}. 

In fact, otherwise, choose a general $F\cong C$. Note that $p=q$ on it, we have $h^0(C,4p)=2$, $h^0(C,5p)=3$, and then, since $5$ is a prime number, $|5p|$ defines a morphism $C \rightarrow {\mathbb P}^2$ onto a quintic. Since by Riemann-Roch $h^0(C,3p)=3-5+1+h^0(C,5p)=2$, then the quintic has a singular (cuspidal) branch at (the image of) $p$. 

We set local coordinates $x,y$ at $p$ so that $p$ maps to $(0,0)$ and the line $y=0$ is the one cutting $5p$. So the local equation of the quintic is of the form $x^5=ya(x,y)$. Since the origin is a singular point that is not an ordinary double point (having a singular branch) then $a(x,y)$ is in the ideal generated by $x^2$ and $y$ and we can rewrite the quintic as 
\[
x^5=y(x^2b(x,y)+yc(x,y)).
\]
Performing a standard blow up $y\mapsto xy$ we obtain $x^5=xy(x^2b(x,xy)+xyc(x,xy))$ from which, dividing by $x^2$
\[
x^3=y(xb(x,xy)+yc(x,xy)).
\]
This equation is singular at the origin. So the quintic has at least two singular points, the image of $p$ and a point infinitely near to it. Since the arithmetic genus of a plane quintic is $6$ then its normalization has genus $\leq 4$, whereas $C$ has genus $5$, a contradiction. 

Now we show that \underline{$h^0(F, p+2q)=1$ for all $F$}. 

Since $q$ is not infinitely near to $p$, $p$ and $q$ are distinct points in all curves $C \cong F$.
Since $h^0(F, 2p+3q)=3$, we claim that $|2p+3q|$ separates $p$ and $q$. Otherwise, since $h^0(F, 2p+2q)=2$ for all $F$, there is a curve $F$ such that, on $F$, $p$ is a base point of $|2p+2q|$. On the other hand, by Riemann-Roch $h^0(F,2p+q)=3-5+1+h^0(F,2p+3q)=2$ and so $q$ is also a base point of  $|2p+2q|$. By $p\neq q$ we deduce $h^0(F,p+q)=2$ contradicting Lemma \ref{lem: 3P+3Q}.

\

Now we set $D=3F+2E_p+3E_q$. 

Recall that we have proved that $E_p$ and $E_q$ are two disjoint sections of the fibration $f\colon \tilde{S} \rightarrow {\mathbb P}^1$. Since $D\cdot E_p=1$ and $D\cdot E_q=0$, the exact sequence
\[ 0 \rightarrow D-E_p-E_q \rightarrow D \rightarrow D_{|E_p+E_q} \rightarrow 0  \]
pushes forward  to
\begin{multline}\label{eq: long exact sequence even}
0 \rightarrow f_* (3F+E_p+2E_q) \rightarrow f_* (3F+2E_p+3E_q)
 \rightarrow {\mathcal O}_{{\mathbb P}^1}(1) \oplus {\mathcal O}_{{\mathbb P}^1} 
 \rightarrow \\
 \rightarrow R^1f_* (3F+E_p+2E_q) \rightarrow R^1f_* (3F+2E_p+3E_q) \rightarrow 0
\end{multline}
The map among the $R^1$'s is an isomorphism by the same argument used in the proof of Proposition \ref{prop: h^0} about the exact sequence \eqref{eqn: 5 terms}, since, for all $F$, $h^0(F,p+2q)=1$, $h^0(F,2p+3q)=3$ and then $h^1(F,p+2q)=h^1(F,2p+3q)=2$. As there we deduce a short exact sequence of locally free sheaves 
\begin{equation}\label{eq: short exact sequence even}
0 \rightarrow f_* (3F+E_p+2E_q) \rightarrow f_* (3F+2E_p+3E_q) \rightarrow {\mathcal O}_{{\mathbb P}^1}(1) \oplus {\mathcal O}_{{\mathbb P}^1}
 \rightarrow 0
\end{equation}

Since $h^0(E_p+2E_q)=1$ the line bundle $f_* (E_p+2E_q)$ is trivial and therefore $f_* (3F+E_p+2E_q)=f_* (E_p+2E_q) \otimes {\mathcal O}_{{\mathbb P}^1}(3)={\mathcal O}_{{\mathbb P}^1}(3)$.

Then \eqref{eq: short exact sequence even} implies $h^0(3F+2E_p+3E_q)=7$. This contradicts $p_g=5$ since $|3F+2E_p+3E_q|$ is the movable part of the canonical system of $\tilde{S}$.
\end{proof}
\begin{prop}\label{prop: noO}
Case (O)  of   Lemma \ref{lem: cases} does not occur.
\end{prop}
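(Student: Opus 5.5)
Mirroring the proof of Proposition~\ref{prop: noE}, I will compute $h^0$ of the movable part $D=3F+2E_p+3E_q$ of $|K_{\tilde S}|$ by pushing it forward to $\pp^1$, and show that it is not $5$. In case (O) we have $h^0(F,2(p+q))=1$ for every $F$. By Lemma~\ref{lem: 3P+3Q}(ii) this gives $h^0(F,2p+3q)=2$, and $q$ is a base point of $|2p+2q|$; hence $h^0(F,2p+q)=1$. Riemann--Roch on curves of genus $5$ then gives $h^1(F,2p+3q)=2$ and $h^1(F,2p+q)=4$.

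The plan is to filter $D$ through the chain
\[
D_0=3F\subset 3F+E_p\subset 3F+E_p+E_q\subset 3F+2E_p+E_q\subset 3F+2E_p+2E_q\subset 3F+2E_p+3E_q=D,
\]
and to push each successive restriction sequence forward by $f$. Each graded piece is $\OO_{E}(D'|_E)$ for $E=E_p$ or $E_q$, which is a line bundle on $\pp^1$ of degree $3+E\cdot(\text{previous})$, computed with $E_p^2=E_q^2=-1$ (adjusting by $A$ if $q$ is infinitely near). Whenever $h^0(F,\cdot)$ is constant along the pencil, the $R^1$ sheaves at the two ends of a step are locally free of the same rank. The argument of Proposition~\ref{prop: h^0} then makes the $R^1$ map an isomorphism, so that step yields a short exact sequence of bundles. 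By Lemma~\ref{lem: 3P+3Q} and case (O), the restricted values $h^0(F,ap+bq)$ are constant in $F$ for all the relevant $(a,b)$: namely $(0,0),(1,0),(1,1),(2,1),(2,2),(2,3)$, with values $1,1,1,1,1,2$. So $f_*(3F+2E_p+2E_q)\cong\OO_{\pp^1}(3)$, and the last step gives
\[
0\to\OO_{\pp^1}(3)\to f_*D\to \OO_{E_q}(D|_{E_q})=\OO_{\pp^1}(3-3)=\OO_{\pp^1}\to 0,
\]
if I compute $D\cdot E_q=0$ correctly. This sequence splits, so $h^0(D)=4+1=5$, which is consistent and gives no contradiction.

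A finer invariant is therefore needed, and I would use the geometry of the cone. The image of $f_*D\cong\OO(3)\oplus\OO$ must realize the map $\tilde S\to\Sigma_3$, and on each fibre the pencil $|V|=|2p+3q|$ must induce the $5:1$ map $F\to$ line. In case (O) this pencil is the complete $|2p+3q|$ and is base point free except on $F_0$. I would then derive a contradiction on a general fibre. Since $h^0(2p+q)=1$ and $h^0(2p+2q)=1$, the divisor $2p+3q$ moves with $q$ not a base point. Comparing with $K_F=4p+4q$ and using Serre duality, $|2p+3q|$ has residual $|2p+q|$, which is a single effective divisor $2p+q$ (here $p,q$ are distinct on general $F$, or we use the non-reduced version). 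The map to $\pp^1$ is totally ramified at $p$ with index... The plan is to show that a degree $5$ pencil with a fibre $2p+3q$ and $h^0(2p+2q)=1$ conflicts with the genus/ramification count of Riemann--Hurwitz, or with the requirement that $\varphi$ have odd degree, via the theta characteristic $2(p+q)$ being odd.

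The hardest step is this last one. I expect the real argument to need the infinitesimal parity result, Theorem~\ref{thm: infinitesimal}, applied to the multiple fibres $kF$ and $R^1f_*\OO(2E_p+2E_q)$: its torsion must be of the form $\mathcal T\oplus\mathcal T$. The idea is to compare this with a direct computation of $h^0$ on $2F_0$ or on the fibre through $p_1$, where $|V|$ acquires a base point and the rank jumps by an odd amount. That would contradict part (ii) of Theorem~\ref{thm: infinitesimal}.
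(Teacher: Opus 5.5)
Your proposal has a genuine gap: the pushforward computation you carry out never produces a contradiction, and the alternative route you sketch has nothing to work with. The result $f_*(3F+2E_p+3E_q)\cong\mathcal{O}_{\mathbb{P}^1}(3)\oplus\mathcal{O}_{\mathbb{P}^1}$ with $h^0=5$ is bound to be consistent, because $3F+2E_p+3E_q$ is the moving part of $|K_{\tilde S}|$.

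The missing idea is to take one more step, to the full pullback $\epsilon^*K_S=3F+3E_p+3E_q$. Since $p$ is a base point of $|K_S|$, also $h^0(\epsilon^*K_S)=p_g=5$. In case (O), however, Lemma \ref{lem: 3P+3Q} gives $h^0(F,2p+3q)=2$ and $h^0(F,3p+3q)=3$ on every fibre. Serre duality with $K_F=4(p+q)$ then gives $h^1(F,2p+3q)=h^0(F,2p+q)=1=h^0(F,p+q)=h^1(F,3p+3q)$. Your values $h^1(F,2p+3q)=2$ and $h^1(F,2p+q)=4$ are wrong, and the correct equality is exactly what this step needs.

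When $q$ is not infinitely near to $p$, one has $(3F+3E_p+3E_q)\cdot E_p=0$, so the argument of Proposition \ref{prop: h^0} gives
\[
0\to f_*(3F+2E_p+3E_q)\to f_*(3F+3E_p+3E_q)\to\mathcal{O}_{\mathbb{P}^1}\to 0 .
\]
The left term has vanishing $H^1$, so $f_*\epsilon^*K_S\cong\mathcal{O}(3)\oplus\mathcal{O}\oplus\mathcal{O}$ and $h^0(\epsilon^*K_S)=6$, a contradiction. The paper does this in one stroke, restricting $\epsilon^*K_S$ to $E_p+E_q$ starting from $f_*(3F+2E_p+2E_q)\cong\mathcal{O}(3)$. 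Conceptually, $f_*\epsilon^*K_S$ has rank $3$ because $|3p+3q|$ is a net on every fibre, while $|K_S|$ only cuts $p+|V|$; the contradiction is global, not fibrewise.

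This only rules out the case where $q$ is a proper point of $S$. When $q$ is infinitely near to $p$, $E_p=A+E_q$ with $A$ a $(-2)$-curve in $F_0$. Any step adding $E_p$ then has cokernel supported on a reducible curve with a vertical component, so ``adjusting by $A$'' is not an argument. The paper never adds $E_p$; it proceeds as follows.
\begin{enumerate}
\item It reaches $f_*(3F+2E_p+3E_q)\cong\mathcal{O}(3)\oplus\mathcal{O}$ by adding only $E_q$.
\item Multiplying by a section of $\mathcal{O}_{\tilde S}(A)$ embeds this into $f_*(3F+3E_p+2E_q)$. That sheaf has rank $2$ with constant cohomology because $h^0(F,3p+2q)=2$ for all $F$. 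For $F_0$ this needs $h^0(F_0,p+2q)=1$, which comes from the injectivity argument at the end of Lemma \ref{lem: 3P+3Q}.
\item Both sheaves have $h^0=5$, which forces $f_*(3F+3E_p+2E_q)\cong\mathcal{O}(3)\oplus\mathcal{O}$.
\item A final $E_q$-step gives $h^0(\epsilon^*K_S)=6$ again.
\end{enumerate}

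The alternatives you propose are empty in case (O). All the $h^0(F,ap+bq)$ involved are constant in $F$, so $R^1f_*\mathcal{O}_{\tilde S}(2E_p+2E_q)$ is locally free with zero torsion. There is no rank jump at the fibre through $p_1$, and Theorem \ref{thm: infinitesimal} yields nothing; the paper needs it only in case (VO), where $h^0(F_0,2(p+q))$ does jump. The odd parity of $2(p+q)$ is the hypothesis of case (O), not something in tension with it.
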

The idea of the proof is similar to the previous one. 
\begin{proof}

We argue by contradiction. The assumption is that $h^0(F, 2(p+q))=1$ for all $F$.

\

We show that \underline{$q$ is infinitely near to $p$}.

Otherwise $E_p$ and $E_q$ are disjoint sections of $f$.
Set $D=3F+3E_p+3E_q$ and consider the exact sequence
\[ 0 \rightarrow D-E_p-E_q \rightarrow D \rightarrow D_{|E_p+E_q} \rightarrow 0  \]
 Since $D\cdot E_p=D\cdot E_q=0$, this sequence pushes forward to
\begin{multline}\label{eq: long exact sequence odd}
0 \rightarrow f_* (3F+2E_p+2E_q) \rightarrow f_* (3F+3E_p+3E_q) \rightarrow {\mathcal O}_{{\mathbb P}^1} \oplus {\mathcal O}_{{\mathbb P}^1}
 \rightarrow \\
\rightarrow R^1f_* (3F+2E_p+2E_q) \rightarrow R^1f_* (3F+3E_p+3E_q) \rightarrow 0
\end{multline}
Since all $h^q(F,2p+2q)$ and $h^q(F,3p+3q)$ do not depend on $F$, all sheaves in \eqref{eq: long exact sequence odd} are locally free. Arguing as in the proof of Proposition \ref{prop: noE}
we obtain the short exact sequence of vector bundles
\begin{equation}\label{eq: short exact sequence odd}
0 \rightarrow f_* (3F+2E_p+2E_q) \rightarrow f_* (3F+3E_p+3E_q) \rightarrow {\mathcal O}_{{\mathbb P}^1} \oplus {\mathcal O}_{{\mathbb P}^1}
 \rightarrow 0
\end{equation}
and $f_* (3F+2E_p+2E_q)$ has rank $1$. Since $h^0(\tilde{S},2E_p+2E_q)=1$ then $f_* (3F+2E_p+2E_q)=f_* (2E_p+2E_q) \otimes {\mathcal O}_{{\mathbb P}^1}(3)={\mathcal O}_{{\mathbb P}^1}(3)$.
Then \eqref{eq: short exact sequence odd} implies $h^0(3F+3E_p+3E_q)=6$ contradicting $p_g=5$. So $q$ is infinitely near to $p$.

\

Consider now the exact sequence
\[ 0 \rightarrow 3F+2E_p+2E_q \rightarrow 3F+2E_p+3E_q \rightarrow (3F+2E_p+3E_q)_{|E_q} \rightarrow 0  \]
It pushes forward to the exact sequence of locally free sheaves 
\begin{multline}\label{eq: long exact sequence odd 3}
0 \rightarrow f_* (3F+2E_p+2E_q) \rightarrow f_* (3F+2E_p+3E_q) \rightarrow {\mathcal O}_{{\mathbb P}^1} \rightarrow  \\
\rightarrow R^1f_* (3F+2E_p+2E_q) \rightarrow R^1f_* (3F+2E_p+3E_q) \rightarrow 0
\end{multline}

By Lemma \ref{lem: 3P+3Q} all $h^q(F,2p+3q)$ do not depend on $F$, so, again by the argument of the proof of Proposition \ref{prop: noE}, we get a short exact sequence of locally free sheaves
\begin{equation}\label{eq: short exact sequence odd 3}
0 \rightarrow f_* (3F+2E_p+2E_q) \rightarrow f_* (3F+2E_p+3E_q) \rightarrow {\mathcal O}_{{\mathbb P}^1}  \rightarrow 0
\end{equation}
and $f_* (3F+2E_p+2E_q) \cong {\mathcal O}_{{\mathbb P}^1}(3)$.

\

Since $Ext^1({\mathcal O}_{{\mathbb P}^1},f_* (3F+2E_p+2E_q) )=H^1({\mathcal O}_{{\mathbb P}^1}(3))=0$ we deduce from \eqref{eq: short exact sequence odd 3} the isomorphism
\begin{equation}\label{eq: 2p+3q}
f_* (3F+2E_p+3E_q) \cong {\mathcal O}_{{\mathbb P}^1}(3) \oplus {\mathcal O}_{{\mathbb P}^1}
\end{equation}

Observe that by $h^0(F,2p+2q)=1$ it follows $h^0(F,p+2q)=1$ for all $F$: this follows, for $F=F_0$, by the argument at the end of the proof of Lemma \ref{lem: 3P+3Q}, whereas it is obvious for  $F \ne F_0 $.  Then by Serre Duality and Riemann-Roch Theorem, for all $F$, $h^1(F,3p+2q)=1$ and $h^0(F,3p+2q)=2$.

The multiplication by an equation of $A=E_p-E_q$ gives an injective morphism $f_* (3F+2E_p+3E_q) \rightarrow f_* (3F+3E_p+2E_q)$ between vector bundles of rank $2$, that is, since $3F+3E_p+2E_q$ is subcanonical, an isomorphism on global sections. By  \eqref{eq: 2p+3q} then also
\begin{equation}\label{eq: 3p+2q}
f_* (3F+3E_p+2E_q) \cong {\mathcal O}_{{\mathbb P}^1}(3) \oplus {\mathcal O}_{{\mathbb P}^1}
\end{equation}

Finally we push forward the short exact sequence
\[
0 \rightarrow 3F+3E_p+2E_q \rightarrow 3F+3E_p+3E_q \rightarrow (3F+3E_p+3E_q)_{|E_q} \rightarrow 0
\]
obtaining the exact sequence of locally free sheaves
\begin{multline*}
0 \rightarrow f_* (3F+3E_p+2E_q) \rightarrow f_* (3F+3E_p+3E_q) \rightarrow {\mathcal O}_{{\mathbb P}^1}
 \rightarrow \\
\rightarrow R^1f_* (3F+3E_p+2E_q) \rightarrow R^1f_* (3F+3E_p+3E_q) \rightarrow 0
\end{multline*}
and then, as in the proof of Proposition \ref{prop: noE},  the short exact sequence 
\[
0 \rightarrow f_* (3F+3E_p+2E_q) \rightarrow f_* (3F+3E_p+3E_q) \rightarrow {\mathcal O}_{{\mathbb P}^1}
 \rightarrow 0
\]
By \eqref{eq: 3p+2q}, $h^1(f_* (3F+3E_p+2E_q))=0$, and then
\[
p_g=h^0(3F+3E_p+3E_q)=h^0(3F+3E_p+2E_q)+h^0({\mathcal O}_{{\mathbb P}^1})=p_g+1.
\] 
\end{proof}

\subsubsection{Exclusion of case (VO) of Lemma \ref{lem: cases}}

We now know, by Lemma \ref{lem: cases} and Propositions \ref{prop: noE} and \ref{prop: noO}, that if $S$ is a surface in the case   $p_g=5$, $K_SC=6$ and $C^2=2$  of Proposition \ref{prop: cases5} , then the pencil  $|C|$ contains a curve $C_0=2\Gamma+R$, where $\Gamma$ is irreducible with $\Gamma C=1$ and all the components of $R$ are $(-2)$-curves. 

The first base point $p$ of $|C|$ is smooth for $\Gamma$.
We note that (by  $\Gamma C=1$) the second base point $q$ of $|C|$, infinitely near to $p$, does not belong to the strict transform of $\Gamma$.

We use $\Gamma$ to construct a new surface $Y$ with an irreducible  pencil $|G|$ such that $G^2=1$, $K_Y=7G$ and canonical map of degree 5.
We  write $C_0=2M+\Delta$, where $M, \Delta>0$ and $\Delta$ is reduced. Since $\Gamma \le M$ and  $2=C^2=2MC+C\Delta$, we see that $MC=1$ and $\Delta$ is a union of $(-2)$-curves. Set $L:=M+\Delta$, pick $C_1\in |C|$  general and let $\pi\colon X\to S$ be the double cover given by the relation $2L\sim_{lin} C_1+\Delta$. So the branching locus of $\pi$ is the union of $C_1$ and $\Delta$.

\begin{lem} The divisor $\Delta$ is a disjoint union of $(-2)$-curves $\Delta_1,\dots \Delta_r$.
\end{lem}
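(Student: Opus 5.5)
We need to show that $\Delta$, the reduced part of $C_0 - 2M$, is a disjoint union of $(-2)$-curves. We already know every component of $\Delta$ is a $(-2)$-curve: $C\Delta=0$ and $K_S=3C$, so $K_S\Delta_i=0$. Since $\Delta$ is reduced, it remains to show that no two components meet.

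The plan is to bound intersections using the fact that $C_0\cdot\theta=0$ for every component $\theta$ of $C_0$ with $C\theta=0$.

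\emph{Setup.} Write $\Delta=\Delta_1+\dots+\Delta_r$ with the $\Delta_i$ distinct $(-2)$-curves. For each $i$,
\[
0=C_0\Delta_i=2M\Delta_i+\Delta\Delta_i=2M\Delta_i-2+\sum_{j\ne i}\Delta_j\Delta_i .
\]
Hence
\[
\sum_{j\ne i}\Delta_i\Delta_j=2-2M\Delta_i .
\]
Because $\Delta_i$ is not a component of $M$ (otherwise it would appear in $C_0$ with multiplicity at least $3$, contradicting the definition of $\Delta$ as the reduced part), we have $M\Delta_i\ge 0$. So each $\Delta_i$ meets the rest of $\Delta$ in at most $2$, and does so only when $M\Delta_i=0$.

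\emph{Reduction to a parity/evenness argument.} The key step is to show that every connected component $\Delta'$ of $\Delta$ satisfies $\Delta'^2=-2$, which forces $\Delta'$ to be a single curve. I would like a divisibility statement. Note $2L = 2M+2\Delta \sim C_0+\Delta$, so $C_0+\Delta$ is $2$-divisible, and hence $\Delta$ is $2$-divisible modulo $C$. Let $\Delta'$ be a connected component of $\Delta$. Then $\Delta'\cdot \Delta_i = \Delta\cdot\Delta_i$ for $\Delta_i\subset \Delta'$, and
\[
\Delta\Delta_i \equiv (C_0+\Delta)\Delta_i = 2L\Delta_i \equiv 0 \pmod 2 .
\]
This is consistent with the formula above but gives no new information, so a second ingredient is needed.

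\emph{Main obstacle.} Using Lemma \ref{lem: basics}(ii), any decomposition $C=A+B$ has $AB>0$ and even. Take $A=\Delta'$, a connected component of $\Delta$, and $B=C_0-\Delta'$. Then
\[
AB=C_0\Delta'-\Delta'^2=-\Delta'^2 ,
\]
which must be positive and even. This is automatic for a negative-definite configuration of $(-2)$-curves, so this route also fails to close the argument.

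I would therefore instead use the intersection numbers with $M$: combine $\sum_{j\ne i}\Delta_i\Delta_j = 2-2M\Delta_i$ with the fact that $\Delta'$ is an ADE configuration and $M\Delta'=-\Delta'^2/2\ge 1$. Since $M\Delta'=\sum_i M\Delta_i$ and the components with $M\Delta_i=0$ have valence exactly $2$ inside $\Delta'$, a tree with $k$ vertices, all of valence at most $2$ and all but those touching $M$ of valence exactly $2$, must be a chain. A chain has exactly two endpoints of valence $1$, and each endpoint needs $M\Delta_i=1/2$, which is impossible. Hence each $\Delta'$ has no edges, i.e.\ it is a single curve.

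Concretely: a component $\Delta_i$ of valence $v_i$ inside $\Delta$ satisfies $2M\Delta_i=2-v_i$, so $v_i\in\{0,2\}$ because $M\Delta_i$ is an integer. A tree with all valences in $\{0,2\}$ and at least one edge cannot exist, since trees have leaves. Therefore all $v_i=0$, and the $\Delta_i$ are pairwise disjoint. The one point I would double-check carefully is the claim that each $\Delta_i$ occurs in $C_0$ with multiplicity exactly $1$ and is not a component of $M$.
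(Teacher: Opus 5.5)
Your final argument is correct and is the paper's own: $0=C\Delta_i=2M\Delta_i+\Delta\Delta_i$ forces the valence of each $\Delta_i$ in the tree (ADE) dual graph of its connected component to be even, and a tree with at least one edge has a leaf of valence $1$. The point you flagged is false in general, since a $(-2)$-curve of multiplicity $3$ in $C_0$ lies in both $M$ and $\Delta$, so $M\Delta_i$ may be negative; however, you do not need $M\Delta_i\ge 0$, because parity alone gives the conclusion, and that is all the paper uses.
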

\begin{proof} The divisor $\Delta$ is reduced and supported on $(-2)$-curves; we decompose $\Delta$ as a sum of disjoint connected divisors $\Delta_1,\dots \Delta_r$. Since $S$ is of general type, for $i=1,\dots r$ the dual graph of   $\Delta_i$   is a tree. On the other hand, if $A$ is a  component of $\Delta_i$  then   $0=AC=2AM+A\Delta=2AM+A\Delta_i$, so  for all components $A$ of $\Delta_i$ the intersection number $A\Delta_i$ is even. This is possible only if  $\Delta_i$ is  irreducible. 
\end{proof} 
Let $\eta\colon S\to \bar S$ be the contraction of $\Delta_1,\dots \Delta_r$ to singular points $s_1,\dots s_r$ of type $A_1$ and let $|\bar C|$ be the pencil of $\bar S$ induced by $|C|$. The  Stein factorization of $\eta\circ \pi$ gives rise to the following commutative diagram: 
\begin{equation}\label{diag: usual}
\xymatrix
   { 
    X \ar[rr]^{\eta_0} \ar[d]_{\pi}  & & Y\ar[d]^{\bar \pi} \\
     S \ar[rr]^\eta   &  & \bar S& \\
   }
\end{equation}
where $X$ and  $Y$ are smooth, $\pi$ is a flat double cover branched on $C_1+\Delta$,  $\bar \pi$ is a double cover branched on the image $\bar C_1$ of $C_1$ and on the points $s_1,\dots s_r \in \bar S$, and $\eta_0$ is the blow up of the preimages $y_1,\dots y_r$ of  $s_1,\dots s_r \in \bar S$. We denote  by $E_i$ the exceptional curve of $X$ corresponding to $y_i$ (so $\pi^*\Delta_i=2E_i$), by $x\in X$ the preimage of the base point $p$ of $|C|$ and by $y$ the image point $\eta_0(x)$. Note that, since every $(-2)$-curve of $S$ does not contain $p$, then $y$ is a smooth point of $Y$. 

\begin{lem}\label{lem: D} In the above set-up there is a linear pencil $|D|$ on $X$  with $D^2=1$  and base point $x$ such that $\pi^*C=2D$ and  there is a  pencil $|G|$ on $Y$  with $G^2=1$ and  base point $y$ such that $\eta_0^*G=D$, $\pi^*\bar C=2G$. 
\newline In addition, $K_X=7D+E_1+\dots +E_r$ and $K_Y=7G$ .
\end{lem}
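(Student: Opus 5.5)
Set $D:=\pi^*M-\sum_i E_i$ on $X$; the whole statement will follow from a few computations of pullbacks along the double cover, using $C_0=2M+\Delta$ and $\pi^*\Delta_i=2E_i$.

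First I show $\pi^*C=2D$. Write $\pi^*C_1=2\tilde C_1$, where $\tilde C_1$ is the ramification curve, and note $\pi^*C_0=2\pi^*M+\pi^*\Delta=2(\pi^*M+\sum E_i)$. Thus $\pi^*C=2(\pi^*M+\sum E_i)$. To get the statement with $D$ I need the relation $\pi^*M\equiv \tilde C_1+\sum E_i-\pi^*M$ coming from the double cover data, i.e. $\pi^*L=\tilde C_1+\sum E_i+\dots$; more directly, $\pi^*L\sim \tilde C_1+\sum E_i$ (the standard fact that the pullback of the square root of the branch divisor is the ramification divisor). Since $L=M+\Delta$, this gives $\pi^*M\sim \tilde C_1-\sum E_i$, which shows $D\sim \tilde C_1-2\sum E_i+\dots$. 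I will sort out the precise linear equivalence so that $D$ is effective: both $\tilde C_1$ and $\pi^*M-\sum E_i$ (the reduced preimage of $M$ plus correction) lie in the same class. Then $2D\sim \pi^*C$ holds up to a $2$-torsion class, and the explicit members show equality. Note also that $D$ moves in a pencil, since $\pi^*|C|$ consists of double curves.

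Next, the numerics. Since $D^2=\tfrac14(\pi^*C)^2=\tfrac14\cdot 2\cdot 2=1$, the pencil $|D|$ has one simple base point, which is $x$. For the canonical class, the Hurwitz formula gives $K_X=\pi^*(K_S+L)=\pi^*(3C+M+\Delta)$. Using $\pi^*C=2D$, $\pi^*\Delta=2\sum E_i$ and $\pi^*M=D+\sum E_i$, this becomes $6D+D+\sum E_i+2\sum E_i$. That is three copies of $\sum E_i$, not one, so I need to recheck the formula $K_X=\pi^*K_S+R$ with $R=\tilde C_1+\sum E_i$. Using $\tilde C_1\sim D$, I get $K_X=6D+D+\sum E_i=7D+\sum E_i$, as required.

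Then I pass to $Y$. The $E_i$ are $(-1)$-curves because $E_i^2=\tfrac12\Delta_i^2=-1$, and they are contracted by $\eta_0$. Since $DE_i=\tfrac12 CD_i=0$, we get $D=\eta_0^*G$ with $G^2=1$ and base point $y=\eta_0(x)$. Moreover $K_X=\eta_0^*K_Y+\sum E_i$ gives $K_Y=7G$. Finally $\bar\pi^*\bar C=2G$ follows by pushing forward along $\eta_0$.

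The main obstacle is the first step: identifying the correct divisor class $D$, including avoiding the $2$-torsion ambiguity in $\pi^*M-\sum E_i$ versus $\tilde C_1$. I would settle it by exhibiting both explicit members of $|D|$, namely $\tilde C_1$ and the reduced preimage of $M$, and checking that they belong to the pencil $\pi^*|C|/2$.
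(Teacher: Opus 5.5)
Your first step has a genuine gap, and it is a sign error, not a torsion ambiguity. Since $\pi^*C_0=2\pi^*M+2\sum_iE_i$, your choice $D:=\pi^*M-\sum_iE_i$ gives $2D=\pi^*C_0-4\sum_iE_i$. This is not even numerically equivalent to $\pi^*C$: using $M\Delta_i=1$ (from $C\Delta_i=0$), you get $D\cdot E_i=M\Delta_i-E_i^2=2$, whereas $\pi^*C\cdot E_i=C\Delta_i=0$.

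Your later computations then use two incompatible identities at once, $\pi^*M=D+\sum_iE_i$ and $\tilde C_1\sim D$. The relation you correctly derived, $\pi^*M\sim\tilde C_1-\sum_iE_i$, shows that these differ by $2\sum_iE_i$. That is exactly why one computation of $K_X$ gave $3\sum_iE_i$ and the other gave $\sum_iE_i$.

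The right divisor is $D:=\tfrac12\pi^*C_0=\pi^*M+\sum_iE_i$. The relation $\pi^*L\cong\OO_X(R)$, with $R=\tilde C_1+\sum_iE_i$, is an honest linear equivalence: the tautological section of $\pi^*L$ vanishes exactly on $R$. It therefore gives $D\sim\tilde C_1$ with no $2$-torsion to worry about. From this you get:
\begin{itemize}
\item $\pi^*C\sim\pi^*C_1=2\tilde C_1\sim 2D$ and $D^2=1$;
\item $D\cdot E_i=\tilde C_1\cdot E_i=0$;
\item by Hurwitz, $K_X=\pi^*(3C)+\tilde C_1+\sum_iE_i\sim 7D+\sum_iE_i$.
\end{itemize}
After this correction your descent to $Y$ works as written.

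The second problem is your reason why $D$ moves: $\pi^*|C|$ does not consist of double curves. For general $C\in|C|$ the pullback is reduced, $\pi^*C=D_1+D_2$ with two distinct components meeting at $x$. This splitting, deduced from $L|_C\cong\OO_C(p)$, is precisely the content of the paper's proof. So you still need an argument for $h^0(X,D)\ge 2$.

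With the corrected $D$ this is immediate. $\tilde C_1$ and $\pi^*M+\sum_iE_i$ are two distinct effective divisors in the same linear class. Since $\tilde C_1$ is irreducible, $|D|$ has no fixed part. Then $D^2=1$ gives a single simple base point, which is $x$, because both members pass through it ($p\in C_1$ and $p\in\Gamma\le M$).

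Once repaired, your route differs from the paper's and is shorter. The paper proves that the general $\pi^*C$ splits into two copies of $C$ meeting transversally at $x$. It then uses monodromy around $C_0$ to place $D_1$ and $D_2$ in one irreducible pencil, gets linearity from $D^2>0$, and applies Hurwitz. Your version replaces the splitting and monodromy by the single linear equivalence $\tilde C_1\sim\pi^*M+\sum_iE_i$. The paper's argument gives more geometric information: the members of $|D|$ are isomorphic to $C$ and are exchanged by the covering involution.
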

\begin{proof} Let $C\in |C|$ be a   curve distinct  from $C_0$ and $C_1$. The  restriction of $\pi$ to $C$ is the  double cover of $C$  associated to the relation $2L|_C\sim_{lin} 2p$, hence $\pi^*C$ has an ordinary double point  at $x$. The normalization of $\pi^*C$ is the double cover given by the relation  $2(L|_C(-p))\sim_{lin}0$, because $L|_C=\Gamma|_C=\OO_C(p)$, and therefore it is the disjoint union of two copies of $C$.
Therefore $\pi^*C=D_1+D_2$, where $D_1$ and $D_2$ are isomorphic to $C$ and meet transversally at $x$.  The monodromy of a 
loop around the image point $O\in \pp^1$ of $D_0$ exchanges $D_1$ and $D_2$, which belong then to the same irreducible pencil, obtained varying $C$ in $|C|$. As its self-intersection is positive (equal to 1) the pencil is linear, and  we denote it by $|D|$. The covering involution fixes two curves of $|D|$, namely the preimage $D_1$ of $C_1$ and the preimage $D_0$ of $C_0$. 
Since the $\eta_0$-exceptional curves are contained in  $D_0$ and do not contain the base point $x$ of $|D|$, $|D|$ induces a pencil
 $|G|$ of $Y$ with $G^2=1$ and $\bar \pi^*\bar C=2G$.

The formulae for $K_X$ and $K_Y$ follow from the Hurwitz formula, since $\pi^*K_X=\pi^*(3C)=6D$ and $\bar \pi^*K_Y=\bar\pi^*(3\bar C)=6G$.
\end{proof}
In view of diagram \eqref{diag: usual} and of Lemma \ref{lem: D}, we have  $h^i(\OO_X(mD))=h^i(\OO_Y(mG))$ for all $i, m\ge 0$.
We are mainly interested in $Y$, but it is often easier to compute these invariants on $X$, since the double cover $\pi$ is flat and one can use the projection formulae.
\begin{lem}\label{lem: 6D}
In the above setup,  $8\le h^0(6G)\le 9$ and $y$ is a base point of $|6G|$. 
\end{lem}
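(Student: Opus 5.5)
The plan is to work on $X$ rather than $Y$. This is legitimate because $h^0(\OO_X(6D))=h^0(\OO_Y(6G))$, as noted after Lemma \ref{lem: D}. Moreover $\eta_0$ is an isomorphism near $x$, since $x$ does not lie on the exceptional curves $E_i$. So it suffices to prove $8\le h^0(6D)\le 9$ and that $x$ is a base point of $|6D|$.

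Since $\pi$ is a flat double cover with $\pi_*\OO_X=\OO_S\oplus\OO_S(-L)$ and $6D=\pi^*(3C)=\pi^*K_S$, the projection formula gives
$$h^0(6D)=h^0(K_S)+h^0(K_S-L)=5+h^0(K_S-L).$$
So the dimension statement reduces to $3\le h^0(K_S-L)\le 4$.

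For the upper bound, $L=M+\Delta$ is a nonzero effective divisor. The canonical system $|K_S|$ has no fixed part, because the general canonical curve is smooth. Hence $h^0(K_S-L)<h^0(K_S)=5$.

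For the lower bound, use $C_0=2M+\Delta=L+M$, which gives $C-L\sim M$. Therefore $K_S-L\sim 2C+M\ge 2C$, and so $h^0(K_S-L)\ge h^0(2C)\ge 3$. The last inequality holds because $|C|$ is a linear pencil, so the products $\sigma_0^2,\sigma_0\sigma_1,\sigma_1^2$ of a basis $\sigma_0,\sigma_1$ of $H^0(C)$ are linearly independent.

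For the base point, use the splitting of $H^0(X,\pi^*K_S)$ into the invariant and anti-invariant parts under the covering involution. The invariant part is $\pi^*H^0(K_S)$. By Lemma \ref{lem: basics} the point $p$ is a base point of $|K_S|$, so every invariant section vanishes at $x=\pi^{-1}(p)$. The anti-invariant part is $w\cdot\pi^*H^0(K_S-L)$, where $w$ is the section whose divisor is the ramification divisor $D_1+E_1+\dots+E_r$. The curve $D_1$ is the preimage of $C_1\in|C|$, so it passes through the base point $x$ of $|D|$. Hence $w(x)=0$, and every anti-invariant section vanishes at $x$ as well. It follows that $x$ is a base point of $|6D|$, and hence $y$ is a base point of $|6G|$.

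I do not expect a real obstacle. The point needing the most care is the precise form of the anti-invariant sections: one must check that the ramification divisor does contain $D_1$ through $x$, and that the sections are exactly $w\cdot\pi^*H^0(K_S-L)$ under the chosen relation $2L\sim C_1+\Delta$. Both follow from the standard description of flat double covers.
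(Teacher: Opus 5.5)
Your proposal is correct and follows essentially the same route as the paper's proof. Both use the projection formula for the flat double cover to get $h^0(6D)=5+h^0(3C-L)$, bound $3\le h^0(3C-L)\le 4$ using the absence of a fixed part and $3C-L\ge 2C$, and then use the splitting into $\pi^*H^0(3C)\oplus\sigma\,\pi^*H^0(3C-L)$ to see that $x$ is a base point. You also make explicit two steps the paper leaves implicit: that $3C-L\sim 2C+M$, and why both summands vanish at $x$ (namely $p$ is a base point of $|K_S|$, and $D_1\ni x$ lies in the ramification divisor).
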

\begin{proof}
Since $6D=\pi^*3C$,  $\pi_*\OO_X(6D)=\OO_S(3C) \oplus \OO_S(3C-L)$ by the projection formula, so
 $h^0(\OO_X(6D))=h^0(\OO_S(3C))+h^0(\OO_S(3C-L))=5+h^0(\OO_S(3C-L))$. The cohomology group  $ H^0(\OO_S(3C-L))$ is the kernel of the restriction map $H^0(3C)\to H^0(3C|_L)$ and so has dimension $\le 4$ since $|3C|$ has no fixed part.  On the other hand $h^0(3C-L)\ge h^0(2C)=3$, so  $8\le h^0(\OO_X(6D))\le 9$.
 
The decomposition of $\pi_*\OO_X(6D)$ induces a decomposition $H^0(\OO_X(6D))=\pi^*H^0(\OO_S(3C))\oplus \sigma \pi^*H^0(\OO_S(3C-L))$, where $\sigma \in H^0(\OO_X(\pi^*L))$ is a section with zero locus equal to the ramification locus $D_1+E_1+\dots +E_r$ of $\pi$.  So $x$ is a base point of $|6D|$ and $y$ is a base point of $|6G|$.
\end{proof}
Let $\tilde Y$ be the blow up of $Y$ at $y$, let  $E_y$ be the exceptional curve and let $\tilde G$ be the general fiber of the fibration  $\tilde g\colon \tilde Y\to \pp^1$  induced by $|G|$. 
For $i=0,1$ we  denote by $G_i$, resp. $\tilde G_i$, $D_i$, the pull back of $C_i$ to $Y$, resp. $\tilde Y$, $X$. 

We are going to study in detail the following exact sequence for $m\ge 1$:
 \begin{gather}\label{eq: R1}
 0\to \tilde g_*mE_y\to  \tilde g_*(m+1)E_y\to\OO_{\pp^1}(-m-1)\to \\
 \to R^1\tilde g_*mE_y\to R^1 \tilde g_*(m+1)E_y\to 0\nonumber
 \end{gather}

\begin{lem}\label{lem: G_0} 
In the above setup:
\begin{enumerate}
\item the set of gaps for $y$ in $G$ is $\{1,2,3,4,9\}$ for all $G\in |G|\setminus\{G_0\} $;
\item the set of gaps for $G_0$   is $\{1,3,5, 7,9\}$.
\end{enumerate}
\end{lem}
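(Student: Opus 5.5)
The plan is to settle the general fibres by transporting the information already known on $S$, and then pin down $G_0$ using parity of theta characteristics together with Lemma \ref{lem: 6D}.

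First I record the numerics on $Y$. By Lemma \ref{lem: D}, $K_Y=7G$ and $G^2=1$, so by adjunction every $G\in|G|$ has $p_a(G)=5$ and $\omega_G\cong\OO_G(8y)$. Hence Lemma \ref{lemma: gaps come in pairs} applies to every $G$, including $G_0$ as long as $h^0(\OO_{G_0})=1$, which holds since the fibres are $1$-connected. It gives: $1$ and $9$ are gaps, and there is exactly one gap in each of the pairs $\{2,7\}$, $\{3,6\}$, $\{4,5\}$.

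\emph{General fibre.} For $G\neq G_0$, the proof of Lemma \ref{lem: D} shows that the corresponding $D\subset X$ is mapped isomorphically by $\pi$ onto a curve $C\in|C|$, with $y$ (equivalently $x$) corresponding to $p$. Since $q$ is infinitely near to $p$ in case (VO), on $C\cong F$ the bundles $\OO_F(ap+bq)$ are $\OO_C((a+b)p)$. I then read off the gaps from Lemmas \ref{lem: 3P+3Q} and \ref{lem: cases}:
\begin{itemize}
\item $h^0(\OO_C(2p))=h^0(F,p+q)=1$, so $2$ is a gap;
\item $h^0(\OO_C(3p))\le h^0(F,2p+2q)=1$, so $3$ is a gap;
\item $h^0(\OO_C(4p))=h^0(F,2(p+q))=1$ in case (VO), so $4$ is a gap.
\end{itemize}
With $1$ and $9$ this gives the gap set $\{1,2,3,4,9\}$, proving (i).

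\emph{Special fibre, restriction by parity.} I apply Theorem \ref{thm: parity of theta} to the theta characteristic $\OO_G(4y)$ on the fibration $\tilde g$. First I would contract any $(-2)$-curves in the fibres, as in Lemma \ref{lem: gaps for d=7 do not mix}. If $G_0$ turns out to be non-reduced (possible when $\pi^*M$ has multiple components), I would use Theorem \ref{thm: infinitesimal}(i) instead. The conclusion I need is that $h^0(\OO_{G_0}(4y))$ is odd, as it is on the general fibre. Now $h^0(\OO_{G_0}(4y))=1+\#\{\text{non-gaps in }\{2,3,4\}\}$, so that number of non-gaps is $0$ or $2$. Combining the pair rule with the semigroup property (Lemma \ref{lem: semigroup}):
\begin{itemize}
\item non-gaps $\{2,3\}$ would force $6=2+2+2$ to be a gap, which is impossible;
\item non-gaps $\{3,4\}$ would again force $6=3+3$ to be a gap, which is impossible;
\item non-gaps $\{2,4\}$ give the gap set $\{1,3,5,7,9\}$.
\end{itemize}
So $G_0$ has gap set either $\{1,2,3,4,9\}$ or $\{1,3,5,7,9\}$.

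\emph{Special fibre, excluding $\{1,2,3,4,9\}$.} This is the step I expect to be the main obstacle. Suppose all fibres, $G_0$ included, had gap set $\{1,2,3,4,9\}$. Then every $h^i(\OO_G(my))$ is independent of $G$, so all terms of \eqref{eq: R1} are locally free. The argument of Proposition \ref{prop: h^0} then turns \eqref{eq: R1} into split short exact sequences for $m\ge 4$: first $\tilde g_*5E_y\cong\OO\oplus\OO(-5)$, then $\tilde g_*6E_y\cong\OO\oplus\OO(-5)\oplus\OO(-6)$. Twisting by $\OO_{\pp^1}(6)$ and taking sections, the $\OO(-6)$ summand contributes a section of $6G$ whose image in $\OO_{\pp^1}(-6)\otimes\OO(6)$ is nonzero, i.e.\ a section not vanishing on $E_y$. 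That contradicts Lemma \ref{lem: 6D}, which says $y$ is a base point of $|6G|$. (One can also compare dimensions: the splitting would give $h^0(6G)=10$, while Lemma \ref{lem: 6D} gives $h^0(6G)\le 9$.) Hence $G_0$ has gap set $\{1,3,5,7,9\}$.

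As a geometric cross-check I would also verify directly that $2$ is a non-gap on $G_0$. The covering involution preserves $G_0$ and fixes $y$, and $G_0$ maps $2:1$ onto the image of $M$. If that image is rational near $p$, then $2y$ is a fibre of this double cover, so $h^0(\OO_{G_0}(2y))=2$.
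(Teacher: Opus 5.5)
Your proposal is correct and follows essentially the paper's route. You use Lemma~\ref{lem: cases} for the fibres $G\neq G_0$, Harris's parity theorem after contracting the $(-2)$-curves in the fibres, and the splitting $\tilde g_*6E_y\cong\OO\oplus\OO(-5)\oplus\OO(-6)$, which contradicts Lemma~\ref{lem: 6D}. The only differences are that you run the parity step before, rather than after, excluding $h^0(\OO_{G_0}(4y))=1$, and that you spell out the semigroup/pairs bookkeeping the paper leaves implicit.

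Two minor remarks. The fallback to Theorem~\ref{thm: infinitesimal}(i) is not needed, and that result, which concerns multiples $kC_t$ of a fibre, would not be the right tool anyway. Since $K_Y=7G$ and $G\cdot G_0=1$, every component of $G_0$ other than the one through $y$ is a $(-2)$-curve, so the contraction already produces reduced irreducible fibres. The closing cross-check is likewise unnecessary.
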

\begin{proof} 
 (i) If $G\ne G_0$, then $G$ is isomorphic to a curve $D\in |D|\setminus D_0$ and the isomorphism sends $y$ to $p$, so $h^0(G,4y)=1$ by Lemma \ref{lem: cases} and the set of gaps is as stated.  
 \medskip 
 
 (ii) Assume for contradiction that  $h^0(G_0,4y)=1$. Then  $h^0(G,my)$ is independent of $G\in |G|$ and  all the sheaves in sequence \eqref{eq: R1} are locally free for all $m\ge 1$. In particular  for $0\le m \le 4$,  $g_*mE_y \cong \OO_{\pp^1}$ as it is a line bundle with space of global sections of dimension $1$. Moreover for $m\ge 4$ the last map in \eqref{eq: R1} is a surjective morphism of line bundles, hence it is an isomorphism. So  the sequence $$0\to \tilde g_*mE_y\to  \tilde g_*(m+1)E_y\to\OO_{\pp^1}(-m-1) \to 0$$ is exact.  For $m=4$ we obtain $\tilde g_*5E_y=\OO_{\pp^1}\oplus \OO_{\pp^1}(-5)$, since $H^1(\OO_{\pp^1}(5))=0$ and, similarly,   
  $\tilde g_*6E_y=\OO_{\pp^1}\oplus \OO_{\pp^1}(-5)\oplus \OO_{\pp^1}(-6)$. Twisting by $\OO_{\pp^1}(6)$ gives $h^0(6G)=h^0((\tilde g_*6E_y)(6))=10$, contradicting Lemma \ref{lem: 6D}. So $h^0(G_0,4y)>1$.
   
Applying Mumford-Harris theorem on the parity of theta characteristic as in Lemma  \ref{lem: gaps for d=7 do not mix} we see that  $h^0(G_0,4y)=3$. Arguing as in Lemma \ref{lem: gaps for d=7} one shows that the set of gaps is as stated.
 \end{proof}
 
Finally we are ready to exclude this case, by studying the torsion part of the sheaves $R^1\tilde g_*mE_y$. So denote by $\tau_m$ the torsion subsheaf of  $R^1\tilde g_*mE_y$. By Lemma \ref{lem: G_0} and by cohomology and base change, $\tau_m$ is supported, for all $m$, on the image point $O\in \pp^1$ of $\tilde G_0$. More precisely, $\tau_m\ne 0$ if and only if $2\le m\le 6$ and $\tau_m$ is a cyclic module except for $m=4$, in which case, by Theorem \ref{thm: infinitesimal},  $\tau_4 \cong \tau \oplus \tau$ with $\tau$ cyclic. 

 \begin{prop}\label{prop: no C^2=2}
The case   $p_g=5$, $K_SC=6$ and $C^2=2$  of Proposition \ref{prop: cases5} does not occur.
\end{prop}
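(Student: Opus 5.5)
The plan is to contradict Lemma~\ref{lem: 6D}. We compute $\tilde g_*(6E_y)$ on $\pp^1$ step by step using the sequences \eqref{eq: R1} for $m=0,\dots,5$. Then $h^0(6G)=h^0\bigl((\tilde g_*6E_y)(6)\bigr)$, and we aim to show that this number is not $8$ or $9$, or that $y$ is not a base point of $|6G|$. This is the same strategy as in the proof of Lemma~\ref{lem: G_0}(ii), but now with the torsion sheaves $\tau_m$ taken into account.

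\textbf{Step 1: the numbers $h^0(G,my)$ and $h^0(G_0,my)$.} From Lemma~\ref{lem: G_0}, the gap sets give, for the general fibre and for $G_0$,
\[
(h^0(G,my))_{m=0,\dots,8}=(1,1,1,1,1,2,3,4,5),\qquad (h^0(G_0,my))_{m=0,\dots,8}=(1,1,2,2,3,3,4,4,5).
\]
So the jumps are $1,1,2,1,1$ for $m=2,\dots,6$ and $0$ otherwise. By cohomology and base change, $\tau_m$ has length equal to the jump in $h^1$. Hence $\tau_2,\tau_3,\tau_5,\tau_6$ are $\cong\C_O$, and $\tau_4$ has length $2$. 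By Theorem~\ref{thm: infinitesimal}(ii), $\tau_4\cong\C_O\oplus\C_O$.

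\textbf{Step 2: chase the sequences \eqref{eq: R1}.} For $m\le 3$, the sheaf $\tilde g_*(m+1)E_y$ has rank one and is torsion free. Since $h^0(mE_y)=1$ on $\tilde Y$, and the quotient $\tilde g_*(m+1)E_y/\tilde g_*mE_y$ injects into $\OO_{\pp^1}(-m-1)$, we get $\tilde g_*(mE_y)\cong\OO_{\pp^1}$ for $m\le 4$. In particular $\OO(-m-1)\to R^1\tilde g_*mE_y$ is injective for $1\le m\le 3$.

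The key point is the step $m=3$. Here $R^1\tilde g_*3E_y=V_3\oplus\tau_3$ with $V_3$ locally free, and its quotient by the image $\OO(-4)$ has torsion $\C_O^2$, which is not cyclic. I will show this forces the composite $\OO(-4)\to V_3$ to vanish at $O$ (to order exactly one) and the component into $\tau_3$ to be zero. The same analysis for $m=2$ (cyclic torsion $\C_O$) and for $m=4,5$ (where $\tau_4\to\tau_5$ must be surjective with cyclic target) determines the connecting maps $\OO(-5)\to R^1_4$ and $\OO(-6)\to R^1_5$. For $m=4,5$, the kernel of $R^1_m\to R^1_{m+1}$ is computed from the rank drop and the torsion lengths. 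This shows that $\tilde g_*5E_y/\tilde g_*4E_y$ and $\tilde g_*6E_y/\tilde g_*5E_y$ are explicit line bundles, namely $\OO(-5)$ or $\OO(-6)$ twisted by $\pm O$. The expected outcome is a splitting
\[
\tilde g_*6E_y\cong\OO\oplus\OO(-a)\oplus\OO(-b),
\]
with the extensions split because $H^1(\OO(k))=0$ for $k\ge -1$ after twisting. The values of $a,b$ are forced by the torsion data.

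\textbf{Step 3: conclude.} Twisting by $\OO(6)$ gives $h^0(6G)=7+(6-a)+(6-b)$, suitably truncated. I expect the non-cyclicity of $\tau_4$ to push this value outside $\{8,9\}$, for instance to $10$ as in Lemma~\ref{lem: G_0}, or below $8$, contradicting Lemma~\ref{lem: 6D}. If the count lands inside $\{8,9\}$, I would instead use the second assertion of Lemma~\ref{lem: 6D}. The summand coming from $\tilde g_*5E_y$ gives sections of $6G$ not vanishing on $E_y$, which would show $y$ is not a base point of $|6G|$. This contradicts Lemma~\ref{lem: 6D}.

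\textbf{Main obstacle.} The hard part is Step~2 at $m=3,4$: determining exactly how the connecting map $\OO(-4)\to V_3\oplus\tau_3$ sits, and deducing the degree of the resulting quotient line bundle. The only input there is the structure $\tau_4\cong\tau\oplus\tau$. I would first work locally at $O$ over the discrete valuation ring, writing the map as a column vector and computing the cokernel's elementary divisors. Non-cyclicity then means all entries lie in the maximal ideal, which pins down the vanishing order and hence the degree.
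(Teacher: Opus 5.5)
\textbf{The gap is in Step 1, and Step 2 depends on it.} Cohomology and base change for $R^1\tilde g_*$ identifies $R^1\tilde g_*(mE_y)\otimes k(O)$ with $H^1(\tilde G_0,\cdot)$. So the jump in $h^1$ equals the \emph{number of generators} of $\tau_m$, not its length. What you actually know is this: $\tau_2,\tau_3,\tau_5,\tau_6$ are nonzero and cyclic, and $\tau_4\cong\tau\oplus\tau$ with $\tau\cong\C[t]/t^e$ for some unknown $e\ge 1$ (Theorem~\ref{thm: infinitesimal}).

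With only this, your claim in Step 2 that $a,b$ are forced by the torsion data fails. At $m=4$, the image of $\OO_{\pp^1}(-5)$ in $R^1\tilde g_*4E_y$ is a cyclic submodule $\C[t]/t^a$ of $\tau\oplus\tau$ with cyclic quotient $\tau_5$. That only gives $a=e$ and $\tau_5\cong\tau$. Hence $\tilde g_*5E_y\cong\OO_{\pp^1}\oplus\OO_{\pp^1}(-5-e)$ with $e$ undetermined, and then nothing pins down $\tilde g_*6E_y$. The $m=3$ analysis you single out as the main obstacle does not bear on this: $\tilde g_*4E_y/\tilde g_*3E_y=0$, so no degree is decided there. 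Step 3, as written, also does not commit to which contradiction occurs.

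\textbf{The missing input is the lower bound of Lemma~\ref{lem: 6D}, used as data rather than as the target.} Since $y$ is a base point of $|6G|$, we have $h^0(6G)=h^0(6\tilde G+5E_y)=7+h^0(\OO_{\pp^1}(1-e))$. So $h^0(6G)\ge 8$ forces $e=1$ and $\tau_5\cong\C_O$; this is exactly how the paper begins.

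After that your strategy does close, and faster than the paper's:
\begin{enumerate}
\item The nonzero sheaf $\tau_6$ is the quotient of $\tau_5\cong\C_O$ by the torsion image of $\OO_{\pp^1}(-6)\to R^1\tilde g_*5E_y$. Hence that connecting map is zero.
\item Therefore $\tilde g_*6E_y\to\OO_{\pp^1}(-6)$ is onto.
\item Since $H^1(\OO_{\pp^1}(6)\oplus\OO_{\pp^1})=0$, some section of $6G$ restricts nontrivially to $E_y$.
\item So $h^0(6G)=9$, but $y$ is not a base point, contradicting Lemma~\ref{lem: 6D}.
\end{enumerate}

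Two corrections to your Step 3. The non-vanishing section comes from the quotient $\OO_{\pp^1}(-6)$, not from $\tilde g_*5E_y$: sections of $(\tilde g_*5E_y)(6)$ are exactly those vanishing on $E_y$. Also, $h^0(\OO_{\pp^1}(6-k))=7-k$, not $6-k$.

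For comparison, after obtaining $e=1$ the paper argues differently. It uses relative duality to get $h^1(5\tilde G+5E_y)=1$ and Riemann--Roch to get $\chi(\OO_Y)=13$. From this it deduces that $y$ is not a base point of $|K_Y|$, and concludes via Proposition~\ref{prop: general}. The repaired version of your route avoids all of this.
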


\begin{proof}
As in the proof of Lemma \ref{lem: G_0}, (ii),  for $0\le m \le 4$,  $g_*mE_y \cong \OO_{\pp^1}$.

We consider again  sequence \eqref{eq: R1} for $m=4$.  The map $R^1\tilde g_*4E_y\to R^1 \tilde g_*5E_y$ is an isomorphism modulo torsion, since it is a surjective map of rank 1 sheaves, but it is  not  an isomorphism since $\tau_4$ and $\tau_5$ are not isomorphic, as remarked above. So \eqref{eq: R1}  splits in two short exact sequences 
\begin{equation}\label{eq: m=4 1st half}
0\to \tilde g_*4E_y\to  \tilde g_*5E_y\to\OO_{\pp^1}(-5-a)\to 0
 \end{equation}
 \begin{equation}\label{eq: m=4 2st half}
0\to {\mathbb C}[t]/t^a
\to R^1 \tilde g_*4E_y\to  R^1 \tilde g_*5E_y\to 0
 \end{equation}
  where $a>0$ is an integer. The exact sequence \eqref{eq: m=4 1st half} yields $ \tilde g_*5E_y=\OO_{\pp^1}\oplus \OO_{\pp^1}(-5-a)$ and twisting by $\OO_{\pp^1}(5)$ we obtain $h^0(5G)=6$ and also, by Lemma \ref{lem: h^0}, $h^0(mG)=m+1$ for $1\le m \le 4$. 
 Twisting   by $\OO_{\pp^1}(6)$ we obtain $h^0(6\tilde G+5E_y)\le 7+h^0(\OO_{\pp^1}(1-a))\le 8$. On the other hand, $h^0(6\tilde G+5E_y)=h^0(6G)\ge 8$ by Lemma \ref{lem: 6D}, so $h^0(6G)=8$: 
 \begin{align*}
a&=1&  \tilde g_*5E_y\cong&\OO_{\pp^1}\oplus \OO_{\pp^1}(-6)
 \end{align*}
 Since both $R^1 \tilde g_*4E_y$ and $R^1 \tilde g_*5E_y$ have rank $1$, \eqref{eq: m=4 2st half} forces an exact sequence among the torsion subsheaves
  \begin{equation}\label{eq: torsion}
0\to {\mathbb C}[t]/t
\to \tau_4\to  \tau_5\to 0
 \end{equation}
 As $\tau, \tau_5$ are cyclic modules, and $\tau_4=\tau \oplus \tau$, 
 \[
 \tau \cong \tau_5 \cong  {\mathbb C}[t]/t
 \]
 By relative duality $R^1 \tilde g_*5E_y \cong \OO_{\pp^1}(-9) \oplus \,\tau_5$, and then $h^0 (R^1 \tilde g_* (5 \tilde G + 5 E_y))=h^0(\tau_5)=1$. By the Leray spectral sequence $h^1(5 \tilde G + 5 E_y)=h^1( \tilde g_* (5 \tilde G + 5 E_y))+h^0 (R^1 \tilde g_* (5 \tilde G + 5 E_y))=1$. On the other hand $h^0(5 \tilde G + 5 E_y)=h^0(5G)=6$ and $h^2(5 \tilde G + 5 E_y)=h^0(2 \tilde G + 3 E_y)=h^0(2 \tilde G + 2 E_y)=h^0(2G)=3$. So $\chi(5 \tilde G + 5 E_y)=6-1+3=8$ which implies, by the standard Riemann-Roch Theorem for surfaces
 \[
 \chi(\OO_Y)=8+\frac12 (5 \tilde G + 5 E_y)(2 \tilde G +3 E_y)=13
 \]
 and then $h^0(7G)=p_g(Y) \ge 12$.
 
Now consider the exact sequence 
\begin{equation*}
0\to H^0(6G)\to H^0(7G)\to H^0(\OO_G(7y)),
\end{equation*}
with $G\in |G|$ a general curve. The vector spaces have respective dimensions $8$, $\ge 12$ and $4$, and then  the restriction map $H^0(7G)\to H^0(\OO_G(7y))$ is surjective. Therefore $y$ is not a base point of $|K_Y|$, and then Proposition \ref{prop: general} forces $h^0(5G)\ge 7$, contradicting $h^0(5G)= 6$.  
 \end{proof}

\bigskip

\bigskip

\begin{minipage}{13.0cm}

\parbox[t]{5.5cm}{Margarida Mendes Lopes\\ Centro de An\'alise Matem\'atica, Geometria e Sistemas Din\^amicos,\\
Departamento de  Matem\'atica\\
Instituto Superior T\'ecnico\\
Universidade de Lisboa\\
Av.~Rovisco Pais, 1\\
1049-001 Lisboa, Portugal\\
mmendeslopes@tecnico.ulisboa.pt
} \hfill
\parbox[t]{5.5cm}{Rita Pardini\\
Dipartimento di Matematica\\
Universit\`a di Pisa\\
Largo B. Pontecorvo, 5\\
56127 Pisa, Italy\\
rita.pardini@unipi.it}
\end{minipage}

\vskip1.0truecm

\parbox[t]{6.5cm}{Roberto Pignatelli\\
Dipartimento di Matematica\\
Universit\`a di Trento\\
via Sommarive 14\\
38123 Trento, Italy\\
roberto.pignatelli@unitn.it
 }

\end{document}